\newcommand{\R}{{\mathbb R}}
\newcommand{\beq}{\begin{equation}}
\newcommand{\eeq}{\end{equation}}
\newcommand{\ben}{\begin{eqnarray}}
\newcommand{\een}{\end{eqnarray}}
\newcommand{\beno}{\begin{eqnarray*}}
\newcommand{\eeno}{\end{eqnarray*}}
\newtheorem{thm}{Theorem}[section]
\newtheorem{lem}[thm]{Lemma}
\newtheorem{prop}[thm]{Proposition}
\newtheorem{coro}[thm]{Corollary}
\newtheorem{rmk}[thm]{Remark}
\begin{document}

\title[]{On the uniqueness of solutions of an nonlocal elliptic system}

\author[K. Wang]{ Kelei Wang}
 \address{\noindent K. Wang-
 Wuhan Institute of Physics and Mathematics,
The Chinese Academy of Sciences, Wuhan 430071, China.}
\email{wangkelei@wipm.ac.cn}

\author[J. Wei]{Juncheng Wei}
\address{\noindent J. Wei -Department of Mathematics, University of British
Columbia, Vancouver, B.C., Canada, V6T 1Z2 and
Department Of Mathematics, Chinese University Of Hong Kong, Shatin,
Hong Kong }
\email{jcwei@math.ubc.ca}

\begin{abstract}
We consider the following elliptic system with fractional laplacian
$$ -(-\Delta)^su=uv^2,\ \ -(-\Delta)^sv=vu^2,\ \ u,v>0 \ \mbox{on}\ \R^n,$$
where $s\in(0,1)$ and $(-\Delta)^s$ is the $s$-Lapalcian.  We  first prove that all positive solutions must have polynomial bound. Then we use the Almgren monotonicity formula to perform a blown-down analysis to $s$-harmonic functions. Finally we use the method of moving planes to prove the uniqueness of the one dimensional profile, up to translation and scaling.

\end{abstract}

\keywords{Yau's gradient estimate, Almgren's monotonicity formula, blown-down analysis, uniqueness}

\subjclass{35B45}

\maketitle

\date{}

\section{Introduction and main results}
\setcounter{equation}{0}

In this paper we prove the uniqueness of the positive solutions $(u, v)$, up to scaling and translations, of the following nonlocal elliptic system
\begin{equation}
\label{1p}
-(-\Delta)^s u=uv^2, -(-\Delta)^s v=  vu^2,\ u, v>0 \ \mbox{in} \ \R^1
\end{equation}
where $(-\Delta)^s$ is the $s$-laplacian with $0<s<1$.

When $s=1$, problem (\ref{1p}) arises as limiting equation in the study of phase separations in Bose-Einstein system and also in the Lotka-Volterra competition systems. More precisely, we consider the classical two-component  Lotka-Volterra competition  systems
\begin{equation}
\label{1n}
\left\{\begin{array}{l}
 -\Delta u + \beta_1 u^3 + \beta v^2 u = \lambda_1 u \ \ \text{in }\ \Omega, \\
 -\Delta v +\beta_2 v^3 + \beta u^2 v = \lambda_2 v  \ \ \text{in }\ \Omega, \\
 u>0,\quad v>0  \  \ \text{in } \ \Omega, \\
 u=0,\quad v=0 \ \  \text{on }\ \partial\Omega\,, \\
\int_\Omega u^2=N_1,\quad\int_\Omega v^2=N_2\, ,
\end{array}
\right.
\end{equation}
where $\beta_1, \beta_2, \beta >0$ and $\Omega$ is a bounded smooth domain  in $\R^n$. Solutions of (\ref{1n})
can be regarded as critical points of the energy  functional
\begin{equation}\label{5.1n}
E_\beta (u,v)=\int_\Omega\,\left(|\nabla u|^2+|\nabla
v|^2\right)+\frac{\beta_1}{2}u^4+\frac{\beta_2}{2}v^4+\frac{\beta}{2}
u^2v^2\,,\end{equation} on the space $(u,v)\in H^1_0(\Omega)\times
H^1_0(\Omega)$ with  constraints
\begin{equation}
\label{302n}
\int_\Omega u^2 dx=N_1, \int_\Omega v^2 dx=N_2.
\end{equation}

Of particular interest is the asymptotic behavior of family of bounded energy solutions $(u_\beta, v_\beta)$  in the case of {\em strong competition}, i.e., when  $\beta \to +\infty$, which  produces  spatial segregation in the limiting profiles.  After suitable scaling and blowing up process, (see Berestycki-Lin-Wei-Zhao \cite{blwz} and Noris-Tavares-Terracini-Verzini  \cite{NTTV}),
 we arrive at the following  nonlinear
elliptic system
\begin{equation}\label{maineqn}
\Delta u= u v^2\,, \quad \Delta v= v u^2\,, \quad u,
v > 0 \quad \mbox{in} \quad \R^n\,.
\end{equation}

Recently there have been intense studies on the elliptic system (\ref{maineqn}). In \cite{blwz, BTWW}  the relationship between system (\ref{maineqn}) and the
celebrated Allen-Cahn equation is emphasized. A De Giorgi's-type
and a Gibbons'-type conjecture for the solutions of (\ref{maineqn}) are formulated. Now we recall the following results for the system (\ref{maineqn}).

\medskip

\noindent
(1) When $n=1$, it has been proved that the one-dimensional profile, having linear growth, is reflectionally symmetric, i.e., there exists $x_0$ such that $ u(x-x_0)= v(x_0-x)$, and is  unique, up to translation and scaling (Berestycki-Terracini-Wang-Wei \cite{BTWW}). Furthermore this solution is nondegenerate and stable (Berestycki-Lin-Wei-Zhao \cite{blwz}).

\noindent
(2) When $n\geq 2$, all sublinear growth solutions are trivial (Noris-Tavares-Terracini-Verzini  \cite{NTTV}). Furthermore, Almgren's and Alt-Caffarelli-Friedman monotonicity formulas are derived (Noris-Tavares-Terracini-Verzini  \cite{NTTV}).

\noindent
(3) When $n=2$, the monotonic solution, i.e. $(u, v)$ satisfies
\begin{equation}
\label{mon}
\frac{\partial u}{\partial x_n}>0, \ \ \ \ \frac{\partial v}{\partial x_n} <0
\end{equation}
must be one-dimensional (Berestycki-Lin-Wei-Zhao \cite{blwz}), provided that $(u, v)$ has the following linear growth
\begin{equation}
\label{linear}
u(x)+ v(x) \leq C (1+|x|)
\end{equation}

\noindent
Same conclusion holds if we consider stable solutions (Berestycki-Terracini-Wang-Wei \cite{BTWW}). It has also been proved by Farina \cite{F} that the conditions (\ref{mon})-(\ref{linear}) can be reduced to
\begin{equation}
\label{mon1}
\frac{\partial u}{\partial x_n} >0
\end{equation}
and
\begin{equation}
\label{linear}
u(x)+ v(x) \leq C (1+|x|)^d, \ \mbox{for some positive integer} \ d.
\end{equation}
\noindent
The Gibbon's conjecture has also been solved under the polynomial growth condition (\ref{linear}) (Farina-Soave \cite{FS}).

\medskip

\noindent
(4) In $\R^2$,  for each positive integer $d$ there are solutions to (\ref{maineqn})  with polynomial growth of degree $d$ (Berestycki-Terracini-Wang-Wei \cite{BTWW}). Moreover there are solutions in $\R^2$ which are periodic in one direction and have exponential growth in another direction (\cite{SZ}).

\medskip

\noindent
(5) In two papers of the first author \cite{W1, W2}, it is proved that any solution of \eqref{maineqn} with linear growth is one dimensional.

\medskip

In \cite{TVZ}-\cite{TVZ 2}, Terracini, Verzini and Zillo initiated the study of competition-diffusion nonlinear systems involving {\em fractional Lapalcian} of the form
\begin{equation}
\label{new1}
\left\{\begin{array}{l}
(-\Delta)^s u_i= f_{i, \beta} (u_i)- \beta u_i \sum_{j \not = i} a_{ij} u_j^2, i=1,..., k\\
u_i \in H^s (\R^n)
\end{array}
\right.
\end{equation}
where $n\geq 1$, $ a_{ij}=a_{ji}$, $\beta$ is positive and large, and the fractional Lapalcian $(-\Delta)^s$  is defined as
$$ (-\Delta)^s u(x)= c_{n, s} \mbox{pv} \int_{\R^n} \frac{ u(x)-u(y)}{|x-y|^{n+2s}} dy. $$

It is well known that fractional diffusion arises when the Gaussian statistics of the classical Brownian motion is replaced by a different one, allowing for the L\'{e}vy jumps (or flights). The operator $(-\Delta)^s$ can be seen as the infinitesimal generators of L\'{e}vy stable diffusion process (Applebaum \cite{apple}). This operator arises in several areas such as physics, biology and  finance. In particular in population dynamics while the standard lapacian seems well suited to describe the diffusion of predators in presence of an abundant prey, when the prey is sparse observations suggest that fractional Lapalcians give a more accurate model (Humphries \cite{Hum}). Mathematically (\ref{new1}) is a more challenging problem because the operator is of the nonlocal nature.

In \cite{TVZ, TVZ 2, VZ}, they derived the corresponding Almgren's and Alt-Caffarelli-Friedman's monotonicity formula and proved that the bounded energy solutions have uniform H\"{o}lder regularity with small H\"{o}lder exponent $ \alpha =\alpha (N, s)$. As in the standard diffusion case,  a key result to prove is to show that there are no entire solutions to the blown-up limit system
\begin{equation}\label{equation}
-(-\Delta)^su=uv^2,\ \ -(-\Delta)^sv=vu^2,\ \ u,v>0 \ \mbox{in}\ \R^n,
\end{equation}
with small H\"{o}lder continuous exponent.

\medskip

In this paper, we study some basic qualitative behavior of solutions to (\ref{equation}), including

\medskip

\noindent
(a) are all one-dimensional solutions unique, up to translation and scaling?

\noindent
(b) do all solutions have polynomial bounds?

\medskip

We shall answer both questions  affirmatively. To state our results, we consider the Caffarelli-Silvestre extension of (\ref{equation}). Letting $a:=1-2s\in(-1,1)$,  as in \cite{C-S}, we introduce the elliptic operator
\[L_av:=\mbox{div}\left(y^a\nabla v\right),\]
for functions defined on the upper half plane $\R^2_+$. Define
\[\partial_y^a:=\lim_{y\to0^+}y^a\frac{\partial v}{\partial y}.\]

The problem \eqref{equation} is equivalent to the following extension problem
\begin{equation}\label{equation extension}
\left\{
\begin{aligned}
 &L_a u=L_a v=0, \ \mbox{in}\ \R^{n+1}_+, \\
 &\partial_y^a u=uv^2,\ \ \partial^a_y v=vu^2 \ \ \mbox{on}\ \partial\R^{n+1}_+.
                          \end{aligned} \right.
\end{equation}
Indeed, solutions of this extension problem can be seen as solutions of  \eqref{new1} in the viscosity sense.

Throughout this paper, we take the following notations. $z=(x,y)$ denotes a point in $\R^{n+1}_+$ where $x\in\R^n$ and $y\in\R_+$. In polar coordinates, $y=r\sin\theta$ where $\theta\in[0,\pi/2]$. When $n=1$, we also use the notation $z=x+iy=(r\cos\theta,r\sin\theta)$. The half ball $B_r^+(z_0)=B_r(z_0)\cap\R^{n+1}_+$, the positive part of its boundary $\partial^+B_r^+(z_0)=\partial B_r(z_0)\cap\R^{n+1}_+$ and the flat part $\partial^0B_r^+(z_0)=\partial B_r^+(z_0)\setminus\partial^+B_r^+$. Moreover, if the center of ball is the origin $0$, it will be omitted.

Note that the problem \eqref{equation extension} is invariant under the scaling $(u(z),v(z))\mapsto (\lambda^s u(\lambda z),\lambda^s v(\lambda z)$ and translations in $\R^n$ directions.

Our first main result is

\begin{thm}\label{main result}
When $n=1$ and $s\in(1/4,1)    $, the solution $(u,v)$ of \eqref{equation extension} is unique up to a scaling and translation in the $x$-direction. In particular, there exists a constant $T$ such that
\[u(x,y)=v(2T-x,y), \ \ \mbox{in}\ \R^2.\]
\end{thm}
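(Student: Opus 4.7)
I would follow the three-step strategy of the local case treated in \cite{BTWW}, now transported to the Caffarelli-Silvestre extension: polynomial bound $\Rightarrow$ Almgren blow-down at infinity $\Rightarrow$ moving planes. The polynomial bound and an Almgren-type monotonicity formula for the extension system are supplied by the earlier sections of the paper, so I take them for granted.

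\textbf{Step 1 (blow-down classification).} Using the polynomial bound $u+v\le C(1+|z|)^d$ together with the monotonicity of the frequency
\[
N(r)=\frac{r\left[\int_{B_r^+}y^a(|\nabla u|^2+|\nabla v|^2)\,dz+2\int_{\partial^0 B_r^+}u^2v^2\,dx\right]}{\int_{\partial^+B_r^+}y^a(u^2+v^2)\,d\sigma},
\]
the limit $\gamma_\infty:=\lim_{r\to\infty}N(r)$ exists and is finite. Rescaling by $L(R):=\sup_{B_R^+}(u+v)$, I obtain a subsequential blow-down $(U_\infty,V_\infty)$ that is $\gamma_\infty$-homogeneous, $L_a$-harmonic in $\R^2_+$, segregated on $\partial\R^2_+$, and satisfies $\partial_y^a U_\infty=0$ on $\{U_\infty>0\}$ (analogously for $V_\infty$). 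In $n=1$ this pair is rigid: up to the reflection $x\mapsto -x$,
\[
U_\infty(x,y)=c_1\,\Phi(x,y),\qquad V_\infty(x,y)=c_2\,\Phi(-x,y),
\]
where $\Phi$ is the unique (up to a positive constant) homogeneous $L_a$-harmonic function of degree $\gamma_\infty$ on $\R^2_+$ whose trace vanishes on $\{x\le 0\}$. I expect the condition $s>1/4$ to be invoked here precisely to exclude the higher admissible exponents $\gamma_\infty=s+k$, $k=1,2,\ldots$, via a sharper lower bound in the Almgren identity, forcing $\gamma_\infty=s$. The self-symmetry $(u,v)\leftrightarrow(v,u)$ of \eqref{equation extension}, combined with the rigidity of $\Phi$, then yields $c_1=c_2=c$, and after a translation I can write the sharp asymptotics
\[
u(x,y)\sim c\,\Phi(x-T,y),\qquad v(x,y)\sim c\,\Phi(T-x,y)\qquad\text{as }|z|\to\infty,
\]
for some $T\in\R$.

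\textbf{Step 2 (moving planes).} For each $\lambda\in\R$ set $\tilde u_\lambda(x,y):=v(2\lambda-x,y)$ and $\tilde v_\lambda(x,y):=u(2\lambda-x,y)$; then $(\tilde u_\lambda,\tilde v_\lambda)$ again solves \eqref{equation extension}. Let $w_1:=u-\tilde u_\lambda$ and $w_2:=v-\tilde v_\lambda$ on $\Sigma_\lambda:=\{x>\lambda\}\cap\overline{\R^2_+}$. The asymptotics from Step~1 force $w_1,w_2\le 0$ near infinity in $\Sigma_\lambda$ for every $\lambda\gg 0$. A comparison principle for the coupled linearised system, combined with a Hopf-type boundary lemma for the weighted operator $L_a$ applied across the corner $\{x=\lambda,\,y=0\}$, then propagates the inequality to all of $\Sigma_\lambda$. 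Sliding $\lambda$ downward, the critical value $\lambda_0:=\inf\{\lambda:\ w_1,w_2\le 0\ \text{on}\ \Sigma_\lambda\}$ is pinned by the asymptotics at $\lambda_0=T$, and the strong maximum principle at $\lambda_0$ collapses $w_1\equiv w_2\equiv 0$, giving $u(x,y)=v(2T-x,y)$. With reflection symmetry in hand, the only remaining degrees of freedom are the interface $T$ and the common prefactor $c$, which are exactly the translation and scaling invariances of the problem; hence uniqueness.

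\textbf{Main obstacle.} The delicate step is Step~2. Because the coupling enters through the nonlinear Neumann-type condition $\partial_y^a u=uv^2$ on the boundary, the usual comparison / Hopf machinery must be rebuilt for a coupled system on a degenerate-weighted half-plane with boundary coupling and a corner at the moving plane. A secondary but essential subtlety is the equality $c_1=c_2$ of the asymptotic prefactors at $\pm\infty$: without it the moving-planes procedure could not terminate at a finite geometric value of $\lambda$, so Step~1 must produce this equality (for which I would lean on the $(u,v)\leftrightarrow(v,u)$ symmetry together with the uniqueness of $\Phi$). I expect the restriction $s>1/4$ to be used exactly at these two points---securing the Almgren lower bound that pins $\gamma_\infty=s$, and validating the Hopf boundary argument at the corner.
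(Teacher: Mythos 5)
Your overall strategy matches the paper's (polynomial bound, Almgren blow-down, moving planes), but there is a concrete gap in what you extract from the blow-down, and your stated guess for the role of $s>1/4$ is wrong on both counts.

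The blow-down only produces a \emph{leading-order}, $s$-homogeneous profile. Writing it as $u\sim c\,\Phi(x-T,y)$ misleads: since $\Phi$ is homogeneous of degree $s$, a translation by $T$ does not change its leading asymptotics, so the parameter $T$ is \emph{not visible} at leading order and is not determined by the blow-down. Consequently the moving-planes scheme you sketch cannot start (Lemma \ref{lem 8.1}) or terminate: once you translate one solution by $t$, both $u^t$ and $u_2$ have the \emph{same} leading-order expansion $r^s(\cos\frac{\theta}{2})^{2s}$, and the leading order gives no information about which dominates near infinity. What is actually needed is the second-order expansion of Proposition \ref{convergence rate improved},
\[
u(r,\theta)=r^s\Bigl(\cos\tfrac{\theta}{2}\Bigr)^{2s}+a\,r^{s-1}\Bigl(\cos\tfrac{\theta}{2}\Bigr)^{2s}+o(r^{s-1}),
\]
because under translation the coefficient $a$ shifts to $a+st$, so the next-order term is exactly what drives and pins the sliding. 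This is also where $s>1/4$ enters: the first-order error decay is $O(r^{s-\sigma})$ with $\sigma=\min\{1,4s\}$ (Proposition \ref{convergence rate}), and one needs $\sigma=1$, i.e.\ $4s>1$, so that the $r^{s-1}$ coefficient can be isolated. It has nothing to do with excluding higher frequencies $\gamma_\infty=s+k$ (the paper pins $\gamma_\infty\in\{s,2s-1\}$ and rules out the self-segregated case $2s-1$ for all $s\in(0,1)$ in Section~5.1), nor with a Hopf lemma at the corner.

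Secondary point: the equality of the two prefactors in the blow-down (your $c_1=c_2$, the paper's $a_+=a_-$ in Lemma \ref{lem limit equation 2}) is proved via the translated Pohozaev identity \eqref{Pohozaev limit}, not by invoking the $(u,v)\leftrightarrow(v,u)$ symmetry of the system. The system symmetry only tells you that $(v,u)$ is another solution; it does not a priori constrain the two prefactors of a single solution's blow-down, so your appeal to it is circular with the reflectional symmetry you are trying to prove. The paper gets around this by first proving the two-solution comparison Theorem \ref{lem uniqueness} under the hypothesis $a_1+b_1=a_2+b_2$, then applying it to the pair $(u,v)$ and $(v(-x,y),u(-x,y))$ to deduce the reflection symmetry, and only then returning to general pairs. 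Your version conflates these two passes.

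So: the skeleton is the right one, the moving-planes setup is correct in spirit, but the missing ingredient is the whole refined-asymptotics step (Sections~6--7 of the paper), and without it the sliding argument does not close.
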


Next we shall prove

\begin{thm}\label{main result2}
When $n\geq 1, s\in (0,1)$, the solution $(u,v)$ of \eqref{equation extension} must have at most polynomial growth: there exists $d>0$ such that
\begin{equation}
u(z)+ v(z) \leq C (1+|x|+|y|)^d.
\end{equation}
\end{thm}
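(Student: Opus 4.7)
My plan is to establish a Yau-type gradient estimate for $\phi := \log(u+v)$ and integrate it to obtain polynomial growth. The idea is that $u+v$ behaves like a positive $L_a$-harmonic function whose boundary source is manageable, so the classical ``positive solutions of a linear elliptic equation have bounded $|\nabla\log|$ on balls'' principle should apply in a suitable form, yielding $|\nabla\phi|(z)\lesssim 1/|z|$ at infinity and hence polynomial growth after integration.

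\textbf{Set-up.} Put $w:=u+v>0$ and $\phi:=\log w$. Since $L_au=L_av=0$ in $\mathbb{R}^{n+1}_+$, we have $L_aw=0$ in $\mathbb{R}^{n+1}_+$, and on $\partial^0\mathbb{R}^{n+1}_+$
\[
\partial_y^a w \;=\; uv^2+vu^2 \;=\; uv\,w \;\ge 0.
\]
A direct computation (using $L_aw=0$) gives in the interior
\[
L_a\phi \;=\; -\,y^a|\nabla\phi|^2,
\]
while on the boundary $\partial_y^a\phi = uv\ge 0$. So $\phi$ satisfies a degenerate semilinear equation with a nonnegative nonlinear Neumann-type source which is controlled pointwise by $uv\le w^2/4$.

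\textbf{Main step: Yau-type gradient bound.} The core of the proof is to show that for some constants $R_0,C$ depending only on $n$ and $s$,
\[
|\nabla\phi|(z) \;\le\; \frac{C}{|z|}, \qquad |z|\ge R_0.
\]
To achieve this I would differentiate the identity $L_a\phi=-y^a|\nabla\phi|^2$ and use the weighted Bochner formula for the operator $L_a$ to deduce a differential inequality for $Q:=|\nabla\phi|^2$ of the schematic form
\[
L_aQ \;\ge\; c\,y^a|\nabla^2\phi|^2 \;-\; C\,y^a\bigl(Q^2+|\nabla Q|\bigr)
\]
in the interior, together with a trace inequality controlling $\partial_y^a Q$ by the boundary value of $Q$ and by $(uv)^2=(\partial_y^a\phi)^2$. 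A Moser-type iteration applied to $Q\,\eta^2$, with $\eta$ a smooth cutoff supported in a half-ball $B_R^+$ and equal to $1$ on $B_{R/2}^+$, then yields $\sup_{B_{R/2}^+}|\nabla\phi|^2\le C/R^2$, from which the pointwise decay follows by localizing at an arbitrary point $z$ with $R\sim|z|$.

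\textbf{Conclusion and main obstacle.} Once the gradient bound is established, pick $z_0$ on the unit half-sphere and join it to an arbitrary $z$ with $|z|\ge R_0$ by a path $\gamma$ lying in $\{|z'|\ge 1\}\cap\mathbb{R}^{n+1}_+$; integrating gives
\[
\phi(z)-\phi(z_0) \;\le\; \int_{\gamma}|\nabla\phi|\,ds \;\le\; \int_1^{|z|}\frac{C}{t}\,dt \;=\; C\log|z|,
\]
so $w(z)\le C'(1+|z|)^C$, and since $0<u,v\le w$ the bound transfers to $u$ and $v$ separately. The main obstacle is the nonlinear boundary term in the Bochner-Moser iteration: $\partial_y^a\phi=uv$ is not small a priori, and must be controlled via a weighted trace inequality for $y^a$ together with the pointwise bound $uv\le w^2/4=(e^\phi)^2/4$. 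Handling this term so that the boundary integrals can be absorbed into the interior energy (and in particular, extracting the $1/R$ scaling rather than only boundedness of $|\nabla\phi|$) is the technically delicate point, and the argument must also deal with the degeneracy of the weight $y^a$ at $\{y=0\}$ where boundary quantities live.
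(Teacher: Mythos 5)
Your plan starts from the right idea — Yau's gradient estimate for positive $L_a$-harmonic functions, which is exactly the paper's first tool (Theorem \ref{Yau's gradient estimate}) — but it diverges at the crucial point and leaves a genuine gap: the half-space boundary.

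The Yau estimate that one actually proves for a positive solution of $L_aw=0$ in $\R^{n+1}_+$ is $|\nabla\log w|\le C/y$, \emph{not} $C/|z|$. The constant degenerates as $y\to 0$ because the interior Bochner/maximum-principle argument requires the test ball $B_R(z_0)$ to sit compactly inside $\R^{n+1}_+$, forcing $R\lesssim y_0$. Integrating $C/y$ along hyperbolic geodesics, as the paper does, gives the polynomial bound only on the set $\{y\ge 1/2\}$; it says nothing a priori about $u(x,0)$. Your proposal, by contrast, asserts a bound $|\nabla\phi|\le C/|z|$ all the way to the boundary and attempts to obtain it by incorporating the boundary condition $\partial_y^a\phi=uv$ into a Bochner–Moser iteration on half-balls. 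This is where the argument breaks down. The boundary source $uv$ is not a priori small: since $u$ and $v$ have at this stage no growth control (that is precisely the theorem being proved), $uv$ could grow polynomially or worse on $\partial\R^{n+1}_+$, and there is no trace inequality that lets you absorb a potentially unbounded boundary term into the interior Dirichlet energy so as to extract the claimed $1/R$ scaling. Put differently, your "technically delicate point" is not a technicality — it is the whole content of the theorem near $\{y=0\}$, and the sketch gives no mechanism to handle it.

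The paper resolves this by a completely different second step that your proposal does not contain. After obtaining $u(x,y)\le C(1+|x|^2+y^2)^C$ on $\{y\ge 1/2\}$ from the hyperbolic-geodesic integration, it introduces the auxiliary family of Poisson kernels $P^t$ for the shifted operator $\Delta+a(y+t)^{-1}\partial_y$, establishes the production rule $P^0(x,y+t)=\int P^t(x-\xi,y)P^0(\xi,t)\,d\xi$, shows by a Fourier/ODE argument that $P^t(0,1-t)\ge 1/C$ uniformly in $t\in[0,1/2]$, and then applies the Yau estimate to $P^t$ itself (which is a positive solution of the same type of equation) to get a polynomial lower bound for $P^t(x,1-t)$. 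Feeding this into the Poisson representation $u(0,1)\ge\int_{\R^n}P^t(-x,1-t)u(x,t)\,dx$ gives a weighted $L^1$ bound on $u(\cdot,t)$ for $t\in(0,1/2)$, which, after a co-area selection of a sphere and the mean-value inequality for the even $L_a$-subharmonic extension (Lemma \ref{lem sup bound}), yields the pointwise polynomial bound near $\{y=0\}$. This Poisson-kernel step is exactly what replaces the missing $C/|z|$ gradient bound, and it is absent from your proposal. Without it (or some substitute controlling the boundary region), the argument does not close.

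One smaller remark: working with $w=u+v$ is fine ($w$ is $L_a$-harmonic with $\partial_y^a w\ge 0$), but the paper's formulation applies the lemma to $u$ and $v$ separately; either choice is compatible with the interior part of the argument.
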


\medskip

Let us put our results in broader context. The uniqueness for fractional nonlinear elliptic equations is a very challenging problem. The only results known in this direction are due to Frank-Lenzmann \cite{FL} and Frank-Lenzmann-Silvestre \cite{FLS}, in which they proved the nondegeneracy and uniqueness of radial ground states for the following fractional  nonlinear Schr\"{o}dinger equation
\begin{equation}
- (-\Delta)^s Q- Q+ Q^p=0,\  Q>0, \ Q\in H^s (\R^n).
\end{equation}
\noindent
Our proof of Theorem \ref{main result} is completely different from theirs: we make use of the method of moving planes (as in \cite{BTWW}) to prove uniqueness. To apply the method moving plane, we have to know precise asymptotics of the solutions up to high orders. This is achieved by blown-down analysis and Fourier mode expansions. (The condition that $ s>\frac{1}{4}$ seems to be technical only.)  In dealing with nonlocal equations some "trivial" facts can become quite nontrivial. For example, one of "trivial" question is whether or not one dimensional profile has linear growth. (When $s=1$ this is a trivial consequence of Hamiltonian identity. See \cite{blwz}.) To prove this for the fractional laplacian case we employ Yau's gradient estimates. A surprising result is that this also gives the polynomial bound for all solutions (Theorem \ref{main result2}). This is in sharp contrast with $s=1$ case since there are exponential growth solutions (\cite{SZ}).

\medskip

The rest of the paper is organized as follows: In Section 2 we prove Yau's estimates for
$s-$subharmonic functions from which we prove Theorem 1.2. Sections 3 and 4 contain the
Almgren's monotonicity formula and the blown-down process to $s-$harmonic functions. We
prove Theorem \ref{main result} in Sections 5-8: we first classify the blown-down limit when $n = 1$ (Section 5). Then we prove the growth bound and decay estimates (Section 6). In order to apply the
method moving planes we need to obtain refined asymptotics (Section 7). Finally we apply
the method of moving planes to prove the uniqueness result. We list some basic facts about
$s-$harmonic functions in the appendix.

\section{Gradient estimate for positive $L_a$-harmonic functions and Proof of Theorem \ref{main result2}}
\numberwithin{equation}{section}
 \setcounter{equation}{0}

In this section we prove the following Yau's type gradient estimate (cf. \cite{SY}) for positive $L_a$-harmonic functions and use it to give a polynomial bound for solutions of \eqref{equation extension}. Regarding Yau's estimates
for harmonic functions on manifolds, we refer to the book by Schoen-Yau \cite{SY}.

\begin{thm}\label{Yau's gradient estimate}
Let $u$ be a positive $L_a$-harmonic function in $\R^{n+1}_+$.
There exists a constant $C(n)$ such that
\[\frac{|\nabla u(x,y)|}{u(x,y)}\leq \frac{C(n)}{y},\ \ \ \mbox{in}\ \R^{n+1}_+.\]
\end{thm}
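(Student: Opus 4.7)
I would adapt Yau's classical gradient estimate to the weighted operator $L_a$. Since $y>0$, $L_a u = 0$ is equivalent to the non-divergence form equation $\tilde L_a u := \Delta u + (a/y) u_y = 0$. Setting $w = \log u$, a direct computation from $\Delta w = \Delta u/u - |\nabla w|^2$ gives $\tilde L_a w = -|\nabla w|^2$.

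The next step is a Bochner-type identity for $Q := |\nabla w|^2$. Applying $\tilde L_a$ and using $\Delta w = -Q - (a/y) w_y$, one obtains
\[
\tilde L_a Q + 2\, \nabla w \cdot \nabla Q \;=\; 2 |\nabla^2 w|^2 + \frac{2a}{y^2} w_y^2,
\]
which is the Bakry--Emery Bochner identity associated to the weight $\phi = -a\log y$. Combining with the Cauchy--Schwarz bound $|\nabla^2 w|^2 \ge (\Delta w)^2/(n+1) = (Q + (a/y) w_y)^2/(n+1)$ and the trivial inequality $w_y^2 \le Q$ produces a differential inequality of the form
\[
\tilde L_a Q + 2\, \nabla w \cdot \nabla Q \;\ge\; \frac{2}{n+1} Q^2 - \frac{C(n,a)}{y^2} Q.
\]

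To close the argument I would localize via a cutoff. Fix $z_0 = (x_0, y_0) \in \R^{n+1}_+$, set $R = y_0/2$ so that $B_R(z_0) \Subset \R^{n+1}_+$, and choose $\eta \in C_c^\infty(B_R(z_0))$ with $\eta \equiv 1$ on $B_{R/2}(z_0)$, $|\nabla \eta| \le C/R$ and $|\tilde L_a \eta| \le C/R^2$ (the drift $a/y$ is bounded on $B_R(z_0)$ since $y \in [y_0/2, 3y_0/2]$ there). At an interior maximum $\bar z$ of $F := \eta^2 Q$, the conditions $\nabla F(\bar z) = 0$ and $\tilde L_a F(\bar z) \le 0$, expanded using $\tilde L_a(\eta^2 Q) = \eta^2 \tilde L_a Q + 2 \eta Q\, \tilde L_a \eta + 2 |\nabla \eta|^2 Q + 4 \eta\, \nabla \eta \cdot \nabla Q$, combine with the Bochner inequality above to yield an algebraic inequality quadratic in $F(\bar z)$ from which $F(\bar z) \le C(n,a)$. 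Evaluating at $z_0$, where $\eta(z_0)=1$, we conclude $|\nabla \log u(z_0)|^2 = Q(z_0) \le C(n,a)/y_0^2$, and since $z_0$ was arbitrary the theorem follows.

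The principal obstacle is the unsigned term $(2a/y^2) w_y^2$ in the Bochner identity when $a<0$ (i.e.\ $s>1/2$): it cannot be discarded and must be absorbed into the quadratic term $(2/(n+1)) Q^2$ via Young's inequality. This absorption succeeds because on the support of $\eta$ we have $1/y^2 \lesssim 1/R^2$, so the bad contribution is of order $Q/R^2$, which is genuinely lower order than $Q^2$ in the target regime $Q \sim R^{-2}$, at the cost only of worse dimensional constants. A cleaner alternative route, which I would at least mention, is to use the scale invariance $u(z) \mapsto u(\lambda z)$ of $L_a$ to reduce the problem to positive solutions on the fixed ball $B_{1/2}((0,1))$, where $L_a$ is uniformly elliptic with smooth coefficients, and then invoke the standard Harnack inequality together with interior gradient estimates; I would nevertheless follow the Bochner route to stay parallel with the Schoen--Yau reference.
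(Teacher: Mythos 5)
Your proposal follows essentially the same route as the paper: set $w=\log u$, derive the weighted Bochner identity $\tfrac12 L_a Q$ (equivalently $\tilde L_a Q + 2\nabla w\cdot\nabla Q = 2|\nabla^2 w|^2 + \tfrac{2a}{y^2}w_y^2$), bound $|\nabla^2 w|^2$ below via $\frac{1}{n+1}(\Delta w)^2$ using the equation, localize on a ball of radius comparable to $y_0$, and absorb the sign-indefinite $a$-term by Young; the paper's only cosmetic differences are $R=y_0/3$ and a higher cutoff power $\eta=\varphi^{2m}$, $m\ge 3$, which serve the same purpose as your $\eta^2$ cutoff. Your remark that the estimate also follows from the $z\mapsto(x_0+y_0 z', y_0 z_{n+1})$ rescaling plus interior Harnack/gradient estimates on $B_{1/2}((0,1))$ is correct, and is indeed the quicker route.
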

\begin{proof}
Let $v:=\log u$, which satisfies
\begin{equation}\label{log u}
-\Delta v=|\nabla v|^2+ay^{-1}\frac{\partial v}{\partial y}.
\end{equation}
By a direct calculation we have
\begin{equation}\label{gradient log u}
\frac{1}{2}\Delta |\nabla v|^2=|\nabla^2v|^2-\nabla|\nabla v|^2\cdot\nabla v-\frac{a}{2y}\frac{\partial}{\partial y}|\nabla v|^2
+\frac{a}{y^2}\Big|\frac{\partial v}{\partial y}\Big|^2.
\end{equation}

For any $z_0=(x_0,y_0)\in \R^{n+1}_+$, let $R=y_0/3$. Take a nonnegative function $\eta\in C_0^\infty(B_{2R}(z_0))$ and let $w:=|\nabla v|^2\eta$. Since $w$ vanishes on $\partial B_{2R}(z_0)$, it attains its maximum at an interior point, say $z_1$.

At $z_1$,
\begin{equation}\label{first order condition}
0=\nabla w=\eta\nabla|\nabla v|^2+|\nabla v|^2\nabla\eta,
\end{equation}
\begin{equation}\label{second order condition}
0\geq\Delta w=\eta\Delta|\nabla v|^2+2\nabla|\nabla v|^2\cdot\nabla\eta+|\nabla v|^2\Delta\eta.
\end{equation}
Substituting \eqref{gradient log u} and \eqref{first order condition} into \eqref{second order condition} leads to
\begin{eqnarray*}
0&\geq&2|\nabla^2v|^2\eta+2|\nabla v|^2\nabla v\cdot\nabla\eta+a y^{-1}|\nabla v|^2\frac{\partial\eta}{\partial y}\\
&&-2|\nabla v|^2\eta^{-1}|\nabla \eta|^2+|\nabla v|^2\Delta\eta.
\end{eqnarray*}
By the Cauchy inequality and \eqref{log u},
\begin{eqnarray*}
|\nabla^2v|^2&\geq&\frac{1}{n+1}\left(\Delta v\right)^2\\
&=&\frac{1}{n+1}\left(|\nabla v|^4+2ay^{-1}|\nabla v|^2\frac{\partial v}{\partial y}+\frac{a^2}{y^2}\Big|\frac{\partial v}{\partial y}\Big|^2\right).
\end{eqnarray*}
Substituting this into the above gives
\begin{eqnarray*}
0&\geq&\frac{2}{n+1}|\nabla v|^4\eta+\frac{4a}{(n+1)y}|\nabla v|^2\frac{\partial v}{\partial y}\eta+\frac{2a^2}{(n+1)y^2}\Big|\frac{\partial v}{\partial y}\Big|^2\eta\\
&&+3|\nabla v|^2\nabla v\cdot\nabla\eta+a y^{-1}|\nabla v|^2\frac{\partial\eta}{\partial y}\\
&&-2|\nabla v|^2\eta^{-1}|\nabla \eta|^2+|\nabla v|^2\Delta\eta.
\end{eqnarray*}
Now take an $\varphi\in C_0^\infty(B_{2R}(z_0))$, satisfying $0\leq\varphi\leq 1$, $\varphi\equiv 1$ in $B_R(z_0)$ and $|\nabla\varphi|^2+|\Delta\varphi|\leq 100R^{-2}$.
Choose an $m\geq 3$ and substitute $\eta=\varphi^{2m}$ into the above, which results in
\begin{eqnarray*}
|\nabla v|^4\varphi^{2m}&\leq&C(n)y^{-1}|\nabla v|^3\varphi^{2m}+C(n)y^{-2}|\nabla v|^2\varphi^{2m}\\
&&+C(n,m)\varphi^{2m-1}|\nabla v|^3|\nabla\varphi|+C(n,m)\varphi^{2m-1} y^{-1}|\nabla v|^2|\nabla\varphi|\\
&&+C(n,m)\varphi^{2m-2}|\nabla v|^2|\nabla \varphi|^2+C(n,m)\varphi^{2m-1}|\Delta\varphi||\nabla v|^2.
\end{eqnarray*}
Applying the Young inequality to the right hand side, we obtain
\begin{eqnarray*}
|\nabla v|^4\varphi^{2m}&\leq&\frac{1}{2}|\nabla v|^4\varphi^{2m}\\
&&+C(n)\left(y^{-4}\varphi^{2m}+\varphi^{2m-4}|\nabla\varphi|^4
+\varphi^{2m-2}y^{-2}|\nabla\varphi|^2+\varphi^{2m-2}|\Delta\varphi|^2\right).
\end{eqnarray*}
By our assumption on $\varphi$, and because $y^{-1}\leq 4R^{-1}$ in $B_{2R}(z_0)$, this gives
\[|\nabla v(z_0)|^4\leq |\nabla v(z_1)|^4\varphi(z_1)^{2m}\leq C(n,m)R^{-4},\]
which clearly implies the bound on $u^{-1}|\nabla u|$.
\end{proof}

A direct consequence of this gradient estimate is a Harnack inequality for positive
$L_a$-harmonic functions.
\begin{coro}\label{coro Harnack inequality}
Let $u$ be a positive $L_a$-harmonic function in $\R^{n+1}_+$. There
exists a constant $C(n)$ such that, for any $(x,y)\in\R^{n+1}_+$,
\[\sup_{B_{y/2}(x,y)}u\leq C\inf_{B_{y/2}(x,y)}u.\]
\end{coro}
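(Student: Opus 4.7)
The plan is to deduce the Harnack inequality directly from the pointwise gradient bound of Theorem \ref{Yau's gradient estimate} by integrating $\nabla \log u$ along line segments, a classical consequence of Yau-type estimates.

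First, fix $(x,y)\in\R^{n+1}_+$ and take any two points $p_1,p_2\in B_{y/2}(x,y)$. Since the ball $B_{y/2}(x,y)$ is contained in $\R^{n+1}_+$ (this is precisely why one chooses the radius $y/2$), the straight line segment $\gamma(t)=(1-t)p_1+tp_2$, $t\in[0,1]$, lies entirely in $B_{y/2}(x,y)\subset\R^{n+1}_+$. Moreover, writing $\gamma(t)=(\xi(t),\eta(t))$, the vertical coordinate satisfies $\eta(t)\geq y-y/2=y/2$ on the whole segment.

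Second, set $v:=\log u$, which is well defined since $u>0$. Theorem \ref{Yau's gradient estimate} applied at every point of $\gamma$ gives
\[
|\nabla v(\gamma(t))|=\frac{|\nabla u(\gamma(t))|}{u(\gamma(t))}\leq \frac{C(n)}{\eta(t)}\leq \frac{2C(n)}{y}.
\]
Integrating along $\gamma$ and using that its length is $|p_1-p_2|\leq y$, we obtain
\[
|\log u(p_1)-\log u(p_2)|\leq \int_0^1 |\nabla v(\gamma(t))|\,|p_2-p_1|\,dt\leq \frac{2C(n)}{y}\cdot y=2C(n).
\]
Exponentiating yields $u(p_1)\leq e^{2C(n)}u(p_2)$, and since $p_1,p_2$ were arbitrary, taking $p_1$ where $u$ is maximized and $p_2$ where $u$ is minimized over $B_{y/2}(x,y)$ gives the desired Harnack inequality with $C=e^{2C(n)}$.

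There is essentially no obstacle here: the only subtle point is to verify that the ball $B_{y/2}(x,y)$ stays inside $\R^{n+1}_+$ so that the gradient estimate is applicable along the connecting segment, and that on this segment the vertical coordinate is bounded below by $y/2$ (so the right-hand side of Theorem \ref{Yau's gradient estimate} is uniformly controlled). Both are immediate from the choice of radius. The rest is a one-line integration.
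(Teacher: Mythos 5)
Your proof is correct and is precisely the "direct consequence" the paper has in mind: the paper omits the details for this corollary, but the integration of $\nabla\log u$ along segments inside $B_{y/2}(x,y)$, using the pointwise bound from Theorem \ref{Yau's gradient estimate} together with the fact that the vertical coordinate stays $\geq y/2$ there, is exactly the intended argument (and is the same device the paper then spells out, in hyperbolic-metric language, in the proof of Theorem \ref{lem polynomial growth}).
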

Iterating this Harnack inequality using chains of balls gives an exponential growth bound on $u$. However, we can get a more precise estimate using the hyperbolic geometry.

Now we come to the proof of Theorem \ref{main result2}. In fact, we have the following polynomial bound for positive $s$-subharmonic function on $\R^n$.
\begin{thm}\label{lem polynomial growth}
Let $u\in C(\overline{\R^{n+1}})$ be a solution of the problem
\begin{equation*}
\left\{
\begin{aligned}
 &L_a u=0, \ \mbox{in}\ \R^{n+1}_+, \\
 &u>0, \ \mbox{on}\ \overline{\R^{n+1}_+}, \\
 &\partial_y^a u\geq 0, \ \ \mbox{on}\ \partial\R^{n+1}_+.
                          \end{aligned} \right.
\end{equation*}
There exists a constant $C$ depending only on the dimension $n$ and $a$ such that,
\[u(x,y)\leq Cu(0,1)\left(1+|x|^2+y^2\right)^C.\]
\end{thm}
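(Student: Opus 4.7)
The plan is to combine the Yau-type gradient estimate of Theorem \ref{Yau's gradient estimate} with the Neumann condition $\partial_y^a u \geq 0$. The estimate $|\nabla \log u| \leq C/y$ in $\R^{n+1}_+$ says exactly that $\log u$ is $C$-Lipschitz with respect to the hyperbolic metric $ds^2 = y^{-2}(|dx|^2 + dy^2)$; equivalently, Corollary \ref{coro Harnack inequality} can be chained. First I would handle the interior region $\{y \geq 1\}$. From $(0,1)$ to a target $(x_0, y_0)$ with $y_0 \geq 1$, take an ``up--across--down'' path through an intermediate altitude $R \asymp \max(|x_0|, y_0)$: the two vertical legs cost $O(\log R)$ Harnack steps each (each multiplying the function by a fixed constant, so together contributing a factor $R^{C'}$), while the horizontal leg uses balls of radius $R/2$ and so costs only $O(|x_0|/R) = O(1)$ steps. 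The net bound is
\[ u(x_0, y_0) \leq C\, u(0, 1) \bigl(1 + |x_0|^2 + y_0^2\bigr)^{C}, \qquad y_0 \geq 1. \]

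To propagate this polynomial bound into the boundary strip $\{0 \leq y < 1\}$, I would use the even reflection $\tilde u(x, y) := u(x, |y|)$. Because $\partial_y^a u \geq 0$, $\tilde u$ is $L_a$-subharmonic on $\R^{n+1}$ in the distributional sense (with the symmetric weight $|y|^a$; the boundary flux contributes a non-negative measure on $\{y = 0\}$ after reflection). For any $(x_0, y_0)$ with $0 \leq y_0 \leq 1$, the weighted sub-mean-value inequality over $B_R(x_0, y_0)$ gives
\[ \tilde u(x_0, y_0) \leq \frac{1}{\int_{B_R(x_0, y_0)} |y|^a \, dz} \int_{B_R(x_0, y_0)} \tilde u(z) \, |y|^a \, dz. \]
Split the integrand into the contributions from $\{|y| \geq 1\}$ (controlled by the interior bound from the previous step) and from the strip $\{|y| < 1\}$ (only controlled by $N(\rho + R) := \sup_{|x| \leq \rho + R,\ 0 \leq y \leq 1} u$). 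Since the weighted measure of the strip inside $B_R$ is $O(R^n)$ while the total weighted measure is $\asymp R^{n+1+a}$, one obtains the recursive estimate
\[ N(\rho) \leq C\, u(0, 1) \bigl(\rho + R\bigr)^{2C} + C\, R^{-(a+1)} N(\rho + R), \]
and iterating with $R$ chosen suitably large closes the bound to $N(\rho) \leq C\, u(0,1)\, \rho^{D}$, which together with the interior bound yields the claimed polynomial growth at every height.

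The main obstacle is closing this bootstrap. Naively iterating pushes $N$ to arguments larger than $\rho$, and the only a priori bound on $N$ is the crude finiteness coming from continuity on compact sets (at best, an exponential bound obtained by chaining interior Harnack inequalities down to $y = 0$). The decay factor $R^{-(a+1)}$, which encodes the Neumann condition through the smallness of the weighted measure of the thin strip relative to the whole ball, is exactly what makes the telescoping iteration converge. A direct maximum-principle argument using only subharmonicity would fail here, because the supremum of $\tilde u$ on $\partial B_R(x_0, 0)$ could a priori sit inside the strip $\{|y| < 1\}$ where no bound is yet available; the quantitative shrinkage from the weight is essential.
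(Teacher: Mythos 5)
Your first step (handling the interior region $\{y\ge 1\}$ by chaining the Harnack inequality, or equivalently by integrating the gradient estimate along hyperbolic geodesics) matches the paper's argument exactly; both derive the bound $u(x,y)\le u(0,1)\bigl(1+\tfrac{|x|^2+(y-1)^2}{2y}\bigr)^C$, which is polynomial as long as $y$ is bounded below.

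The second step, however, has a genuine gap, and you have in fact put your finger on it yourself without resolving it. The recursion
\[
N(\rho)\ \le\ C\,u(0,1)(\rho+R)^{2C}\;+\;C R^{-(a+1)}N(\rho+R)
\]
always transfers control from the \emph{outer} scale $\rho+R$ to the \emph{inner} scale $\rho$, so the iteration never terminates at a region where a quantitative bound is already in hand; it terminates only ``at infinity.'' To conclude you must bound the remainder $\bigl(\prod_j q_j\bigr)\,N(\rho_k)$ as $k\to\infty$, and for that you need some a priori growth rate on $N$. But there is no such rate available: the gradient estimate degenerates like $y^{-C}$ as $y\to 0$ and gives \emph{no} bound at $y=0$, the Harnack chain does not reach the boundary, and the hypothesis $u\in C(\overline{\R^{n+1}_+})$ provides only qualitative finiteness of $N(\rho)$ with no growth rate whatsoever. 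Whether you take $R$ constant (where the decay factor $q=CR^{-(a+1)}<1$ is geometric but $N(\rho+kR)$ is unbounded along the iteration) or $R=\rho$ (where the decay is super-exponential in $k$ but the argument $\rho_k=2^k\rho$ grows so fast that even $N(\rho_k)\sim\exp(\exp(\rho_k))$ would defeat it), the remainder cannot be shown to vanish. The point is precisely that being a \emph{positive subsolution} does not give any one-sided quantitative control at the boundary; subharmonicity only propagates bounds inward from larger scales.

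The paper closes this gap with a different idea: a lower bound on the Poisson kernel $P^t$ of the shifted operator $\Delta+a(y+t)^{-1}\partial_y$, uniform in $t\in[0,1/2]$, combined with the comparison principle $u(0,1)\ge\int_{\R^n}P^t(-x,1-t)\,u(x,t)\,dx$. This gives the \emph{direct, non-recursive} estimate
\[
\int_0^{1/2}\!\!\int_{\R^n}\frac{u(x,y)}{(1+|x|^2)^C}\,dx\,dy\ \le\ C\,u(0,1),
\]
which controls the weighted integral of $u$ over the entire boundary strip in one shot; the co-area formula and the $L_a$-subharmonic sup bound (Lemma \ref{lem sup bound}, applied after even reflection) then finish the proof. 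It is this Poisson-kernel lower bound, not the weight-induced smallness of the strip inside a ball, that supplies the quantitative input near $y=0$. Your reflection-and-mean-value computation is a useful observation, but on its own it cannot replace the Poisson representation step, and the bootstrap as written does not close.
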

\begin{proof}
As in \cite{M}, for any two different points $z_i=(x_i,y_i)\in\R^{n+1}_+$ and a $C^1$ curve $\gamma(t)=(\gamma_1(t),\gamma_2(t)), t\in[0,1]$ connecting them,
\begin{eqnarray*}
\log \frac{u(z_2)}{u(z_1)}&=&\int_0^1\nabla\log u(\gamma(t))\cdot\frac{d\gamma(t)}{dt}dt\\
&\leq&\int_0^1\Big|\nabla\log u(\gamma(t))\Big|\Big|\frac{d\gamma(t)}{dt}\Big|dt\\
&\leq &C\int_0^1\frac{\Big|\frac{d\gamma(t)}{dt}\Big|}{\gamma_{n+1}(t)}dt\\
&\leq &C\mbox{Length}_H(\gamma).
\end{eqnarray*}
Here $\mbox{Length}_H(\gamma)$ is the length of $\gamma$ with respect to the hyperbolic metric on $\R^{n+1}_+$,
\[ds^2:=\frac{dx^2+dy^2}{y^2}.\]
In particular, we can take $\gamma$ to be the geodesic between $z_1$ and $z_2$. This gives
\[\log \frac{u(z_2)}{u(z_1)}\leq C\mbox{dist}_H(z_1,z_2).\]
However, we know the distance function $\mbox{dist}_H$ has the form
\[\mbox{dist}_H(z_1,z_2)=\mbox{arccosh}\left(1+\frac{|x_1-x_2|^2+\left(y_1-y_2\right)^2}{2y_1y_2}\right).\]
This then implies that
\begin{equation}\label{Harnack in half-plane}
\frac{u(z_2)}{u(z_1)}\leq \left(1+\frac{|x_1-x_2|^2+\left(y_1-y_2\right)^2}{2y_1y_2}\right)^C.
\end{equation}
In particular, for any $(x,y)\in\R^{n+1}_+$,
\[u(x,y)\leq u(0,1)\left(1+\frac{|x|^2+\left(y-1\right)^2}{2y}\right)^C.\]
In particular, in $\{y\geq 1/2\}$,
\begin{equation}\label{polynomial bound 2}
u(x,y)\leq  C\left(|x|^2+y^2+1\right)^C.
\end{equation}

For every $t\in(0,1/2)$, let $P^t(x,y)$ be the Poisson kernel of the elliptic operator $\Delta+a(y+t)^{-1}\partial_y$
on $\R^{n+1}_+$. Note that when $t=0$, it is the usual Poisson kernel for the operator $L_a$.
By \cite[Section 2.4]{C-S}, modulo a constant
\[P^0(x,y)=\frac{y^{2s}}{\left(|x|^2+y^2\right)^{\frac{n+2s}{2}}}.\]
From the uniqueness of the Poisson kernel we deduce the following production rule:
for $y>t$,
\begin{equation}\label{production}
P^0(x,y+t)=\int_{\R^n}P^t(x-\xi,y)P^0(\xi,t)d\xi.
\end{equation}

Denote the Fourier transform of $P^t(x,y)$ in $x$ by $\hat{P}^t(\zeta,y)$.
$\hat{P}^0(\zeta,y)$ has the form (modulo a constant) $\Phi(y|\zeta|)$, where
\[\Phi(|\zeta|)=d_{n,s}\int_{\R^n}\left(1+|x|^2\right)^{-\frac{n+2s}{2}}e^{-\sqrt{-1}x\cdot\zeta}dx.\]
Here $d_{n,s}$ is a normalization constant.

Since $\hat{P}^0$ satisfies
\[-|\zeta|^2\hat{P}^0(\zeta,y)+\frac{\partial^2}{\partial y^2}\hat{P}^0(\zeta,y)+\frac{a}{y}\frac{\partial}{\partial y}
\hat{P}^0(\zeta,y)=0,\]
$\Phi$ satisfies
\[\Phi^{\prime\prime}(t)+at^{-1}\Phi^\prime(t)-\Phi(t)=0, \ \ \mbox{in} \ (0,+\infty).\]
By definition and the Lebesgue-Riemann lemma, $\Phi(0)=1$ and $\lim_{t\to+\infty}\Phi(t)=0$.
Then by a maximum principle argument, we know $\Phi(t)>0$ and $\Phi(t)$ is decreasing in $t$.


By \eqref{production},
\[\hat{P}^t(\zeta,y)=\frac{\hat{P}^0(\zeta,y+t)}{\hat{P}^0(\zeta,t)}=\frac{\Phi^0((y+t)|\zeta|)}{\Phi(t|\zeta|)}.\]
Hence there exists a constant $C$ depending only on $n$ and $a$ so that for all $t\in[0,1/2]$,
\begin{equation}\label{2.1}
P^t(0,1-t)=\int_{\R^n}\frac{\Phi(|\zeta|)}{\Phi(t|\zeta|)}d\zeta\geq \int_{\R^n}\Phi(|\zeta|)d\zeta=P^0(0,1)\geq\frac{1}{C}.
\end{equation}

Since $P^t$ is a positive solution of
\[\Delta P^t+\frac{a}{y+t}\frac{\partial P^t}{\partial y}=0, \ \ \mbox{in}\ \R^{n+1}_+,\]
the gradient estimate Theorem \ref{Yau's gradient estimate} holds for $P^t$ with the same constant $C(n)$.
Then similar to \eqref{Harnack in half-plane}, we get
\begin{equation}\label{2.2}
P^t(x,1-t)\geq P^t(0,1-t)\left(1+\frac{|x|^2}{2(1-t)^2}\right)^{-C}.
\end{equation}

By the Poisson representation,
\begin{equation}\label{Poisson 1}
u(0,1)\geq\int_{\R^n}P^t(-x,1-t)u(x,t)dx.
\end{equation}
In fact, for any $R>0$, consider the boundary value $u(x,t)\chi_{\{|x|<R\}}$, and let $w^r$ be the solution of
\begin{equation*}
\left\{
\begin{aligned}
 &L_a w^r=0, \ \mbox{in}\ B_r^+, \\
 &w^r=u(x,t)\chi_{\{|x|<R\}}, \ \ \mbox{on}\ \partial^0B_r^+,\\
&w^r=0, \ \ \mbox{on}\ \partial^0B_r^+.
                          \end{aligned} \right.
\end{equation*}
Such $w^r$ exists and is unique. By the maximum principle, as $r\to+\infty$, they are uniformly bounded and increase to
\[\int_{\{|x|<R\}}P^t(x-\zeta,y)u(\zeta,t)dx.\]
Here we have used the fact that bounded $L_a$-harmonic function in $\R^{n+1}_+$ with boundary value $u(x,t)\chi_{\{|x|<R\}}$ is unique.

By the comparison principle, for each $r>0$, $w^r\leq u$. Thus we have
\[u(0,1)\geq\int_{\{|x|<R\}}P^t(-x,1-t)u(x,t)dx.\]
Then let $R\to+\infty$ we get \eqref{Poisson 1}.

Substituting \eqref{2.1} and \eqref{2.2} into  \eqref{Poisson 1}, we see for any $t\in(0,1/2)$,
\[\int_{\R^n}\frac{u(x,t)}{\left(|x|^2+1\right)^C} dx\leq C(n,s)u(0,1).\]
Integrating $t$ in $[0,1/2]$ gives
\begin{equation}\label{3.1}
\int_0^{1/2}\int_{\R^n}\frac{u(x,y)}{\left(|x|^2+1\right)^C}dxdy\leq
Cu(0,1).
\end{equation}

For any $x_0\in\R^n$ with $|x_0|>2$, by the co-area formula, we find an $r\in(1,2)$ so that
\begin{eqnarray*}
\int_{\partial^+ B_r^+(x_0)}y^au &\leq &\int_{B_{2}^+(x_0)\setminus B_1^+(x_0)}y^au(x,y)dxdy\\
&\leq& C\left(1+|x_0|^2\right)^C\int_{\left(B_{2}^+(x_0)\setminus B_1^+(x_0)\right)\cap\{0<y<1/2\}}\frac{u(x,y)}{\left(1+|x|^2\right)^C}dxdy\\
&&+\int_{\left(B_{2}^+(x_0)\setminus B_1^+(x_0)\right)\cap\{y>1/2\}}u(x,y)dxdy\\
&\leq& C\left(1+|x_0|^2\right)^C,
\end{eqnarray*}
thanks to \eqref{polynomial bound 2} and \eqref{3.1}.

After extending $u$ evenly to $B_r(x_0)$, $u$ becomes a positive $L_a$-subharmonic function, thanks to its boundary condition on $\partial^0\R^{n+1}_+$. Then Lemma \ref{lem sup bound} implies that
\[\sup_{B_{1/2}(x_0)}u\leq C(n,a)\int_{\partial B_r(x_0)}y^au \leq C\left(1+|x_0|^2\right)^C.\]

Together with \eqref{polynomial bound 2}, we get a polynomial bound for $u$ as claimed.
\end{proof}

\section{Almgren monotonicity formula}
\setcounter{equation}{0}

We first state a Pohozaev identity for the application below. 
\begin{lem}\label{lem Pohozaev}
For any $x\in\R^n$ and $r>0$,
\begin{eqnarray*}
&&\left(n-1+a\right)\int_{B_r^+(x,0)}y^a\left(|\nabla u|^2+|\nabla v|^2\right)\\
&=&r\int_{\partial^+B_r^+(x,0)}y^a\left(|\nabla u|^2+|\nabla v|^2\right)-2y^a\left(\Big|\frac{\partial u}{\partial r}\Big|^2+\Big|\frac{\partial v}{\partial r}\Big|^2\right)\\
&&+r\int_{S_r^n(x,0)}u^2v^2-n\int_{\partial^0B_r^+(x,0)}u^2v^2.
\end{eqnarray*}
\end{lem}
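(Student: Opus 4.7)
The plan is to apply the classical Pohozaev multiplier to both equations. I would multiply $L_a u = 0$ by $Z\cdot\nabla u$, where $Z := (x-x_0,y)$ is the position vector based at $(x_0,0)$, integrate over the half ball $B_r^+(x_0,0)$, and then add the analogous identity obtained from $L_a v = 0$ to produce the symmetric combination on the left-hand side.

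For the interior terms I would use two integrations by parts. The first yields $\int_{B_r^+} y^a\,\nabla u\cdot\nabla(Z\cdot\nabla u) = \int_{\partial B_r^+} y^a(\nabla u\cdot\nu)(Z\cdot\nabla u)$. Expanding $\nabla u\cdot\nabla(Z\cdot\nabla u) = |\nabla u|^2 + \tfrac{1}{2}\,Z\cdot\nabla|\nabla u|^2$, a second integration by parts on the last term invokes the weighted divergence identity
\[
\mbox{div}(y^a Z) \;=\; y^a\,\mbox{div}\,Z + a y^{a-1} Z_{n+1} \;=\; (n+1+a)\,y^a,
\]
which, after rearrangement, is precisely the source of the coefficient $(n-1+a)$ on the left-hand side of the lemma.

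Next I would analyze the two pieces of $\partial B_r^+(x_0,0)$ separately. On the spherical piece $\partial^+ B_r^+$, the outward normal is $Z/r$, so $Z\cdot\nabla u = r\,\partial_r u$ and $\nabla u\cdot\nu = \partial_r u$; this produces the boundary expression $r\,y^a\bigl[|\nabla u|^2 - 2(\partial_r u)^2\bigr]$ after collecting terms (together with its $v$-counterpart). On the flat piece $\partial^0 B_r^+\subset\{y=0\}$ the outward normal is $-e_{n+1}$, so $y^a\,\nabla u\cdot\nu = -y^a\partial_y u \to -\partial_y^a u = -uv^2$ by the Caffarelli--Silvestre boundary condition, while $Z\cdot\nabla u|_{y=0} = (x-x_0)\cdot\nabla_x u$. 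After moving all signs, the flat-face contribution to the right-hand side from the $u$-equation is $2\int_{\partial^0 B_r^+} uv^2\,(x-x_0)\cdot\nabla_x u$; combining with the analogous $v$-term via the elementary identity
\[
uv^2\,\nabla_x u + u^2 v\,\nabla_x v \;=\; \tfrac{1}{2}\,\nabla_x(u^2v^2)
\]
reduces the total flat-face contribution to $\int_{\partial^0 B_r^+}(x-x_0)\cdot\nabla_x(u^2v^2)$. A final $n$-dimensional divergence theorem, using $\nabla_x\cdot(x-x_0)=n$ together with $(x-x_0)\cdot\nu = r$ on $S_r^n(x_0,0)$, rewrites this as $r\int_{S_r^n}u^2v^2 - n\int_{\partial^0 B_r^+}u^2v^2$, which is exactly the remaining right-hand side asserted by the lemma.

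The only technically delicate point is justifying the integration by parts down to the degenerate boundary $\{y=0\}$ when $a\in(-1,0)$. My plan is to first carry out the whole calculation on $B_r^+\cap\{y\geq\varepsilon\}$ and then let $\varepsilon\to 0^+$, using the fact that $y^a\partial_y u \to uv^2$ and $y^a\partial_y v \to u^2v$ together with the local integrability coming from standard regularity theory for the Caffarelli--Silvestre extension. This approximation step is routine but is the one spot where care is needed; once it is in hand, collecting the contributions above yields the identity as stated.
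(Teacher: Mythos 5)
Your proposal is correct and follows exactly the approach the paper indicates (multiply $L_a u = 0$ by $z\cdot\nabla u$, likewise for $v$, integrate by parts on $B_r^+$, and invoke the Caffarelli--Silvestre boundary condition); the paper leaves the details to references \cite{C-S} and \cite{TVZ 2}, while you have supplied them, including the key identity $\mathrm{div}(y^aZ)=(n+1+a)y^a$ that produces the coefficient $n-1+a$ and the flat-face reduction via $\nabla_x(u^2v^2)$ and the $n$-dimensional divergence theorem. The $\varepsilon$-truncation you propose for the degenerate boundary is the standard rigor step and is fine.
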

\begin{proof}
This can be proved by multiplying the equation \eqref{equation extension} by $z\cdot\nabla u$ (and $z\cdot\nabla v$) and integrating by parts on $B_r^+$, cf. \cite[Lemma 6.2]{C-S} and \cite[Lemma 3.10]{TVZ 2}.
\end{proof}

Let
\[E(r):=\frac{1}{r^{n-1+a}}\int_{B_r^+}y^a\left(|\nabla u|^2+|\nabla v|^2\right)+\frac{1}{r^{n-1+a}}\int_{\partial^0B_r^+}u^2v^2,\]
\[H(r):=\frac{1}{r^{n+a}}\int_{\partial^+B_r^+}y^a\left(u^2+v^2\right),\]
and $N(r):=E(r)/H(r)$.

We have the following  (cf. \cite[Theorem 3.11]{TVZ} for the $1/2$-Lapalcian case and \cite[Proposition 2.11]{TVZ 2} for general $s$-Laplacian case).

\begin{prop}[\bf Almgren monotonicity formula]\label{prop Almgren}
$N(r)$ is non-decreasing in $r>0$.
\end{prop}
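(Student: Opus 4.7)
Follow the classical Almgren scheme: show that $(\log N)'(r) = E'(r)/E(r) - H'(r)/H(r) \geq 0$ by first expressing $E'$ and $H'$ as weighted boundary integrals and then comparing them via a vector-valued Cauchy-Schwarz inequality on $\partial^+ B_r^+$. Throughout, I introduce the shorthand $G(r) := r^{n-1+a} E(r) = \int_{B_r^+} y^a(|\nabla u|^2+|\nabla v|^2) + K(r)$ with $K(r) := \int_{\partial^0 B_r^+} u^2 v^2$, and $B(r) := r^{n+a}H(r) = \int_{\partial^+ B_r^+} y^a(u^2+v^2)$. Because $u,v>0$, the unique continuation principle for $L_a$ (which has the Muckenhoupt $A_2$-weight $y^a$) guarantees $H(r)>0$, so that $N$ is well defined.

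\textbf{Step 1: differentiating $H$.} First I would carry $H(r)$ onto the fixed half-sphere via $z=r\xi$, differentiate under the integral, and rescale back to obtain $H'(r) = \frac{2}{r^{n+a}}\int_{\partial^+ B_r^+} y^a(uu_r+vv_r)$. Applying the divergence theorem on $B_r^+$ together with $L_a u = L_a v = 0$ in the interior and the Neumann-type boundary conditions $\partial_y^a u = uv^2$, $\partial_y^a v = u^2 v$ on $\partial^0 B_r^+$ yields
\[\int_{\partial^+ B_r^+} y^a(uu_r+vv_r) = \int_{B_r^+} y^a(|\nabla u|^2+|\nabla v|^2) + 2K(r) = G(r) + K(r),\]
so $H'(r) = 2(G(r)+K(r))/r^{n+a}$.

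\textbf{Step 2: differentiating $E$ and closing via Cauchy--Schwarz.} A direct differentiation gives $G'(r) = \int_{\partial^+ B_r^+} y^a(|\nabla u|^2+|\nabla v|^2) + K'(r)$ with $K'(r) = \int_{S_r^n} u^2 v^2$ by the coarea formula. Substituting this into the Pohozaev identity of Lemma~\ref{lem Pohozaev} eliminates the tangential components of $\nabla u, \nabla v$ and, after rearranging, produces
\[r G'(r) - (n-1+a) G(r) = 2r \int_{\partial^+ B_r^+} y^a (u_r^2+v_r^2) + (1-a) K(r).\]
Since $E'(r) = [rG'(r) - (n-1+a)G(r)]/r^{n+a}$ and $1 - a = 2s > 0$, this gives
\[E'(r) = \frac{2}{r^{n-1+a}} \int_{\partial^+ B_r^+} y^a(u_r^2+v_r^2) + \frac{2s}{r^{n+a}} K(r).\]
Setting $A(r) := \int_{\partial^+ B_r^+} y^a(u_r^2+v_r^2)$, the vector-valued Cauchy--Schwarz inequality applied to $(u,v)$ and $(u_r,v_r)$ in $L^2(y^a\, d\sigma;\R^2)$ reads $A(r) B(r) \geq (G(r)+K(r))^2$, from which
\[E'(r) H(r) - E(r) H'(r) \geq \frac{2 K(r) (G(r)+K(r))}{r^{2n-1+2a}} + \frac{2s\, K(r)\, B(r)}{r^{2n+2a}} \geq 0,\]
establishing the monotonicity of $N(r)$.

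\textbf{Main obstacle.} The delicate point is the bookkeeping of the three appearances of the coupling term $K(r) = \int_{\partial^0 B_r^+} u^2 v^2$: once in $H'$ through integration by parts against the nonlinear Neumann condition, once in $E'$ through the Pohozaev identity, and once in the defect of the Cauchy--Schwarz inequality. Any of these could in principle spoil the monotonicity, and they cancel only because all three come with the right sign, which in turn rests on $1 - a = 2s > 0$. Beyond verifying these signs and the usual boundary-regularity issues for $y^a$-weighted integration by parts near $\{y=0\}$, I do not expect structural surprises compared with the standard Almgren argument for the case $a=0$.
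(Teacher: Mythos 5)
Your proposal is correct and follows essentially the same scheme as the paper: compute $H'(r)$ by integrating $L_a u=L_a v=0$ against the Neumann data, compute $E'(r)$ via the Pohozaev identity of Lemma~\ref{lem Pohozaev}, and combine the two through the Cauchy--Schwarz inequality on $\partial^+ B_r^+$, with the sign $1-a=2s>0$ doing the final work. Your bookkeeping of the boundary term $K(r)=\int_{\partial^0 B_r^+}u^2v^2$ is in fact slightly more careful than the paper's displayed formulas (the paper's \eqref{4.2} omits a factor of $2$ in front of $\int_{\partial^+B_r^+}y^a(u_r^2+v_r^2)$, which your derivation $E'(r)=\frac{2}{r^{n-1+a}}A(r)+\frac{1-a}{r^{n+a}}K(r)$ correctly recovers), but the argument is the same.
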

\begin{proof}
Direct calculation using the equation \eqref{equation extension} shows that
\begin{eqnarray}\label{4.1}
H^\prime(r)&=&\frac{2}{r^{n+a}}\int_{\partial^+B_r^+}y^a\left(u\frac{\partial u}{\partial r}+v\frac{\partial v}{\partial r}\right) \nonumber\\
&=&\frac{2}{r^{n+1}}\int_{B_r^+}y^a\left(|\nabla u|^2+|\nabla v|^2\right)+\frac{4}{r^{n+a}}\int_{\partial^0B_r^+}u^2v^2   \\
&=&\frac{2E(r)}{r}+\frac{2}{r^{n+a}}\int_{\partial^0B_r^+}u^2v^2.\nonumber
\end{eqnarray}
Using Lemma \ref{lem Pohozaev}, we have
\begin{equation}\label{4.2}
E^\prime(r)=\frac{1}{r^{n-1+a}}\int_{\partial^+B_r^+}y^a\left(\Big|\frac{\partial u}{\partial r}\Big|^2+\Big|\frac{\partial v}{\partial r}\Big|^2\right)+\frac{1-a}{r^{n+a}}\int_{\partial^0B_r^+}u^2v^2.
\end{equation}

Combining these two, we obtain
\begin{eqnarray}\label{Almgren derivative}
\frac{1}{2}\frac{N^\prime(r)}{N(r)}&\geq&\frac{\int_{\partial^+B_r^+}y^a\left(\big|\frac{\partial u}{\partial r}\big|^2+\big|\frac{\partial v}{\partial r}\big|^2\right)}{\int_{\partial^+B_r^+}y^a\left(u\frac{\partial u}{\partial r}+v\frac{\partial v}{\partial r}\right)}
-\frac{\int_{\partial^+B_r^+}y^a\left(u\frac{\partial u}{\partial r}+v\frac{\partial v}{\partial r}\right)}{\int_{\partial^+B_r^+}y^a\left(u^2+v^2\right)}\\
&&+\frac{1-a}{N(r)}\frac{\int_{\partial^0B_r^+}u^2v^2}{\int_{\partial^+B_r^+}y^a\left(u^2+v^2\right)},\nonumber
\end{eqnarray}
which is nonnegative.
\end{proof}

Note that \eqref{4.1} also implies that
\begin{equation}\label{4.3}
\frac{d}{dr}\log H(r)=\frac{2N(r)}{r}+\frac{2\int_{\partial^0B_r^+}u^2v^2}{\int_{\partial^+B_r^+}y^a\left(u^2+v^2\right)}\geq\frac{2N(r)}{r}.
\end{equation}

Combining this with Proposition \ref{prop Almgren} we have
\begin{prop}\label{monotonocity 2}
Let $(u,v)$ be a solution of \eqref{equation extension}. If $N(R)\geq d$, then for $r>R$,
$r^{-2d}H(r)$ is nondecreasing in $r$.
\end{prop}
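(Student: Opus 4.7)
The plan is to assemble the conclusion directly from the two ingredients that have just been established in the section: the monotonicity of the Almgren quotient $N(r)$ (Proposition \ref{prop Almgren}) and the differential inequality \eqref{4.3}, which reads
\[\frac{d}{dr}\log H(r)\;\geq\;\frac{2N(r)}{r}.\]
No new integration by parts or Pohozaev-type identity is required.

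First, I would observe that Proposition \ref{prop Almgren} propagates the hypothesis: since $N$ is non-decreasing on $(0,\infty)$ and $N(R)\geq d$, one has $N(r)\geq d$ for every $r\geq R$. Plugging this lower bound into \eqref{4.3} gives, for all $r\geq R$,
\[\frac{d}{dr}\log H(r)\;\geq\;\frac{2d}{r}.\]

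Next, I would rewrite the desired conclusion as a statement about the logarithmic derivative of $r^{-2d}H(r)$. Computing
\[\frac{d}{dr}\log\bigl(r^{-2d}H(r)\bigr)\;=\;\frac{d}{dr}\log H(r)\;-\;\frac{2d}{r}\;\geq\;0\qquad\text{for all }r\geq R,\]
one concludes that $\log(r^{-2d}H(r))$ is non-decreasing on $[R,\infty)$, hence so is $r^{-2d}H(r)$.

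There is essentially no obstacle here: the only things to note are that $H(r)>0$ (so the logarithm is well-defined) — which is immediate from the positivity of $u$ and $v$ — and that $H$ is differentiable in $r$, which was used already to derive \eqref{4.1} and \eqref{4.3}. If one prefers to avoid the logarithmic form, the same conclusion follows by integrating the inequality $H'(r)/H(r)\geq 2d/r$ from any $r_1$ to $r_2$ with $R\leq r_1\leq r_2$, yielding $H(r_2)/H(r_1)\geq (r_2/r_1)^{2d}$, i.e. $r_2^{-2d}H(r_2)\geq r_1^{-2d}H(r_1)$.
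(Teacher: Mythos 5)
Your proof is correct and follows precisely the route the paper intends: the paper derives \eqref{4.3} and then states that combining it with Proposition \ref{prop Almgren} yields the result, which is exactly your argument of propagating $N(r)\geq d$ forward by monotonicity and integrating $\frac{d}{dr}\log H(r)\geq 2d/r$.
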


The following result states a doubling property of $(u,v)$.
\begin{prop}\label{doubling property}
Let $(u,v)$ be a solution of \eqref{equation} on
$B_R$. If $N(R)\leq d$, then for every $0<r_1\leq r_2\leq R$
\begin{equation}\label{eq:h_monotone}
\dfrac{H(r_2)}{H(r_1)}\leq e^{\frac{d}{1-a}}\dfrac{r_2^{2d}}{r_1^{2d}}
\end{equation}
\end{prop}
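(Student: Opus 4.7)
The plan is to take the logarithmic derivative of $r^{-2d}H(r)$ and bound it using the Almgren hypothesis together with the Pohozaev identity. Identity \eqref{4.3}, which follows from \eqref{4.1}, gives
\[
\frac{d}{dr}\log\!\bigl(r^{-2d}H(r)\bigr) \;=\; \frac{2(N(r)-d)}{r} \;+\; \frac{2\int_{\partial^0 B_r^+}u^2v^2}{\int_{\partial^+ B_r^+}y^a(u^2+v^2)}.
\]
The Almgren monotonicity formula (Proposition \ref{prop Almgren}) combined with the hypothesis $N(R)\le d$ forces $N(r)\le d$ for every $r\in(0,R]$, so the first term on the right is non-positive.

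For the remaining boundary term, I would invoke \eqref{4.2}, which rearranges to $(1-a)\int_{\partial^0 B_r^+}u^2v^2 \le r^{n+a}E'(r)$. Since $r^{n+a}H(r)=\int_{\partial^+ B_r^+}y^a(u^2+v^2)$, this yields
\[
\frac{d}{dr}\log\!\bigl(r^{-2d}H(r)\bigr) \;\le\; \frac{2E'(r)}{(1-a)H(r)}.
\]
Integrating from $r_1$ to $r_2$ then reduces matters to controlling $\int_{r_1}^{r_2}E'(r)/H(r)\,dr$ by $d/2$. To do so, I would combine (i) the pointwise bound $E=NH\le dH$ coming from $N\le d$, (ii) the monotonicity $H'\ge 0$ (a direct consequence of \eqref{4.1}), and (iii) the integration-by-parts identity
\[
\int_{r_1}^{r_2}\frac{E'(r)}{H(r)}\,dr \;=\; N(r_2)-N(r_1) \;+\; \int_{r_1}^{r_2}N(r)\,\frac{H'(r)}{H(r)}\,dr,
\]
closing the estimate through the coefficient $(1-a)^{-1}$ inherited from Pohozaev.

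The main obstacle I foresee is the self-referential nature of this final integral bound: after the integration by parts, the right-hand side still involves $\log(H(r_2)/H(r_1))$, namely the very quantity being estimated. Resolving this circularity relies on exploiting the explicit factor $(1-a)^{-1}$ supplied by \eqref{4.2}, which reflects the Hardy-type boundary structure of the extended operator $L_a$ and is precisely what produces the exponential constant $e^{d/(1-a)}$ in the statement. A careful bookkeeping of the resulting differential inequality, rather than a crude bound using only $N\le d$ in the identity for $H'/H$ (which would produce an inferior exponent), is what yields the clean exponent $2d$ multiplied by the $a$-dependent constant.
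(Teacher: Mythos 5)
The setup is right up to the point where you need to bound the boundary term $\frac{2\int_{\partial^0 B_r^+}u^2v^2}{\int_{\partial^+ B_r^+}y^a(u^2+v^2)}$, but the bound you choose does not close. You invoke \eqref{4.2} to get $(1-a)\int_{\partial^0 B_r^+}u^2v^2 \le r^{n+a}E'(r)$ and thus control the boundary term by $\frac{2E'(r)}{(1-a)H(r)}$. After integrating and integrating by parts, the term $\int_{r_1}^{r_2} N(r)\,\frac{H'(r)}{H(r)}\,dr$ is bounded only by $d\log\!\left(H(r_2)/H(r_1)\right)$, so you arrive at an inequality of the form $L \le \frac{2d}{1-a}(1+L)$ with $L=\log\!\left(H(r_2)/H(r_1)\right) - 2d\log(r_2/r_1)$. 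That is genuinely circular: it yields nothing unless $\frac{2d}{1-a}<1$, and even then does not produce the clean exponent. The factor $(1-a)^{-1}$ by itself does not break the loop, and no amount of bookkeeping fixes it, because the defect is that your bound on the boundary term carries an extra $1/H(r)$ factor.

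The paper avoids this entirely by bounding the boundary term through the Almgren differential inequality \eqref{Almgren derivative} rather than through $E'$. Dropping the nonnegative Cauchy--Schwarz part of \eqref{Almgren derivative} gives directly
\[
\frac{2\int_{\partial^0 B_r^+}u^2v^2}{\int_{\partial^+ B_r^+}y^a(u^2+v^2)}\;\le\;\frac{N'(r)}{1-a},
\]
with no $H$ and no $E$ on the right. Substituting into \eqref{4.3}, one gets $\frac{d}{dr}\log H(r)\le \frac{2d}{r}+\frac{N'(r)}{1-a}$, and integrating from $r_1$ to $r_2$ uses only $N(r_1)\ge 0$ and $N(r_2)\le d$ to produce $\frac{d}{1-a}$ in the exponent. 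The key difference is that $N'$ integrates directly to a quantity controlled by the hypothesis, whereas $E'/H$ does not; you should bound the boundary term by a total derivative of $N$, not of $E$.
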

\begin{proof}
This is similar to the proof of \cite[Proposition 5.2]{BTWW}. Since for all $r\in(0,R]$, $N(r)\leq d$, by \eqref{Almgren derivative} and \eqref{4.3} we have
\begin{eqnarray*}
\frac{d}{dr}\log H(r)&\leq&\frac{2d}{r}+\frac{2\int_{\partial^0B_r^+}u^2v^2}{\int_{\partial^+B_r^+}y^a\left(u^2+v^2\right)}\\
&\leq&\frac{2d}{r}+\frac{1}{1-a}N^\prime(r).
\end{eqnarray*}
Integrating this from $r_1$ to $r_2$, since $N(r_1)\geq 0$ and $N(r_2)\leq d$, we get \eqref{eq:h_monotone}.
\end{proof}

\begin{prop}\label{equivalent condition of polynomial growth}
Let $(u,v)$ be a solution of \eqref{equation} on
$\R^{n+1}_+$. The following two conditions are equivalent:
\begin{enumerate}
\item {\bf (Polynomial growth)} There exist two positive constants $C$ and $d$ such that
\begin{equation}\label{polynomial bound 3}
u(x,y)+v(x,y)\leq C\left(1+|x|^2+y^2\right)^{\frac{d}{2}}.
\end{equation}
\item {\bf (Upper bound on $N(R)$)} There exists a positive constant $d$ such that
\[N(R)\leq d, \ \ \ \forall R>0.\]
\end{enumerate}
\end{prop}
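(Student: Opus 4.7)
The plan is to prove the equivalence by relating the growth rate of $H(r)$ to the Almgren quotient $N(r)$, and then converting weighted $L^2$-growth on half-spheres into pointwise polynomial growth through a mean value inequality for $L_a$-subharmonic functions.

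For the direction $(1)\Rightarrow(2)$, I would start from the bound $u+v\leq C(1+|x|^2+y^2)^{d/2}$. On $\partial^+B_r^+$ this forces $u^2+v^2\leq C(1+r^2)^d$, which upon integration against the weight $y^a$ yields $H(r)\leq Cr^{2d}$ for $r\geq 1$. On the other hand, the differential inequality \eqref{4.3} combined with the monotonicity of $N$ (Proposition \ref{prop Almgren}) gives, for any $r_0>0$ and $r>r_0$,
\[
\log\frac{H(r)}{H(r_0)}\ \geq\ \int_{r_0}^{r}\frac{2N(\rho)}{\rho}\,d\rho\ \geq\ 2N(r_0)\log\frac{r}{r_0},
\]
so that $H(r)\geq H(r_0)(r/r_0)^{2N(r_0)}$. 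If $N(r_0)>d$ for some $r_0$, the lower bound would eventually dominate the polynomial upper bound on $H$, a contradiction; hence $N(r_0)\leq d$ for every $r_0>0$.

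For $(2)\Rightarrow(1)$, assuming $N(R)\leq d$ for all $R>0$, Proposition \ref{doubling property} immediately gives $H(r)\leq Cr^{2d}$ for $r\geq 1$. Integrating in $r$ and using $\int_{\partial^+B_r^+}y^a(u^2+v^2)=r^{n+a}H(r)$ converts this into the weighted $L^2$ estimate
\[
\int_{B_R^+(0)}y^{a}(u^2+v^2)\leq CR^{n+a+2d+1},
\]
and since any translated half-ball satisfies $B_r^+(x_0,0)\subset B_{|x_0|+r}^+(0)$, the same bound holds with $R$ replaced by $|x_0|+r$. Next I would extend $u$ and $v$ evenly across $\{y=0\}$; because $\partial_y^au=uv^2\geq 0$ and $\partial_y^av=vu^2\geq 0$, the extensions are $L_a$-subharmonic in the distributional sense. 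Applying the weighted sup-bound in Lemma \ref{lem sup bound} on the ball $B_r(x_0,0)$ with radius $r\asymp 1+|x_0|+y_0$ converts the integral bound into the pointwise estimate $u(x_0,y_0)+v(x_0,y_0)\leq C(1+|x_0|+y_0)^{d}$, giving (1).

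The main obstacle is the last step of the second direction: passing from weighted integral growth to pointwise growth. This requires the $A_2$-weighted Moser-type mean value inequality captured by Lemma \ref{lem sup bound}, together with the observation that the Neumann-type condition $\partial_y^a u\geq 0$ turns the even extension of $u$ into a genuine $L_a$-subsolution on all of $\R^{n+1}$. Once this is in hand, matching the exponents is a routine matter of bookkeeping, and the converse implication reduces to an ODE comparison argument for $\log H(r)$.
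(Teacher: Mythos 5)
Your proof is correct and follows essentially the same route as the paper: the direction $(2)\Rightarrow(1)$ uses the doubling estimate for $H$ (Proposition \ref{doubling property}) together with the even extension and the weighted mean-value/sup bound (Lemma \ref{lem sup bound}), while $(1)\Rightarrow(2)$ is the same contradiction argument via the lower growth bound $H(r)\gtrsim r^{2N(r_0)}$, which is precisely the content of Proposition \ref{monotonocity 2}. The only difference is that you re-derive that growth inequality from \eqref{4.3} and Almgren monotonicity rather than citing it, and you spell out the translation of balls needed before applying the sup bound; the underlying argument is identical.
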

\begin{proof}
Since the even extension of $u$ and $v$ to $\R^{n+1}$ are $L_a$-subharmonic, $(2)\Rightarrow(1)$ is a direct consequence of Proposition
\ref{doubling property} and Lemma \ref{lem sup bound}.

On the hand, if we have \eqref{polynomial bound 3}, but there exists some $R_0>0$ such that $N(R_0)\geq d+\delta$, where $\delta>0$.
By Proposition \ref{monotonocity 2}, for all $R>R_0$,
\[\sup_{\partial^+ B_R^+}\left(u^2+v^2\right)\geq H(R)\geq \frac{H(R_0)}{R_0^{2d+2\delta}}R^{2d+2\delta},\]
which clearly contradicts \eqref{polynomial bound 3}. In other words, for any $R>0$, we must have $N(R)\leq d$.
\end{proof}

\section{Blow down analysis}
\setcounter{equation}{0}

Let $(u,v)$ be a solution of \eqref{equation extension}. By Theorem \ref{lem polynomial growth} and Proposition \ref{equivalent condition of polynomial growth}, there exists a constant $d>0$ so that
\[\lim_{R\to+\infty}N(R):=d<+\infty.\]
The existence of this limit is guaranteed by the Almgren monotonicity formula ( Proposition \ref{prop Almgren}).  Note that for any $R<+\infty$, $N(R)\leq d$.

For $R\to+\infty$, define
\[u_R(z):=L(R)^{-1}u(Rz),\ \ \ \ v_R(z):=L(R)^{-1}v(Rz),\]
where $L(R)$ is chosen so that
\begin{equation}\label{normalization}
\int_{\partial^+B_1^+}u_R^2+v_R^2=1.
\end{equation}

$(u_R,v_R)$ satisfies
\begin{equation}\label{equation extension scaled}
\left\{
\begin{aligned}
 &L_a u_R=L_a v_R=0, \ \mbox{in}\ \R^{n+1}_+, \\
 &\partial^a_y u_R=\kappa_Ru_Rv_R^2,\ \ \partial_y^a v_R=\kappa_Rv_Ru_R^2 \ \ \mbox{on}\ \partial\R^{n+1}_+,
                          \end{aligned} \right.
\end{equation}
where $\kappa_R=L_R^2R^{1-a}$.

By \eqref{normalization},
\[L_R^2=R^{-1}\int_{\partial^+B_R^+}u^2+v^2.\]
By the Liouville theorem (see \cite[Propostion 3.9]{TVZ 2}), for some $\alpha>0$ small, there exists a $C_\alpha$ such that
\begin{equation}\label{coarse lower bound}
L(R)\geq C_\alpha R^\alpha.
\end{equation}
Thus $\kappa_R\to+\infty$ as $R\to+\infty$.

By Proposition \ref{doubling property}, for any $r>1$,
\[r^{-n-a}\int_{\partial^+ B_r^+}y^a\left(u_R^2+v_R^2\right)\leq r^{2d}.\]
Since $\partial^a_y u_R\geq 0$ on $\partial^+\R^{n+1}_+$, its even extension to $\R^{n+1}$ is $L_a$-subharmonic. Thus by Lemma \ref{lem sup bound} we can get
a uniform bound from the above integral bound,
\[\sup_{B_r^+}\left(u_R+v_R\right)\leq Cr^d, \  \forall r>1.\]
Then by the uniform H\"{o}lder estimate in \cite{TVZ}, for some
$\alpha\in(0,s)$, $(u_R,v_R)$ are uniformly bounded in
$C^\alpha_{loc}(\overline{\R^{n+1}_+})$.

Because $N(r;u_R,v_R)=N(Rr;u,v)\leq d$,
\begin{equation}\label{uniform H1 bound}
\int_{B_r^+}y^a\left(|\nabla u_R|^2+|\nabla v_R|^2\right)+\int_{\partial^0 B_r^+}\kappa_Ru_R^2v_R^2\leq dr^{n-1+a+2d},\ \forall r>1.
\end{equation}

After passing to a subsequence of $R$, we can assume that
$(u_R,v_R)$ converges to $(u_\infty, v_\infty)$ weakly in
$H^{1,a}_{loc}(\R^{n+1}_+)$, uniformly in
$C^\alpha_{loc}(\overline{\R^{n+1}_+})$.

Then for any $r>1$,
\begin{eqnarray*}
\int_{\partial^0 B_r^+}u_\infty^2v_\infty^2&=&\lim_{R\to+\infty}\int_{\partial^0 B_r^+}u_R^2v_R^2\\
&\leq&\lim_{R\to+\infty}\kappa_R^{-1}dr^{n-1+a+2d}=0.
\end{eqnarray*}
Thus $u_\infty v_\infty\equiv 0$ on $\partial\R^{n+1}_+$.

\begin{lem}\label{lem H1 strong convergence}
$(u_R,v_R)$ converges strongly to $(u_\infty,v_\infty)$  in
$H^{1,a}_{loc}(\R^{n+1}_+)$. $\kappa_Ru_R^2v_R^2$ converges to $0$ in $L^1_{loc}(\partial\R^2_+)$.
\end{lem}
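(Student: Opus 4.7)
The plan is to obtain both statements from weak-formulation test-function identities, using the $C^\alpha_{loc}$ convergence and the weak $H^{1,a}_{loc}$ convergence already established, together with one elementary a priori bound.

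\textbf{A priori boundary bound.} First I would record the uniform estimate
\[\int_{\partial^0 B_r^+}\eta^2\,\kappa_R u_R v_R^2\,dx\le C,\qquad \int_{\partial^0 B_r^+}\eta^2\,\kappa_R v_R u_R^2\,dx\le C,\]
valid for any cutoff $\eta\in C^\infty_c(\overline{B_{2r}^+})$ with $\eta\equiv 1$ on $B_r^+$, uniformly in $R$. These come from testing $L_a u_R=0$ (respectively $L_a v_R=0$) against the scalar function $\eta^2$: integration by parts rewrites the boundary integral $\int\eta^2\partial^a_y u_R=\int\eta^2\kappa_R u_R v_R^2$ as $-2\int y^a\eta\,\nabla u_R\cdot\nabla\eta$, which is controlled by Cauchy--Schwarz and \eqref{uniform H1 bound}.

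\textbf{Strong $H^{1,a}_{loc}$ convergence.} I would then test $L_a u_R=0$ against the admissible test function $\eta^2(u_R-u_\infty)$, obtaining
\[\int_{\R^{n+1}_+}\!\! y^a\eta^2\,\nabla u_R\cdot(\nabla u_R-\nabla u_\infty) + 2\!\int_{\R^{n+1}_+}\!\! y^a\eta(u_R-u_\infty)\nabla u_R\cdot\nabla\eta = \int_{\partial^0}\!\eta^2(u_R-u_\infty)\,\kappa_R u_R v_R^2.\]
The right-hand side is bounded by $\|u_R-u_\infty\|_{L^\infty(\mathrm{supp}\,\eta)}\cdot C\to 0$ thanks to the a priori bound and uniform $C^\alpha$ convergence. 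The second term on the left also vanishes in the limit by Cauchy--Schwarz, using uniform convergence of $u_R$ and the $L^{2,a}$ bound on $\nabla u_R$. In the first term the weak convergence $\nabla u_R\rightharpoonup\nabla u_\infty$ gives $\int y^a\eta^2\nabla u_R\cdot\nabla u_\infty\to\int y^a\eta^2|\nabla u_\infty|^2$, and we conclude $\int y^a\eta^2|\nabla u_R|^2\to\int y^a\eta^2|\nabla u_\infty|^2$; together with weak convergence this is equivalent to $\nabla u_R\to\nabla u_\infty$ strongly in $L^{2,a}_{loc}$. The same argument applied to $v_R$ completes this step.

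\textbf{Vanishing boundary term.} For $\kappa_R u_R^2 v_R^2\to 0$ in $L^1_{loc}(\partial\R^{n+1}_+)$, I would fix a compact $K\subset\partial^0 B_r^+$, and for each $\delta>0$ partition $K=K_\delta^u\cup K_\delta^v\cup K_\delta^0$ with $K_\delta^u:=K\cap\{u_\infty>\delta\}$, $K_\delta^v:=K\cap\{v_\infty>\delta\}$ and $K_\delta^0$ the residual. Since $u_\infty v_\infty\equiv 0$ on $\partial^0\R^{n+1}_+$, the first two sets are disjoint. On $K_\delta^u$, the $C^\alpha$ convergence forces $v_R\to 0$ uniformly, so
\[\int_{K_\delta^u}\kappa_R u_R^2 v_R^2 \le \|v_R\|_{L^\infty(K_\delta^u)}\!\!\int\eta^2\kappa_R u_R^2 v_R\longrightarrow 0,\]
using the second a priori bound; $K_\delta^v$ is symmetric. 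On $K_\delta^0$, eventually $u_R\le 2\delta$, hence
\[\int_{K_\delta^0}\kappa_R u_R^2 v_R^2 \le 2\delta\!\int\eta^2\kappa_R u_R v_R^2\le 2\delta\,C.\]
Letting $R\to\infty$ and then $\delta\to 0$ yields the $L^1_{loc}$ statement. The main obstacle is the very first a priori bound, nontrivial only because $\kappa_R\to\infty$; the key observation is that $\int\eta^2\kappa_R u_R v_R^2=\int\eta^2\partial^a_y u_R$, so integration by parts against the interior equation $L_a u_R=0$ makes $\kappa_R$ disappear, and the remainder of the argument is bookkeeping over the three boundary regions.
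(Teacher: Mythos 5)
The paper gives no proof of this lemma — it simply cites \cite[Lemma 4.6]{TVZ 2} (and \cite[Lemmas 5.6, 6.13]{TVZ}). Your proposal supplies a correct, self-contained proof, and the mechanism is the standard one used in those references, so there is nothing conceptually new to flag; but since the paper contains no proof to compare against, it is worth noting what your argument rests on and that the ordering of steps is chosen correctly.

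Your key observation — that the factor $\kappa_R$ can be made to disappear by integrating the \emph{interior} equation $L_a u_R=0$ against $\eta^2$, turning $\int\eta^2\partial^a_y u_R$ into an expression $-2\int y^a\eta\,\nabla u_R\cdot\nabla\eta$ controlled by the uniform $H^{1,a}$ bound \eqref{uniform H1 bound} — is exactly the right step, and it is the same identity as the paper's own display \eqref{Radon measure} with $\eta$ replaced by $\eta^2$. With the convention $\partial_y^a=\lim_{y\to0^+}y^a\partial_y$ and the outward normal $-e_y$ on $\partial^0$, the sign you wrote is indeed the correct one. The passage to strong $H^{1,a}_{loc}$ convergence by testing against $\eta^2(u_R-u_\infty)$ is correct: the boundary term is controlled by $\|u_R-u_\infty\|_{L^\infty(\mathrm{supp}\,\eta)}$ times the a priori bound, the mixed gradient term is handled by Cauchy--Schwarz, and weak convergence plus convergence of the weighted $L^2$ norms of $\nabla u_R$ upgrades to strong convergence in the Hilbert space $L^2(y^a\eta^2\,dz)$, hence in $L^2(y^a\,dz;B_r^+)$. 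For the $L^1_{loc}$ statement, your three-region decomposition of $K$ into $\{u_\infty>\delta\}$, $\{v_\infty>\delta\}$ and the residual, combined with the two a priori bounds $\int\eta^2\kappa_R u_R v_R^2\leq C$ and $\int\eta^2\kappa_R u_R^2 v_R\leq C$ and the uniform $C^\alpha$ convergence, is correct. Importantly, this argument is non-circular: a superficially slicker alternative would pass to the limit in the energy identity $\int y^a\nabla u_R\cdot\nabla(\eta^2 u_R)=\int\eta^2\kappa_R u_R^2v_R^2$ and invoke $u_\infty\partial_y^a u_\infty=0$, but in the paper's logical order that identity is Lemma \ref{lem limit equation 1}, whose proof \emph{uses} the present lemma (through the statement $w_Ru_R\to0$ in $L^1$); your decomposition argument avoids that circularity. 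One can also remark that the uniform (non-vanishing) bound on $\int_{\partial^0B_r^+}\kappa_Ru_R^2v_R^2$ is already contained in \eqref{uniform H1 bound}, but what you actually need for the region estimates are the asymmetric quantities $\int\kappa_R u_R v_R^2$ and $\int\kappa_R u_R^2 v_R$, which do require your integration-by-parts step.
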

For a proof see \cite[Lemma 4.6]{TVZ 2} (and \cite[Lemma 5.6 and Lemma 6.13]{TVZ} for the $1/2$-Lapalcian case).

\begin{coro}\label{coro limit homogeneous}
For any $r>0$,
\[N(r;u_\infty,v_\infty):=\frac{r\int_{B_r^+}y^a\left(|\nabla u_\infty|^2+|\nabla v_\infty|^2\right)}{\int_{\partial^+B_r^+}y^a\left(u_\infty^2+v_\infty^2\right)}\equiv d.\]
\end{coro}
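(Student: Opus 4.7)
The plan is to combine the scaling invariance of the Almgren quotient with the strong convergence provided by Lemma \ref{lem H1 strong convergence}. First I would establish the identity $N(r; u_R, v_R) = N(Rr; u, v)$, which together with the monotonicity in Proposition \ref{prop Almgren} and the definition of $d$ reduces the claim to passing to the limit on the left-hand side as $R \to \infty$.

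\emph{Scaling and monotonicity.} Using $u_R(z) = L(R)^{-1} u(Rz)$, $v_R(z) = L(R)^{-1} v(Rz)$ together with $\kappa_R = L(R)^2 R^{1-a}$, a direct change of variables in the definitions of $E$ and $H$ yields
\[E(r; u_R, v_R) = L(R)^{-2} E(Rr; u, v), \qquad H(r; u_R, v_R) = L(R)^{-2} H(Rr; u, v),\]
so that $N(r; u_R, v_R) = N(Rr; u, v)$ for every $r > 0$. Since $\rho \mapsto N(\rho; u, v)$ is non-decreasing by Proposition \ref{prop Almgren} and tends to $d$ at infinity, it follows that $\lim_{R \to \infty} N(r; u_R, v_R) = d$ for every fixed $r > 0$.

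\emph{Passing to the limit on the left.} Write $N(r; u_R, v_R)$ as a quotient: the numerator consists of the Dirichlet part $\int_{B_r^+} y^a(|\nabla u_R|^2 + |\nabla v_R|^2)$ and the interaction part $\int_{\partial^0 B_r^+} \kappa_R u_R^2 v_R^2$, and the denominator is $\int_{\partial^+ B_r^+} y^a(u_R^2 + v_R^2)$. By Lemma \ref{lem H1 strong convergence}, the Dirichlet part converges to the corresponding integral for $(u_\infty, v_\infty)$ (thanks to strong $H^{1,a}_{loc}$ convergence), and the interaction part tends to $0$. For the denominator, the uniform $C^\alpha_{loc}(\overline{\R^{n+1}_+})$ convergence of $(u_R, v_R)$ gives uniform convergence on the compact half-sphere $\partial^+ B_r^+$, and since $y^a$ is integrable against its surface measure for $a \in (-1, 1)$, dominated convergence yields
\[\int_{\partial^+ B_r^+} y^a(u_R^2 + v_R^2) \longrightarrow \int_{\partial^+ B_r^+} y^a(u_\infty^2 + v_\infty^2).\]

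\emph{Nonvanishing of the denominator and main obstacle.} At $r = 1$ the denominator equals $1$ by the normalization \eqref{normalization}, so $H(1; u_\infty, v_\infty) = 1 > 0$; for general $r > 0$, the two-sided bound of Proposition \ref{doubling property} applied to $(u_R, v_R)$ (applicable because $N(\cdot; u_R, v_R) \le d$) passes to the limit and keeps $H(r; u_\infty, v_\infty)$ bounded away from zero on every compact subinterval of $(0, \infty)$. Combining the three convergences gives $N(r; u_\infty, v_\infty) = d$ for every $r > 0$. The only genuinely delicate point is the interaction term, where the prefactor $\kappa_R$ diverges while $u_R v_R$ forces the product to vanish: this is precisely what the strong convergence of Lemma \ref{lem H1 strong convergence} is designed to control, so once that lemma is in hand the corollary follows essentially by bookkeeping.
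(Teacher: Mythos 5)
Your proof is correct and follows essentially the same route as the paper's: the scaling identity $N(r;u_R,v_R)=N(Rr;u,v)$ together with Almgren monotonicity pins down the limit, Lemma~\ref{lem H1 strong convergence} (strong $H^{1,a}_{loc}$ convergence plus $\kappa_R u_R^2 v_R^2\to 0$ in $L^1_{loc}$) handles the numerator, and uniform $C^\alpha_{loc}$ convergence handles the denominator. One small attribution slip in your nonvanishing step: Proposition~\ref{doubling property} gives only the upper bound $H(r_2)/H(r_1)\le e^{d/(1-a)}(r_2/r_1)^{2d}$, so it yields a lower bound for $H$ only on $(0,1]$ after normalizing $H(1)=1$; the lower bound on $H$ for $r>1$ comes instead from $H$ being nondecreasing, which follows from $N\ge 0$ via \eqref{4.3} --- the paper itself defers the positivity $H_\infty(r)>0$ to the lemma immediately after this corollary.
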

\begin{proof}
For any fixed $r>0$, by Lemma \ref{lem H1 strong convergence},
\[\int_{B_r^+}y^a\left(|\nabla u_\infty|^2+|\nabla v_\infty|^2\right)=\lim_{R\to+\infty}\int_{B_r^+}y^a\left(|\nabla u_R|^2+|\nabla v_R|^2\right)
+\int_{\partial^0 B_r^+}\kappa_Ru_R^2v_R^2.\] By the uniform
convergence of $u_R$ and $v_R$, we also have
\[\int_{\partial^+B_r^+}y^a\left(u_\infty^2+v_\infty^2\right)=\lim_{R\to+\infty}\int_{\partial^+B_r^+}y^a\left(u_R^2+v_R^2\right).\]
Thus
\[N(r;u_\infty, v_\infty)=\lim_{R\to+\infty}N(r;u_R,v_R)=\lim_{R\to+\infty}N(Rr;u,v)=d.\qedhere\]
\end{proof}

For any $\eta\in C_0^\infty(\R^{n+1})$ nonnegative and even in $y$, multiplying the equation of $u_R$ by $\eta$ and integrating by parts, we obtain
\begin{equation}\label{Radon measure}
\int_{\partial\R^{n+1}_+}\eta\partial^a_yu_Rdx=\int_{\partial\R^{n+1}_+}\eta\kappa_Ru_Rv_R^2dx=\int_{\R^{n+1}_+}u_RL_a\eta,
\end{equation}
which is uniformly bounded as $R\to+\infty$. Hence we can assume that (up to a subsequence) $\partial^a_yu_Rdx=\kappa_Ru_Rv_R^2dx$ converges to a positive Radon measure $\mu$. On the other hand, passing to the limit in \eqref{Radon measure} gives $\mu=\partial^a_y u_\infty dx$. Here $\partial^a_y u_\infty\geq 0$ on $\partial\R^{n+1}_+$ in the weak sense, that is, $\partial^a_y u_\infty dx$ is a positive Radon measure on $\partial\R^{n+1}_+$.

\begin{lem}\label{lem limit equation 1}
The limit $(u_\infty,v_\infty)$ satisfies
\begin{equation}\label{limit equation 1}
\left\{
\begin{aligned}
 &L_a u_\infty=L_a v_\infty=0, \ \mbox{in}\ \R^{n+1}_+, \\
 &u_\infty\partial^a_y u_\infty=v_\infty\partial^a_y  v_\infty=0 \ \ \mbox{on}\ \partial\R^{n+1}_+,
                          \end{aligned} \right.
\end{equation}
\end{lem}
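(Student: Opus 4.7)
The plan is to split Lemma \ref{lem limit equation 1} into the interior equations (from weak convergence) and the boundary vanishing conditions (from the strong convergence in Lemma \ref{lem H1 strong convergence}).

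\textbf{Interior part.} Each $u_R$ satisfies $L_a u_R=0$ in $\R^{n+1}_+$, which in weak form reads $\int_{\R^{n+1}_+} y^a\nabla u_R\cdot\nabla\varphi=0$ for every $\varphi\in C_c^\infty(\R^{n+1}_+)$ supported away from the flat boundary. The weak $H^{1,a}_{\text{loc}}$ convergence established before the lemma lets us pass to the limit directly, giving $L_a u_\infty=L_a v_\infty=0$ in $\R^{n+1}_+$.

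\textbf{Boundary part.} For the boundary identity $u_\infty\partial^a_y u_\infty=0$ I would test the equation for $u_R$ against $\eta u_R$, where $\eta\in C_c^\infty(\R^{n+1})$ is nonnegative and even in $y$. An integration by parts together with the boundary condition $\partial^a_y u_R=\kappa_R u_R v_R^2$ gives
\begin{equation*}
\int_{\R^{n+1}_+} y^a\nabla u_R\cdot\nabla(\eta u_R)\,dxdy \;=\; \int_{\partial\R^{n+1}_+}\eta\,u_R\,\partial^a_y u_R\,dx \;=\; \int_{\partial\R^{n+1}_+}\eta\,\kappa_R u_R^2 v_R^2\,dx.
\end{equation*}
By Lemma \ref{lem H1 strong convergence} the rightmost integral tends to $0$. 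For the leftmost, the strong $H^{1,a}_{\text{loc}}$ convergence combined with the locally uniform convergence $u_R\to u_\infty$ from the $C^\alpha_{\text{loc}}$ bound yields the limit $\int y^a\nabla u_\infty\cdot\nabla(\eta u_\infty)\,dxdy$. Since $L_a u_\infty=0$ in $\R^{n+1}_+$, this limit equals $\int_{\partial\R^{n+1}_+}\eta\,u_\infty\,d(\partial^a_y u_\infty)$, where $\partial^a_y u_\infty\,dx$ is the positive Radon measure identified just before the lemma. Hence this integral vanishes for every nonnegative cutoff $\eta$, which is exactly $u_\infty\,\partial^a_y u_\infty=0$ as a Radon measure on $\partial\R^{n+1}_+$. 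The argument for $v_\infty\,\partial^a_y v_\infty=0$ is identical.

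\textbf{Main obstacle.} The one delicate point is justifying the integration by parts above with the non-smooth test function $\eta u_R$ (and afterwards with $\eta u_\infty$), since neither is a priori smooth up to the boundary — only $H^{1,a}_{\text{loc}}\cap C^\alpha_{\text{loc}}(\overline{\R^{n+1}_+})$. This is handled by a routine density argument, using the locally uniform total-variation bound on the boundary measures $\partial^a_y u_R\,dx=\kappa_R u_R v_R^2\,dx$ given by \eqref{Radon measure} to control the boundary traces. The essential analytic content is the \emph{strong} (not just weak) $H^{1,a}_{\text{loc}}$ convergence together with the $L^1_{\text{loc}}$ vanishing of $\kappa_R u_R^2 v_R^2$ — both furnished by Lemma \ref{lem H1 strong convergence}; without the strong convergence one could neither identify the limit of the Dirichlet integral on the left nor force the boundary term on the right to vanish.
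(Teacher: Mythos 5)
Your proof is correct and takes a genuinely different route from the paper. The paper works locally: around any boundary point $z_0$ with $u_\infty(z_0)>0$, it shows via Lemma~\ref{lem decay} that $v_R\le C\kappa_R^{-1}$ near $z_0$, then considers the conjugate function $w_R = y^a\partial_y u_R$ (which satisfies $\mathrm{div}(y^{-a}\nabla w_R)=0$), shows $w_R$ is uniformly bounded in $C^\beta$ up to the boundary using the boundary H\"older estimate of \cite{RS} together with the bound on $\partial^a_y u_R = \kappa_R u_R v_R^2$, and passes to the limit pointwise to get $\partial^a_y u_\infty = 0$ on a neighborhood of $z_0$. Your argument is instead a global one: you test against $\eta u_R$, observe the boundary term is controlled by $\int\eta\kappa_R u_R^2 v_R^2\to 0$ (by Lemma~\ref{lem H1 strong convergence}), and identify the limit of the Dirichlet form using strong $H^{1,a}_{loc}$ plus $C^0_{loc}$ convergence. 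Your route is shorter and needs less machinery (no Lemma~\ref{lem decay}, no \cite{RS} boundary regularity for the conjugate equation); the paper's route buys the extra information that $y^a\partial_y u_\infty$ is actually H\"older-continuous up to the part of the boundary where $u_\infty>0$ and vanishes there in the classical sense, rather than merely as a Radon measure. Since later uses of the lemma (e.g.\ \eqref{2}) only require the measure-theoretic statement $u_\infty\,d(\partial^a_y u_\infty)=0$, your conclusion is sufficient.

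Two small points. First, with the paper's conventions ($\partial^a_y u=\lim_{y\to 0^+}y^a\partial_y u$, outward normal on $\partial^0$ pointing in $-e_{n+1}$), the integration by parts reads
\begin{equation*}
\int_{\R^{n+1}_+} y^a\nabla u_R\cdot\nabla(\eta u_R)
= -\int_{\partial\R^{n+1}_+}\eta\,u_R\,\partial^a_y u_R
= -\int_{\partial\R^{n+1}_+}\eta\,\kappa_R u_R^2 v_R^2,
\end{equation*}
with a minus sign; your displayed identity omits it. This is harmless since the boundary term still tends to $0$, but should be fixed. Second, you rightly flag the density step needed to use $\eta u_R$ and $\eta u_\infty$ as test functions. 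Since $u_\infty\in H^{1,a}_{loc}\cap C^\alpha_{loc}(\overline{\R^{n+1}_+})$ and the weight $y^a$ is $A_2$, mollifying the even reflection of $\eta u_\infty$ gives approximants converging both in $H^{1,a}$ and uniformly, and uniform convergence suffices to pass the Radon-measure term $\int\phi_k\,d\mu\to\int\eta u_\infty\,d\mu$. Stating this explicitly would close the one remaining gap.
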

Here the second equation in \eqref{limit equation 1} is equivalent to the statement that the support of $\partial^a_y u_\infty dx$ belongs to $\{u_\infty=0\}$.
\begin{proof}
The first equation can be directly obtained by passing to the limit in $L_a u_R=L_a v_R=0$ and using the uniform convergence of $(u_R, v_R)$.

To prove the second one, take an arbitrary point $z_0=(x_0,0)\in \{u_\infty>0\}$. Since $u_\infty$ is continuous, we can find an $r_0>0$ and $\delta_0>0$ such that $u_\infty\geq 2\delta_0$ in $B_{r_0}^+(z_0)$.
By the segregated condition, $v_\infty(z_0)=0$. Thus by decreasing $r_0$ if necessary, we can assume that
\[v_\infty\leq\delta_0\ \ \mbox{in}\ \overline{B_{r_0}^+(z_0)}.\]
Then by the uniform convergence of $u_R$ and $v_R$, for all $R$ large,
\[
u_R\geq \delta_0, \ \ v_R\leq 2\delta_0\ \ \mbox{in}\ \overline{B_{r_0}^+(z_0)}.
\]
Thus
\[\partial^a_y v_R\geq \kappa_R\delta_0^2v_R\ \ \mbox{on}\ \partial B_{r_0}^+(z_0).\]
By applying Lemma \ref{lem decay}, we obtain
\[\sup_{\partial^0 B_{r_0/2}^+(z_0)}v_R\leq C(r_0,\delta_0)\kappa_R^{-1}.\]
Then because $\partial^a_y u_R=\kappa_Ru_Rv_R^2$, it is uniformly bounded in $C^\beta(\partial_0 B_{r_0}^+(z_0))$ for some $\beta>0$.

Let $w_R=y^a\frac{\partial u_R}{\partial y}$. It can be directly checked that $w_R$ satisfies (see \cite[Section 2.3]{C-S})
\[\mbox{div}\left(y^{-a}\nabla w_R\right)=0.\]

By \eqref{uniform H1 bound},
\[\int_{B_{r_0/2}^+(z_0)}y^{-a}w_R^2\leq\int_{B_{r_0/2}^+(z_0)}y^{a}|\nabla u_R|^2\]
are uniformly bounded. Then by the boundary H\"{o}lder estimate (\cite{RS}), $w_R$ are uniformly bounded in $C^{\beta}(\overline{B_{r_0/2}^+(z_0)})$. Because $w_R\geq 0$ on $\partial^0 B_{r_0}^+(z_0)$ and $w_Ru_R\to0$ in $L^1(\partial^0 B_{r_0}^+(z_0))$, by letting $R\to +\infty$ and using the uniform H\"{o}lder continuity of $u_R$ and $w_R$, we get
\[\partial^a_y u_\infty=0 \ \ \ \mbox{on}\ \partial^0B_{r_0/2}^+(z_0).\]

In the blow down procedure, we have shown that $u_\infty\in C^\alpha_{loc}(\overline{\R^{n+1}_+})$ for some $\alpha>0$. Hence $u_\infty$ is continuous on
$\partial \R^{n+1}_+$. The above argument also shows that $y^a\partial_y u_\infty$ is continuous up to
$\{u_\infty>0\}\cap\partial \R^{n+1}_+$ and $\partial^a_y u_\infty=0$ on $\{u_\infty>0\}\cap\partial \R^{n+1}_+$. This completes the proof.
\end{proof}

Integrating by parts using \eqref{limit equation 1}, we get
\begin{equation}\label{2}
\int_{\partial^+B_r^+}y^au_\infty\frac{\partial u_\infty}{\partial r}=\int_{B_r^+}y^a|\nabla u_\infty|^2, \ \
\int_{\partial^+B_r^+}y^av_\infty\frac{\partial v_\infty}{\partial r}=\int_{B_r^+}y^a|\nabla v_\infty|^2,
\end{equation}
for any ball $B_r^+$.

Let
\[E_\infty(r):=r^{1-n-a}\int_{B_r^+}y^a\left(|\nabla u_\infty|^2+|\nabla v_\infty|^2\right),\]
\[H_\infty(r):=r^{-n-a}\int_{\partial^+B_r^+}y^a\left(u_\infty^2+v_\infty^2\right),\]
and $N_\infty(r):=E_\infty(r)/H_\infty(r)$.

By \eqref{2} and calculating as in \eqref{4.1}, we still have
\begin{equation}\label{3}
\frac{d}{dr}\log H_\infty(r)=\frac{2N_\infty(r)}{r}.
\end{equation}
Since $N_\infty(r)\equiv d$, integrating this and by noting the normalization condition \eqref{normalization}, which passes to the limit, gives
\begin{equation}\label{4}
H_\infty(r)\equiv r^{2d}.
\end{equation}

The following lemma is essentially \cite[Propostion 2.11]{TVZ 2}.
\begin{lem}
For any $r\in(0,+\infty)$, $H_\infty(r)>0$ and $E_\infty(r)>0$. Moreover,
\begin{equation}\label{Almgren derivative limit}
\frac{1}{2}\frac{N_\infty^\prime(r)}{N_\infty(r)}\geq
\frac{\int_{\partial^+B_r^+}y^a\left(\big|\frac{\partial u_\infty}{\partial r}\big|^2+\big|\frac{\partial v_\infty}{\partial r}\big|^2\right)}{\int_{\partial^+B_r^+}y^a\left(u_\infty\frac{\partial u_\infty}{\partial r}+v_\infty\frac{\partial v_\infty}{\partial r}\right)}
-\frac{\int_{\partial^+B_r^+}y^a\left(u_\infty\frac{\partial u_\infty}{\partial r}+v_\infty\frac{\partial v_\infty}{\partial r}\right)}{\int_{\partial^+B_r^+}y^a\left(u_\infty^2+v_\infty^2\right)}\geq0,
\end{equation}
in the distributional sense.
\end{lem}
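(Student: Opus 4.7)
The strategy is to mimic the proof of the Almgren monotonicity formula (Proposition \ref{prop Almgren}) at the level of the blow-down limit $(u_\infty, v_\infty)$, whose limiting system \eqref{limit equation 1} is simpler than the original: the nonlinear $u^2v^2$ interaction is replaced by the segregation condition $u_\infty \partial_y^a u_\infty = v_\infty \partial_y^a v_\infty = 0$ on $\partial\R^{n+1}_+$. Three ingredients are needed: positivity of $H_\infty$ and $E_\infty$; a Pohozaev identity for $(u_\infty, v_\infty)$ obtained by passing to the limit in Lemma \ref{lem Pohozaev}; and the Cauchy--Schwarz inequality on $\partial^+ B_r^+$ with weight $y^a\,d\sigma$.

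\textbf{Positivity.} By \eqref{4} and Corollary \ref{coro limit homogeneous}, $H_\infty(r)=r^{2d}$ and $E_\infty(r)=d\,r^{2d}$, so both are positive once $d>0$. I would deduce $d>0$ by combining three facts. First, the coarse lower bound \eqref{coarse lower bound} gives $L(R)\geq C_\alpha R^\alpha$. Second, a direct scaling computation gives $H(R) = L(R)^2\int_{\partial^+ B_1^+} y^a(u_R^2+v_R^2)$, whose second factor converges to $H_\infty(1)=1$ by the uniform convergence of $(u_R,v_R)$ on $\partial^+ B_1^+$. Third, Proposition \ref{doubling property} gives $H(R)\leq C R^{2d}$ for $R\geq 1$. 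The resulting chain $C R^{2\alpha}\leq H(R)\leq C R^{2d}$ forces $d\geq\alpha>0$.

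\textbf{Limiting Pohozaev identity and differentiation.} Applying Lemma \ref{lem Pohozaev} to $(u_R,v_R)$ produces an identity whose right-hand side carries the extra terms $r\kappa_R\!\int_{S_r^n(0,0)} u_R^2 v_R^2$ and $-n\kappa_R\!\int_{\partial^0 B_r^+} u_R^2 v_R^2$. Using Lemma \ref{lem H1 strong convergence}, the interior integral converges by strong $H^{1,a}_{loc}$ convergence and the flat-boundary $u^2v^2$ term vanishes by the $L^1_{loc}(\partial\R^{n+1}_+)$ convergence $\kappa_R u_R^2 v_R^2 \to 0$. The gradient integrals on $\partial^+ B_r^+$ and the codimension-2 term on $S_r^n(0,0)$ converge for almost every $r>0$ by a Fubini/co-area argument applied to these same strong convergences. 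The outcome is, for a.e. $r>0$,
\[
(n-1+a)\int_{B_r^+} y^a\bigl(|\nabla u_\infty|^2+|\nabla v_\infty|^2\bigr) = r\int_{\partial^+ B_r^+} y^a\Bigl(|\nabla u_\infty|^2+|\nabla v_\infty|^2 -2|\partial_r u_\infty|^2-2|\partial_r v_\infty|^2\Bigr).
\]
Differentiating $E_\infty(r)$ and using this identity (exactly as in \eqref{4.2}) yields $E_\infty'(r) = \tfrac{2}{r^{n-1+a}}\int_{\partial^+ B_r^+} y^a(|\partial_r u_\infty|^2+|\partial_r v_\infty|^2)$, while \eqref{3} together with \eqref{2} gives both $H_\infty'(r) = \tfrac{2}{r^{n+a}}\int_{\partial^+ B_r^+} y^a(u_\infty\partial_r u_\infty + v_\infty\partial_r v_\infty)$ and $E_\infty(r) = r^{1-n-a}\int_{\partial^+ B_r^+} y^a(u_\infty\partial_r u_\infty + v_\infty\partial_r v_\infty)$. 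The logarithmic derivative identity $N_\infty'/N_\infty = E_\infty'/E_\infty - H_\infty'/H_\infty$ then reproduces the displayed equality, and non-negativity of its right-hand side is immediate from Cauchy--Schwarz with weight $y^a\,d\sigma$ applied to the two-component functions $(u_\infty,v_\infty)$ and $(\partial_r u_\infty,\partial_r v_\infty)$ on $\partial^+ B_r^+$.

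\textbf{Main obstacle.} The delicate step is the passage to the limit in the Pohozaev identity. Strong $H^{1,a}_{loc}$ convergence in the interior and $L^1_{loc}$ convergence on the flat boundary do not give pointwise-in-$r$ control on the sphere $\partial^+ B_r^+$ or on the codimension-2 slice $S_r^n(0,0)$; only a.e.-in-$r$ versions are accessible via Fubini/co-area. This is precisely why the differential inequality is asserted only in the distributional sense. Everything else---the logarithmic derivative manipulation, the Cauchy--Schwarz step, and the upgrade from a.e. to distributional---is routine.
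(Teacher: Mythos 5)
Your proof follows the same route as the paper's: apply the Pohozaev identity (Lemma \ref{lem Pohozaev}) to $(u_R,v_R)$, pass to the limit in $R$ using Lemma \ref{lem H1 strong convergence} to kill the $\kappa_R u_R^2 v_R^2$ terms, then differentiate $E_\infty$ and $H_\infty$ and conclude by Cauchy--Schwarz exactly as in Proposition \ref{prop Almgren}. The paper's proof is terse, and your explicit treatment of the positivity of $d$ (via \eqref{coarse lower bound}), of the a.e.-in-$r$ nature of the limiting Pohozaev identity, and of why the segregation condition \eqref{limit equation 1} removes the interaction terms from $E_\infty'$, supplies precisely the details the paper leaves implicit when it says ``the following calculation is similar to the proof of Proposition \ref{prop Almgren}.''
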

\begin{proof}
The Pohozaev identity for $(u_R,v_R)$ reads as
\begin{eqnarray*}
&&\left(n-1+a\right)\int_{B_r^+}y^a\left(|\nabla u_R|^2+|\nabla v_R|^2\right)\\
&=&r\int_{\partial^+B_r^+}y^a\left(|\nabla u_R|^2+|\nabla v_R|^2\right)-2y^a\left(\Big|\frac{\partial u_R}{\partial r}\Big|^2+\Big|\frac{\partial v_R}{\partial r}\Big|^2\right)\\
&&+r\int_{S_r^n}\kappa_Ru_R^2v_R^2-n\int_{\partial^0B_r^+}\kappa_Ru_R^2v_R^2.
\end{eqnarray*}

By Lemma \ref{lem H1 strong convergence}, for all but countable $r\in(0,+\infty)$, we can pass to the limit in the above identity, which gives
\begin{eqnarray}\label{Pohozaev limit}
&&\left(n-1+a\right)\int_{B_r^+}y^a\left(|\nabla u_\infty|^2+|\nabla v_\infty|^2\right)\\
&=&r\int_{\partial^+B_r^+}y^a\left(|\nabla u_\infty|^2+|\nabla v_\infty|^2\right)-2y^a\left(\Big|\frac{\partial u_\infty}{\partial r}\Big|^2+\Big|\frac{\partial v_\infty}{\partial r}\Big|^2\right). \nonumber
\end{eqnarray}
The following calculation is similar to the proof of Proposition \ref{prop Almgren}.
\end{proof}

\begin{lem}\label{lem limit homogeneous}
For any $\lambda>0$,
\[u_\infty(\lambda z)=\lambda^du_\infty(z),\ \ \ v_\infty(\lambda z)=\lambda^dv_\infty(z).\]
\end{lem}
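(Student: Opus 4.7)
The strategy is to exploit that $N_\infty(r)\equiv d$ by Corollary~\ref{coro limit homogeneous} to force equality in the Cauchy--Schwarz estimate underlying \eqref{Almgren derivative limit}, and then read off the desired homogeneity from this equality. Since $N_\infty(r)\equiv d$ we have $N_\infty'(r)=0$ (in the distributional sense), so the nonnegative right-hand side of \eqref{Almgren derivative limit} must vanish for a.e.~$r>0$. After clearing denominators (both of which are positive by the previous lemma), this vanishing becomes
\begin{equation*}
\left(\int_{\partial^+B_r^+}\!y^a\bigl(u_\infty\partial_r u_\infty+v_\infty\partial_r v_\infty\bigr)\right)^{\!2}=\int_{\partial^+B_r^+}\!y^a(u_\infty^2+v_\infty^2)\cdot\int_{\partial^+B_r^+}\!y^a\bigl(|\partial_r u_\infty|^2+|\partial_r v_\infty|^2\bigr),
\end{equation*}
which is precisely the equality case of the Cauchy--Schwarz inequality applied to the two $\R^2$-valued functions $(u_\infty,v_\infty)$ and $(\partial_r u_\infty,\partial_r v_\infty)$ in the Hilbert space $L^2(\partial^+B_r^+,y^a\,d\sigma;\R^2)$. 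Hence there exists a scalar $\lambda(r)$, independent of the point on the half-sphere, such that $\partial_r u_\infty(z)=\lambda(r)u_\infty(z)$ and $\partial_r v_\infty(z)=\lambda(r)v_\infty(z)$ for a.e.~$z\in\partial^+B_r^+$.

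Next I would identify $\lambda(r)$ explicitly. Using the integration by parts identity \eqref{2} together with the definitions of $E_\infty$ and $H_\infty$, the relation $N_\infty\equiv d$, and the normalization $H_\infty(r)=r^{2d}$ from \eqref{4}, one has
\begin{equation*}
\int_{\partial^+B_r^+}\!y^a\bigl(u_\infty\partial_r u_\infty+v_\infty\partial_r v_\infty\bigr)=\int_{B_r^+}\!y^a\bigl(|\nabla u_\infty|^2+|\nabla v_\infty|^2\bigr)=r^{n-1+a}E_\infty(r)=d\,r^{n-1+a+2d}.
\end{equation*}
On the other hand the parallel condition gives the same integral as $\lambda(r)\int_{\partial^+B_r^+}y^a(u_\infty^2+v_\infty^2)=\lambda(r)r^{n+a+2d}$. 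Comparing yields $\lambda(r)=d/r$ for a.e.~$r$.

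Therefore $\partial_r u_\infty(r\omega)=(d/r)u_\infty(r\omega)$ for a.e.~$(r,\omega)\in(0,\infty)\times S^n_+$, and similarly for $v_\infty$. Integrating this radial ODE along rays (and likewise for $v_\infty$) produces $u_\infty(\lambda z)=\lambda^d u_\infty(z)$ and $v_\infty(\lambda z)=\lambda^d v_\infty(z)$ for every $\lambda>0$, which is the claim. The main technical hurdle is that $u_\infty,v_\infty$ belong a~priori only to $H^{1,a}_{\mathrm{loc}}(\R^{n+1}_+)$, so the pointwise parallel condition and the radial ODE hold only after choosing suitable representatives and only for a.e.~$r$; however, this is easily overcome because $u_\infty,v_\infty$ are H\"older continuous in $\overline{\R^{n+1}_+}$ (by the uniform convergence used in Lemma~\ref{lem H1 strong convergence}), so the integrated homogeneity relation first obtained on a full-measure set of radii extends by continuity to all $\lambda>0$ and all $z$.
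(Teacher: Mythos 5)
Your proposal is correct and follows essentially the same argument as the paper: force equality in the Cauchy--Schwarz inequality from $N_\infty'\equiv 0$, extract the radial parallel condition $\partial_r u_\infty=\lambda(r)u_\infty$, $\partial_r v_\infty=\lambda(r)v_\infty$, and integrate. The only cosmetic difference is that you identify $\lambda(r)=d/r$ explicitly before integrating, whereas the paper integrates to get $u_\infty=g(r)\varphi(\theta)$, $v_\infty=g(r)\psi(\theta)$ with $g$ unknown and then pins down $g(r)=r^d$ directly from the normalization $H_\infty(r)\equiv r^{2d}$ in \eqref{4}.
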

\begin{proof}
By Corollary \ref{coro limit homogeneous}, $N_\infty(r)\equiv d$. Then by the previous lemma, for a.a. $r>0$,
\[\frac{\int_{\partial^+B_r^+}y^a
\left(\big|\frac{\partial u_\infty}{\partial
r}\big|^2+\big|\frac{\partial v_\infty}{\partial r}\big|^2\right)}
{\int_{\partial^+B_r^+}y^a \left(u_\infty\frac{\partial
u_\infty}{\partial r}+v_\infty\frac{\partial v_\infty}{\partial
r}\right)}
-\frac{\int_{\partial^+B_r^+}y^a\left(u_\infty\frac{\partial
u_\infty}{\partial r}+v_\infty\frac{\partial v_\infty}{\partial
r}\right)}{\int_{\partial^+B_r^+}y^a\left(u_\infty^2+v_\infty^2\right)}=0.\]
By the characterization of the equality case in the Cauchy
inequality, there exists a $\lambda(r)>0$, such that
\[\frac{\partial u_\infty}{\partial r}=\lambda(r)u_\infty,\ \ \
\frac{\partial v_\infty}{\partial r}=\lambda(r)v_\infty\ \ \
\mbox{on}\ \partial^+B_r^+.\] Integrating this in $r$, we then get
two functions $g(r)$ defined on $(0,+\infty)$ and
$(\varphi(\theta),\psi(\theta))$ defined on $\partial^+B_1^+$, such
that
\[u_\infty(r,\theta)=g(r)\varphi(\theta),\ \ \ \ v_\infty(r,\theta)=g(r)\psi(\theta).\]
By \eqref{4}, we must have $g(r)\equiv r^d$.
\end{proof}

\section{Classification of the blow down limit in dimension $2$}
\setcounter{equation}{0}

Now assume $n=1$. In the previous section we proved that the blow down limit
\[u_\infty(r,\theta)=r^d\phi(\theta),\ \ \ v_\infty(r,\theta)=r^d\psi(\theta),\]
where the two functions $\phi$ and $\psi$ are defined on $[0,\pi]$.

By denoting
\[L_\theta^a\varphi=\varphi_{\theta\theta}+a\cot\theta\varphi_\theta,\]
the equation for $(\varphi,\psi)$ reads as
\begin{equation}\label{equation on sphere}
\left\{
\begin{aligned}
 &L_\theta^a\varphi+d(d+a)\varphi=L_\theta^a\psi+d(d+a)\psi=0, \ \mbox{in}\ (0,\pi), \\
 &\varphi\partial^a_\theta\varphi=\psi\partial^a_\theta\psi=0, \ \mbox{at}\ \{0,\pi\},\\
 &\varphi(0)\psi(0)=\varphi(\pi)\psi(\pi)=0.
                          \end{aligned} \right.
\end{equation}
Here $\partial^a_\theta\phi(0)=\lim_{\theta\to 0}\left(\sin\theta\right)^a\varphi_\theta(\theta)$, and we have a similar one at $\pi$.

First we note that if $\varphi (0) \not =0, \varphi (\pi) \not = 0$, then $\psi(0)=\psi(\pi)=0$. We claim that $\psi \equiv 0$. In fact,  since $v_\infty$ is homogeneous of degree $2s-1$ and $L_a$-harmonic in $\R^2_+$, by \cite[Proposition 3.1]{TVZ 2}, $v_\infty\equiv 0$ in $\R^2_+$. Thus for nontrivial solutions, we must have either $ \varphi (0)\not =0, \varphi (\pi)=0$ or $ \varphi (0)=0, \varphi (\pi ) \not =0$.

There are exact nontrivial solutions to (\ref{equation on sphere}):
\begin{itemize}
\item When $d=s$, $(\varphi,\psi)=((\cos\frac{\theta}{2})^{2s},(\sin\frac{\theta}{2})^{2s})$.
\item When $s>1/2$, there is a second solution $(\varphi,\psi)=(1,0)$. This corresponds to $d=2s-1=-a$ and $(u_\infty,v_\infty)=((x^2+y^2)^{2s-1},0)$.
\end{itemize}
By Corollary \ref{coro limit homogeneous}, either $\lim_{R\to+\infty}N(R)=s$ or $\lim_{R\to+\infty}N(R)=2s-1$ (when $s>1/2$).
Thus the blow down limit can only be one of the above two, independent of subsequences of $R\to+\infty$.

\subsection{Self-segregation} Here we exclude the possibility that the blow down limit $(\varphi,\psi)=(1,0)$ when $s>1/2$.

Assume the blow down limit $(\varphi,\psi)=(1,0)$.
First we claim that
\begin{lem}\label{lem 6.1}
There exists a constant $c>0$ such that
\[u\geq c \ \ \mbox{on}\ \partial\R^2_+.\]
\end{lem}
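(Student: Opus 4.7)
The idea is to combine the assumed blow-down profile $(1,0)$ with the standing positivity and continuity of $u$ on $\partial\R^2_+$: the blow-down forces $u$ to grow along the boundary, so positivity on compact intervals closes the gap and gives a uniform positive lower bound.

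By the dichotomy recorded immediately before this subsection, once $s>1/2$ the blow-down profile is one of two possibilities and is independent of the subsequence. Under the standing assumption it equals $(1,0)$, so the full family $(u_R,v_R)$ converges to $(u_\infty,v_\infty)=(r^{2s-1},0)$ in $C^\alpha_{loc}(\overline{\R^2_+})$ as $R\to+\infty$. Restricting this uniform-on-compacts convergence to $\partial\R^2_+$ yields
\[
\sup_{|x|\leq 2}\bigl|\,L(R)^{-1}u(Rx,0)-|x|^{2s-1}\,\bigr|\longrightarrow 0,\qquad R\to+\infty.
\]
Since $|x|^{2s-1}\geq 1$ for $|x|\in[1,2]$, I would conclude that $u(Rx,0)\geq \tfrac{1}{2}L(R)$ for all $|x|\in[1,2]$ and $R$ large. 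Letting $R$ vary, the points $Rx$ cover every $y\in\R$ with $|y|$ bounded below by any preassigned constant, and $L(R)\geq C_\alpha R^\alpha\to+\infty$ by \eqref{coarse lower bound}. Chaining dyadic annuli $[R,2R]\cup[-2R,-R]$ therefore produces a radius $R_0>0$ such that
\[
u(y,0)\geq 1\qquad\text{for all }|y|\geq R_0.
\]

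For the complementary bounded range, $u$ is continuous on $\overline{\R^2_+}$ by the uniform Hölder regularity recalled in Section 4, and $u>0$ on $\R$ by the standing hypothesis in \eqref{1p}. Hence $u(\,\cdot\,,0)$ attains a positive minimum $c_1>0$ on the compact interval $[-R_0,R_0]$, and the lemma follows with $c:=\min(1,c_1)$.

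The only subtle step is ensuring the $C^\alpha_{loc}$ convergence of $(u_R,v_R)$ reaches all the way to $\partial\R^2_+$, so that we may legitimately evaluate at $y=0$, and that the limit is shared by every subsequence; both are already built into the framework of Section 4 and the two-element classification established at the start of this section. Once these are in hand, the remainder is a purely compactness-and-continuity argument with no genuine obstacle.
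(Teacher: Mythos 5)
Your argument is correct and rests on exactly the same pillars as the paper's: the convergence of the blow-down family to a positive multiple of $(r^{2s-1},0)$ uniformly on compact subsets of $\overline{\R^2_+}$, and the divergence $L(R)\to+\infty$ from \eqref{coarse lower bound}. The paper phrases it as a contradiction (a sequence with $u(R_i,0)\to0$ would force $u_{R_i}(1,0)\to0$, contradicting the positive blow-down limit), whereas you run it directly, obtaining $u\gtrsim L(|y|)$ for large $|y|$ and closing the compact range by continuity and positivity; both are equivalent and neither introduces a new idea. One small cosmetic point: the blow-down limit is $c\,r^{2s-1}$ for a fixed normalization constant $c>0$ rather than $r^{2s-1}$ itself, but since only $c>0$ is used, this does not affect the argument.
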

\begin{proof}
Assume that we have a sequence $R_i$ such that $u(R_i,0)\to0$. Then necessarily $R_i\to\infty$.
Let $(u_{R_i},v_{R_i})$ be the blow down sequence defined as before. Then $(u_{R_i},v_{R_i})$ converges to $(r^{2s-1},0)$ (modulo a normalization constant) in $C_{loc}(\overline{\R^2_+})$. However, by our assumption, because $L(R_i)\to+\infty$ (see \eqref{coarse lower bound}),
\[u_{R_i}(1,0)=\frac{u(R_i,0)}{L(R_i)}\to0,\]
which is a contradiction.
\end{proof}

By the bound on $N(R)$ and Proposition \ref{doubling property}, there exists a constant $C$ such that
\[\int_{B_r^+}y^a\left(u^2+v^2\right)\leq Cr^{2+2(2s-1)}\ \ \forall\ r>1.\]

For each $r>1$, let
\[\tilde{u}(z)=u(rz),\ \ \ \tilde{v}(z)=v(rz).\]
Then $\tilde{v}$ satisfies
\begin{equation*}
\left\{
\begin{aligned}
 &L_a\tilde{v}=0, \ \mbox{in}\ B_1^+, \\
 &\partial^a_y\tilde{v}=r^{1-a}\tilde{u}^2\tilde{v}\geq cr^{1-a}\tilde{v}, \ \ \mbox{on}\ \partial^0 B_1^+.
                          \end{aligned} \right.
\end{equation*}
Here we have used the previous lemma which says $\tilde{u}\geq c$ on $\partial B_1^+$.

Applying Lemma \ref{lem decay}, we obtain
\[\sup_{\partial^0B_{1/2}^+}\tilde{v}\leq Cr^{-1}.\]
Letting $r\to+\infty$, we see $v\equiv 0$ on $\partial\R^2_+$.

Now since the growth bound of $v$ is controlled by $r^{2s-1}$, applying \cite[Proposition 3.1]{TVZ 2}, we get $v\equiv 0$ in $\R^2_+$.

The equation for $u$ becomes
\begin{equation*}
\left\{
\begin{aligned}
 &L_au=0, \ \mbox{in}\ \R^2_+, \\
 &\partial^a_y u=0, \ \ \mbox{on}\ \partial \R^2_+.
                          \end{aligned} \right.
\end{equation*}
Because the growth bound of $u$ is controlled by $r^{2s-1}$, applying \cite[Corollary 3.3]{TVZ 2}, $u$ is a constant. This is a contradiction with the condition on $N(R)$.

\subsection{}
We have proved that the blow down limit must be
\[u_\infty=a_+r^s(\cos\frac{\theta}{2})^{2s},\ \ \ v_\infty=a_-r^s(\sin\frac{\theta}{2})^{2s},\]
for two suitable positive constants $a_+$ and $a_-$.

Here we note that for any $R\to+\infty$, the blow down sequence
could also be
\[u_\infty=b_+r^s(\sin\frac{\theta}{2})^{2s},\ \ \ v_\infty=b_-r^s(\cos\frac{\theta}{2})^{2s}.\]
However by continuity, only one of them is possible and the blow
down limit must be unique (the constant $a_+$ and $a_-$ will be shown to be independent of the choice of subsequences $R_i\to+\infty$ in the next section). For example, if both these two arise as the blow
down limit (from different subsequence of $R\to+\infty$), then we
can find a sequence of $R_i\to+\infty$ satisfying
$u(R_i,0)=v(R_i,0)$. Using these $R_i$ to define the blow down
sequence, we get a blow down limit $(u_\infty,v_\infty)$ satisfying
$u_\infty(1,0)=v_\infty(1,0)$. This is a contradiction with the two
forms given above.

\begin{lem}\label{lem limit equation 2}
$a_+=a_-$.
\end{lem}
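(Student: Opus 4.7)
The plan is to derive a Pohozaev-type identity from a divergence-free vector field associated with $x$-translations, pass to the blow-down limit to extract a multiple of $(a_+^2-a_-^2)$, and then show the remaining boundary term vanishes via a decay estimate for the minority species at infinity.

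First I would introduce the vector field
\[P := y^a(\partial_x u)\nabla u + y^a(\partial_x v)\nabla v - \tfrac{1}{2}y^a\bigl(|\nabla u|^2+|\nabla v|^2\bigr)\hat{e}_x\]
on $\R^2_+$. Using $L_au = L_av = 0$ and the $x$-independence of $y^a$, a direct calculation gives $\mathrm{div}\,P\equiv 0$. Applying the divergence theorem on $B_R^+$ and using $\partial^a_yu = uv^2$ and $\partial^a_yv = u^2v$ to evaluate the flat-boundary flux yields
\[\int_{\partial^+B_R^+}y^a\!\left[(\partial_ru)(\partial_xu)+(\partial_rv)(\partial_xv)-\tfrac{\nu_x}{2}(|\nabla u|^2+|\nabla v|^2)\right]dS = \tfrac{1}{2}\bigl[u^2v^2\bigr]_{x=-R}^{x=R}.\]
This is the $L_a$-extension analogue of the Hamiltonian conservation $((u')^2+(v')^2-u^2v^2)'\equiv 0$ used in \cite{BTWW} when $s=1$.

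Next I would pass to the limit as $R\to\infty$. Writing the blow-down in the compact form $u_\infty = 2^{-s}a_+(r+x)^s$ and $v_\infty = 2^{-s}a_-(r-x)^s$, one checks that on $\partial^+B_1^+$ the identities
\[\partial_xu_\infty = \partial_ru_\infty = s\,u_\infty,\qquad \partial_xv_\infty = -\partial_rv_\infty = -s\,v_\infty\]
hold, so the cross term $(\partial_ru_\infty)(\partial_xu_\infty)$ contributes $+s^2u_\infty^2$ while $(\partial_rv_\infty)(\partial_xv_\infty)$ contributes $-s^2v_\infty^2$; this sign asymmetry is the mechanism by which the identity detects the difference between $a_+$ and $a_-$. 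Strong $H^{1,a}_{loc}$ convergence $(u_R,v_R)\to(u_\infty,v_\infty)$ from Lemma \ref{lem H1 strong convergence}, combined with a Fubini argument to extract radii along which the boundary traces converge in $H^1(\partial^+B_1^+)$, permits passing to the limit in the rescaled version of the Pohozaev identity. Evaluating the resulting beta-function integrals gives
\[\lim_{R\to\infty}\int_{\partial^+B_R^+}y^a[\cdots]\,dS = s^2(a_+^2-a_-^2)\cdot\frac{2^{-2s}\pi}{\sin(\pi s)}.\]

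Finally I would show the right-hand side vanishes, i.e.\ $u(\pm R,0)v(\pm R,0)\to 0$. Near $x=R\to+\infty$ the blow-down forces $u\gtrsim cR^s$ on a fixed rescaled neighborhood of $(R,0)$, so the boundary equation $\partial^a_yv = u^2v$ there has an absorbing coefficient of order $R^{2s}$; the appendix decay lemma (Lemma \ref{lem decay}) then yields a quantitative pointwise bound $v(R,0)\le CR^{-\gamma}$ with $\gamma>s$, whence $uv\to 0$ at $+\infty$. The symmetric argument handles $x\to-\infty$. Combining everything gives $s^2(a_+^2-a_-^2)\cdot 2^{-2s}\pi/\sin(\pi s) = 0$, and since the multiplicative constant is strictly positive, $a_+=a_-$. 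The main obstacle will be establishing the quantitative decay rate $\gamma>s$ for the minority species (sharper than the $o(R^s)$ implied by mere blow-down convergence), and carefully justifying the passage to the limit in boundary integrals with only interior $H^{1,a}_{loc}$ convergence.
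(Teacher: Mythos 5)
Your underlying idea --- the divergence-free stress tensor $P$ associated with $x$-translations, i.e.\ the $L_a$-analogue of the Hamiltonian identity --- is exactly what the paper has in mind when it says to apply the Pohozaev identity \eqref{Pohozaev limit} on $B_r^+(t,0)$ and vary $t$ (differentiating the shifted dilation multiplier $(z-(t,0))\cdot\nabla$ in $t$ produces the translation multiplier $-\partial_x$). Your sign bookkeeping and the beta-function evaluation $K=\int_0^\pi(\sin\theta)^a(1+\cos\theta)^{2s-1}\,d\theta=B(s,1-s)=\pi/\sin(\pi s)$ are both correct. The difference is \emph{which pair} you apply the conservation law to, and this is where the gap lies.

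The paper applies the identity directly to the blow-down limit $(u_\infty,v_\infty)$, where the coupling $u_\infty v_\infty\equiv 0$ on $\partial\R^2_+$ kills the flat-boundary flux identically, so the spherical flux $I_1^{(t)}(r):=\int_{\partial^+B_r^+(t,0)}P\cdot\nu$ is exactly constant in both $r$ and $t$; sending $t\to\infty$ with $r$ fixed shows the constant is $0$, and evaluating at $t=0$ gives $s^2 2^{-2s}(a_+^2-a_-^2)K=0$. You instead apply it to $(u,v)$ and take $R\to\infty$. Your limit
\[\lim_{R\to\infty}\int_{\partial^+B_R^+}y^a[\cdots]\,dS = s^2(a_+^2-a_-^2)\cdot\frac{2^{-2s}\pi}{\sin(\pi s)}\]
silently assumes that $R^{-s}u(Rz)\to a_+2^{-s}(r+x)^s$ in $H^1$ on $\partial^+B_1^+$, i.e.\ that the Almgren normalization satisfies $L(R)\sim cR^s$ for a positive constant, equivalently $H(R)\gtrsim R^{2s}$. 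At the point in the paper where Lemma~\ref{lem limit equation 2} is proved, one only knows $N(r)\nearrow s$, hence $H(R)\lesssim R^{2s}$ and $H(R)\gtrsim R^{2s-\varepsilon}$ for every $\varepsilon>0$ --- but a genuine lower bound $H(R)\gtrsim R^{2s}$ is not yet available. Without it, $I(R)=L(R)^2R^{a-1}I_1(u_R,v_R)$ could tend to $0$ simply because $L(R)^2R^{-2s}\to 0$, and you cannot extract $a_+^2-a_-^2=0$. The decay estimate you flag as the main obstacle is actually fine as you sketched it (Lemma~\ref{lem decay} gives $u(R,0)v(R,0)\lesssim R^{-2s}$ with the coarse bounds already in hand); the missing piece is the normalization rate.

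This gap is fixable, but not trivially: the sharp lower bound $H(R)\gtrsim R^{2s}$ is the content of Proposition~\ref{lower bound}, whose proof via the Alt--Caffarelli--Friedman monotonicity formula does \emph{not} actually require Lemma~\ref{lem limit equation 2} for the combined quantity $\int_{\partial^+B_r^+}y^a(u^2+v^2)$ (only the final split into separate bounds for $u$ and $v$ uses \eqref{balance between u and v}). So one could reorder: establish the ACF lower bound first, then run your argument. But you would need to note and justify this reordering explicitly; as written, your proposal leans on a fact proved only after --- and downstream of --- the lemma it is meant to establish. The paper's choice to work with $(u_\infty,v_\infty)$ directly is precisely what sidesteps both the boundary-decay computation and this normalization issue.
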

This can be proved by the Pohozaev identity for $(u_\infty,v_\infty)$, \eqref{Pohozaev limit}, where we replace the ball
$B_r^+$ by $B_r^+(t,0)$ and let $t$ vary.

We have proved that the blow down limit $(u_\infty,v_\infty)$
satisfies
\[\int_{\partial^+B_1^+}y^au_\infty^2=\int_{\partial^+B_1^+}y^av_\infty^2.\]
By the convergence of the blow down limit, we get a constant $C$ so
that for all $R\geq 1$,
\begin{equation}\label{balance between u and v}
\frac{1}{C}\leq\frac{\int_{\partial^+B_R^+}y^au^2}{\int_{\partial^+B_R^+}y^av^2}\leq
C.
\end{equation}

\section{Growth bound}
\setcounter{equation}{0}

\begin{prop}[{\bf Upper bound}]\label{upper bound}
There exists a constant $C$ so that
\[u(z)+v(z)\leq C\left(1+|z|\right)^s.\]
\end{prop}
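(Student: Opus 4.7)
The plan is to upgrade the integral growth bound supplied by the Almgren--doubling machinery to a pointwise bound using the subharmonicity of the even extension. The classification of blow-down limits in Section 5 shows that $\lim_{R\to\infty}N(R)=s$, and by the Almgren monotonicity formula (Proposition \ref{prop Almgren}) the function $N$ is non-decreasing, so
\[
N(R)\le s,\qquad\forall\,R>0.
\]
This qualitative input is the only place the results of Sections 4 and 5 enter.

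Feeding $d=s$ into the doubling property (Proposition \ref{doubling property}) yields
\[
H(r)\le C\, r^{2s},\qquad\forall\, r\ge 1.
\]
Unwinding the definition of $H$ and integrating the resulting spherical bound in the radial direction gives
\[
\int_{B_r^+} y^a\left(u^2+v^2\right)\le C\, r^{2+a+2s},\qquad\forall\, r\ge 1.
\]

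The final step is to promote this $L^2$-with-weight bound to a sup bound. Since $\partial_y^a u=uv^2\ge 0$ and $\partial_y^a v=vu^2\ge 0$ on $\partial\R^2_+$, the even reflections of $u$ and $v$ across $\{y=0\}$ are nonnegative $L_a$-subharmonic on $\R^2$. Applying the sub-mean-value estimate Lemma \ref{lem sup bound} (used already in Sections 2 and 4) to balls $B_r(z_0)$ with $z_0\in\overline{\R^2_+}$ and $|z_0|\le r/2$ gives, for $r\ge 1$,
\[
\sup_{B_{r/2}^+(z_0)}(u+v)\le C\left(\frac{1}{r^{2+a}}\int_{B_r(z_0)} y^a\left(u^2+v^2\right)\right)^{1/2}\le C\, r^s.
\]
For $|z|\le 1$ the bound is trivial by continuity of $(u,v)$ on compacts, so combining the two regimes yields $u(z)+v(z)\le C(1+|z|)^s$ as desired.

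I do not anticipate a genuine obstacle here: the main content was already packaged in Sections 3--5, and the proof is just a clean chaining of (i) Almgren monotonicity with the blow-down-given limit $d=s$, (ii) the doubling Proposition, and (iii) the subharmonic sup estimate applied to the even extension. The mild bookkeeping issue is that the doubling inequality is stated for $r\le R$ with some fixed reference scale; one needs a small-radius companion bound coming from local regularity (or just boundedness of $(u,v)$ on $B_2^+$) to get a bound on all of $\R^2_+$. The sharpness of the exponent $s$ is a direct consequence of the fact that the unique blow-down profile is homogeneous of degree exactly $s$.
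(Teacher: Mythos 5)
Your proof is correct and follows essentially the same route as the paper: use $\lim_{R\to\infty}N(R)=s$ together with Almgren monotonicity to get $N\le s$ everywhere, feed $d=s$ into the doubling Proposition to bound $H(r)\le Cr^{2s}$, and then convert to a pointwise bound via the sub-mean-value Lemma \ref{lem sup bound} applied to the even ($L_a$-subharmonic) extension. If anything, you spell out the intermediate step (integrating the spherical bound $H(r)$ to obtain the solid-ball weighted $L^2$ bound needed by Lemma \ref{lem sup bound}) a bit more explicitly than the paper does.
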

\begin{proof}
Because for any $r$, $N(r)\leq s$. Proposition \ref{doubling property} implies that
\[H(r)\leq H(1)r^{2s},\ \ \ \forall r>1.\]
Then because the even extension of $u$ to $\R^2$ is $L_a$-subharmonic, by Lemma \ref{lem sup bound} we get
\[\sup_{B_{r/2}}u\leq CH(r)^{1/2}\leq CH(1)^{1/2}r^s.  \qedhere\]
\end{proof}

Because for any $R>0$, $N(R)\leq s$, the bound on $H(r)$ also gives
\begin{coro}\label{bound on energy}
For any $R>1$,
\[\int_{B_R^+}y^a\left(|\nabla u|^2+|\nabla v|^2\right)+\int_{\partial^0B_R^+}u^2v^2\leq CR.\]
\end{coro}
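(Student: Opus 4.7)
The plan is to read off this bound directly from the Almgren monotonicity machinery already established, together with the upper bound on $H(r)$ used in the proof of Proposition~\ref{upper bound}. The key observation is dimensional: in the one-dimensional case we have $n=1$ and $a=1-2s$, so the normalization factor in front of the Dirichlet-plus-interaction energy is $r^{n-1+a}=r^{1-2s}$, and once combined with the $r^{2s}$ growth bound on $H(r)$ the exponents add to exactly $1$.

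More concretely, first I would recall that by the classification of the blow-down limit in Section~5 and the monotonicity of $N(r)$ we have $N(r)\le s$ for every $r>0$. By Proposition~\ref{doubling property} applied with $d=s$ this gives
\[H(r)\le H(1)\,r^{2s},\qquad \forall r>1.\]
(This is the same input already used in the proof of Proposition~\ref{upper bound}.) Since $N(r)\le s$ means $E(r)\le s\,H(r)$, we combine these to obtain
\[E(R)\le s\,H(1)\,R^{2s},\qquad \forall R>1.\]

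Finally, I would unwind the definition of $E(R)$. With $n=1$,
\[\int_{B_R^+}y^a\left(|\nabla u|^2+|\nabla v|^2\right)+\int_{\partial^0 B_R^+}u^2v^2 \;=\; R^{n-1+a}\,E(R)\;=\;R^{a}\,E(R),\]
so the previous inequality together with $a=1-2s$ yields
\[\int_{B_R^+}y^a\left(|\nabla u|^2+|\nabla v|^2\right)+\int_{\partial^0 B_R^+}u^2v^2\le s\,H(1)\,R^{a+2s}=s\,H(1)\,R,\]
as claimed. There is no substantive obstacle here; the only thing to check carefully is the bookkeeping of the exponent $a+2s=1$, which is precisely the reason the right-hand side is linear in $R$.
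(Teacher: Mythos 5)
Your proof is correct and is exactly the argument the paper intends (the paper states the corollary with only the terse remark that it follows from $N(R)\le s$ and the $H(r)$ bound): you combine $E(R)=N(R)H(R)\le s\,H(R)$ with $H(R)\le H(1)R^{2s}$ and then unwind the normalization $R^{n-1+a}=R^a$, noting $a+2s=1$ when $n=1$.
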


Next we give a lower bound for the growth of $u$ and $v$.
\begin{prop}[{\bf Lower bound}]\label{lower bound}
There exists a constant $c$ such that
\begin{equation}\label{sharp lower bound}
\int_{\partial^+B_r^+}y^au^2\geq cr^2,\ \ \ \int_{\partial^+B_r^+}y^av^2\geq cr^2, \ \ \ \forall \ r>1.
\end{equation}
\end{prop}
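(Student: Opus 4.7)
By the balance estimate \eqref{balance between u and v}, the two inequalities in the statement are equivalent, and (since $1-a=2s$) both reduce to showing $H(R)\ge c R^{2s}$ for all $R>1$.

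I would argue by contradiction. Suppose $H(R_k)/R_k^{2s}\to 0$ along some sequence $R_k\to\infty$. Since $N(r)\le s$ throughout (established in Section 5), Proposition \ref{doubling property} applied at $R=R_k$ yields
\[
\frac{H(r)}{r^{2s}}\le e^{s/(1-a)}\,\frac{H(R_k)}{R_k^{2s}}\quad\text{for all }r\ge R_k,
\]
so in fact $H(R)/R^{2s}\to 0$ as $R\to\infty$. Introduce the alternative rescaling
\[
\hat u_R(z) := R^{-s}u(Rz),\qquad \hat v_R(z) := R^{-s}v(Rz),
\]
which by Proposition \ref{upper bound} is uniformly bounded on compact subsets of $\overline{\R^2_+}$; the uniform H\"older estimates of \cite{TVZ} then give precompactness in $C^\alpha_{loc}(\overline{\R^2_+})$. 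Repeating the blow-down machinery of Section 4 (using $N(r;\hat u_R,\hat v_R)=N(Rr;u,v)\to s$), any subsequential locally uniform limit $(\hat u_\infty,\hat v_\infty)$ is $s$-homogeneous, $L_a$-harmonic in $\R^2_+$, and satisfies the segregation condition on $\partial\R^2_+$. By the classification of Section 5 the only admissible form is
\[
\hat u_\infty = b\, r^s\bigl(\cos\tfrac{\theta}{2}\bigr)^{2s},\qquad \hat v_\infty = b\, r^s\bigl(\sin\tfrac{\theta}{2}\bigr)^{2s}
\]
for some $b\ge 0$. A direct change of variables identifies $\int_{\partial^+ B_1^+} y^a(\hat u_R^2+\hat v_R^2)=R^{-2s}H(R)\to 0$, so by uniform convergence on $\partial^+B_1^+$ we read off $b=0$; the $s$-homogeneity then forces $\hat u_\infty\equiv\hat v_\infty\equiv 0$, and hence $\hat u_R,\hat v_R\to 0$ locally uniformly on $\overline{\R^2_+}$.

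The contradiction is to be extracted by comparing against the uniqueness of the normalized blow-down $(u_R,v_R)\to(u_\infty,v_\infty)$ with $u_\infty = a_\infty r^s(\cos\tfrac{\theta}{2})^{2s}$, $a_\infty>0$ fixed by the normalization condition (Section 5.2). Writing $\hat u_R=(L(R)/R^s)\,u_R$ and evaluating at $z_0=(1,0)$ where $u_\infty(z_0)=a_\infty>0$, the convergences $u_R(1,0)\to a_\infty$ and $\hat u_R(1,0)\to 0$ together force $L(R)/R^s\to 0$. To convert this into a hard contradiction I would apply the Pohozaev identity (Lemma \ref{lem Pohozaev}) to $(\hat u_R,\hat v_R)$ on a fixed half-ball $B_r^+$ with $r\in[1,2]$: a localized version of the strong $H^{1,a}_{loc}$-convergence of Lemma \ref{lem H1 strong convergence}, combined with the rescaled interaction bound $R^{-4s}\int_{\partial^0 B_r^+}u^2 v^2\to 0$ (derived from Corollary \ref{bound on energy} together with $4s>1$ when $s>1/4$), permits passage to the limit and produces the Pohozaev identity for the trivial pair $(\hat u_\infty,\hat v_\infty)\equiv 0$, which must then clash with the nontrivial Pohozaev identity inherited from the normalized blow-down $(u_\infty,v_\infty)$.

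I expect the main obstacle to be precisely this last step, namely the quantitative matching of Pohozaev terms across the two rescalings which live in incompatible normalizations. A possibly cleaner route would be to establish $\int_{R_0}^\infty (s-N(r))/r\,dr<\infty$ directly—which together with \eqref{4.3} would immediately give $H(R)\ge cR^{2s}$—by exploiting the summability $\int_{R_0}^\infty \int_{\partial^0 B_r^+}u^2 v^2/(r^{1+a}H(r))\,dr<\infty$ already contained in the proof of Proposition \ref{prop Almgren} via \eqref{Almgren derivative}, and then extracting a sharpened control on $s-N(r)$ from the equality case of the Cauchy--Schwarz bound used there.
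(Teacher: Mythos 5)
Your proposed argument does not close; the attempted contradiction is not actually a contradiction. After you show that the rescalings $\hat u_R = R^{-s}u(Rz)$ vanish locally uniformly while the normalized blow-downs $u_R = L(R)^{-1}u(Rz)$ converge to a nontrivial profile, the only conclusion you can draw is $L(R)/R^s\to 0$. But that is essentially a restatement of the hypothesis $H(R)/R^{2s}\to 0$ (the two quantities control each other, since $L(R)^2 = R^{-1}\int_{\partial^+B_R^+}(u^2+v^2)$), so nothing has been gained. The normalization $L(R)$ is by construction chosen precisely so that $u_R$ never degenerates; the fact that $u_R\to u_\infty\neq 0$ therefore carries no information about the growth rate of $u$. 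Likewise, the Pohozaev identity for the trivial limit $(\hat u_\infty,\hat v_\infty)\equiv(0,0)$ reads $0=0$ and cannot ``clash'' with the Pohozaev identity of the normalized limit $(u_\infty,v_\infty)$ -- the two identities live at different scales and both are trivially self-consistent. You flag this yourself as ``the main obstacle,'' and it is indeed fatal. The alternative route you sketch (proving $\int_{R_0}^\infty (s-N(r))/r\,dr<\infty$) would suffice, but you do not establish it: integrating \eqref{4.3} gives $\log(H(R)/H(R_0))\geq 2s\log(R/R_0) - 2\int_{R_0}^R (s-N(r))/r\,dr + 2\int_{R_0}^R \frac{\int_{\partial^0 B_r^+}u^2v^2}{\int_{\partial^+B_r^+}y^a(u^2+v^2)}dr$, and the term you would need to control, $\int (s-N)/r$, is not directly bounded by the quantity you cite from the proof of Proposition \ref{prop Almgren}; extracting it from the equality case of the Cauchy--Schwarz step would require precisely the kind of sharpened decay estimate on $s-N(r)$ that the proposition is ultimately trying to bootstrap.

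The paper's actual proof is structurally quite different and avoids the circularity. It first builds two nontrivial, nonnegative, $L_a$-subharmonic functions $w_1 = (u-M^\ast)_+$ and $w_2 = (v-M^\ast)_+$ with disjoint supports on $\partial\R^2_+$ (this uses Lemma \ref{lem 7.2}), then invokes the Alt--Caffarelli--Friedman type monotonicity formula from \cite[Proposition 2.9]{TVZ 2}: the product
\[
J(r) = r^{-4s}\left(\int_{B_r^+}y^a\frac{|\nabla w_1|^2}{|z|^a}\right)\left(\int_{B_r^+}y^a\frac{|\nabla w_2|^2}{|z|^a}\right)
\]
is nondecreasing. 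Since $w_1,w_2$ are nonconstant, $J(R^\ast)>0$ for some fixed $R^\ast$, giving $J(r)\geq c>0$ for $r\geq R^\ast$; a Caccioppoli inequality then converts the resulting lower bound on the product of Dirichlet energies into a lower bound on $\int_{B_{2r}^+}y^a(u^2+v^2)$, and the mean-value inequality for $L_a$-subharmonic functions together with the $u$--$v$ balance \eqref{balance between u and v} finishes. The key ingredient your proposal lacks is precisely this ACF functional, which supplies the hard quantitative lower bound that compactness-plus-classification arguments cannot reach.
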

We first present two lemmas needed in the proof of this proposition.
\begin{lem}\label{lem 7.1}
For any $K>0$, there exists an $R(K)$ such that $\{Kx>y>0\}\cap
B_{R(K)}^c\subset\{u>v\}$ and $\{-Kx>y>0\}\cap
B_{R(K)}^c\subset\{u<v\}$.
\end{lem}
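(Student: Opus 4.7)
The plan is to transfer the strict inequality between the homogeneous blow-down profiles $u_\infty$ and $v_\infty$ back to $(u,v)$ at all sufficiently large scales via a scaling-compactness argument. By Section~5 (subsection~5.2) and Lemma~\ref{lem limit equation 2}, every blow-down limit $(u_\infty, v_\infty)$ along any subsequence $R_i \to +\infty$ has the form
\[ u_\infty(r,\theta) = a\, r^s \bigl(\cos(\theta/2)\bigr)^{2s}, \qquad v_\infty(r,\theta) = a\, r^s \bigl(\sin(\theta/2)\bigr)^{2s}, \]
for some $a > 0$ (possibly depending on the subsequence); crucially, the orientation is fixed and not swapped, by the continuity argument recorded at the end of Section~5.

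Fix $K > 0$ and consider the compact arc $\Sigma_K := \{(\cos\alpha, \sin\alpha) : \alpha \in [0, \arctan K]\} \subset \overline{\partial^+ B_1^+}$. Since $\arctan K < \pi/2$, one has $\cos(\alpha/2) > \sin(\alpha/2)$ strictly for every $\alpha$ in this interval, so on $\Sigma_K$ every subsequential blow-down limit satisfies
\[ u_\infty - v_\infty \;\geq\; a\, c_K, \qquad c_K := \min_{\alpha \in [0, \arctan K]} \bigl[(\cos(\alpha/2))^{2s} - (\sin(\alpha/2))^{2s}\bigr] > 0, \]
where $c_K$ depends only on $K$ (not on the subsequence).

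I would then argue by contradiction. If no such $R(K)$ exists, there is a sequence $z_n \to \infty$ with $z_n \in \{Kx > y > 0\}$ and $u(z_n) \leq v(z_n)$. Setting $R_n := |z_n|$ and $\zeta_n := z_n/R_n \in \Sigma_K$, the scaling $u(z_n) = L(R_n)\, u_{R_n}(\zeta_n)$ and likewise for $v$ gives $u_{R_n}(\zeta_n) \leq v_{R_n}(\zeta_n)$. By compactness we may extract $\zeta_n \to \zeta_* \in \Sigma_K$, and after passing to a further subsequence, Lemma~\ref{lem H1 strong convergence} combined with the uniform Hölder bound from \cite{TVZ} gives $(u_{R_n}, v_{R_n}) \to (u_\infty, v_\infty)$ uniformly on the compact set $\overline{B_2^+} \setminus B_{1/2}^+$. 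Passing to the limit yields $u_\infty(\zeta_*) \leq v_\infty(\zeta_*)$, contradicting the strictly positive lower bound $a\,c_K$ established above.

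The second half of the lemma follows identically, working with the arc $\{\alpha \in [\pi - \arctan K, \pi]\}$, on which $\sin(\alpha/2) > \cos(\alpha/2)$ strictly. The essential ingredient is the classification of blow-down limits from Section~5, so the argument does not introduce new analytic difficulty; the main point requiring care is that the constant $a$ may a priori depend on the subsequence, but this is harmlessly absorbed into the contradiction argument since $a > 0$ in every case.
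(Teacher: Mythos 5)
Your proof is correct and takes essentially the same approach as the paper: both exploit the uniform (on compacts up to the boundary) convergence of the blow-down sequence to the homogeneous profile together with the strict pointwise separation of $(\cos(\theta/2))^{2s}$ and $(\sin(\theta/2))^{2s}$ away from $\theta=\pi/2$. The paper phrases it directly (for $R\geq R(K)$ the blow-down is uniformly $\delta(K)$-close to the profile, which is $\delta(K)$-separated on the cone), while you phrase it as a contradiction argument along a sequence $z_n\to\infty$; the two are equivalent, and your remark that the possible subsequence-dependence of the normalizing constant $a$ is harmless is accurate (in fact the normalization \eqref{normalization} already pins $a$ down uniquely, which is what justifies the paper's uniform-in-$R$ statement).
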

\begin{proof}
This is because, there exists a $\delta(K)>0$ so that for any $R\geq R(K)$,
\[\sup_{\overline{B_1^+}}\big|u^R-ar^s\left(\cos\frac{\theta}{2}\right)^{2s}\big|
+\big|v^R-ar^s\left(\sin\frac{\theta}{2}\right)^{2s}\big|\leq\delta(K),\]
 and
\[ar^s\left(\cos\frac{\theta}{2}\right)^{2s}\geq ar^s\left(\sin\frac{\theta}{2}\right)^{2s}+\delta(K), \ \ \ \mbox{in}\ \{Kx>y>0\}\cap\left(\overline{B_1^+}\setminus B_{1/2}^+\right).\]
These two imply that $u^R>v^R$ in $\overline{B_1^+}\setminus B_{1/2}^+$. By noting that this holds for any $R\geq R(K)$, we complete the proof.
\end{proof}
\begin{lem}\label{lem 7.2}
As $x\to+\infty$, $u(x,0)\to+\infty$ and $v(x,0)\to0$. As $x\to-\infty$, $v(x,0)\to+\infty$ and $u(x,0)\to0$.
\end{lem}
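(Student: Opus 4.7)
The plan is to reduce both halves of the lemma to the uniqueness of the blow-down limit established in Sections 4 and 5, combined with the boundary decay estimate Lemma \ref{lem decay} from the appendix. First I record that, after excluding the flip ambiguity as in Section 5, every sequence $R_n\to+\infty$ will satisfy $(u_{R_n},v_{R_n})\to(u_\infty,v_\infty)$ uniformly on compact subsets of $\overline{\R^2_+}$, where
\[u_\infty(r,\theta)=ar^s\left(\cos\tfrac{\theta}{2}\right)^{2s},\qquad v_\infty(r,\theta)=ar^s\left(\sin\tfrac{\theta}{2}\right)^{2s},\]
for a single constant $a>0$ pinned down by \eqref{normalization} and independent of the sequence. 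In particular $u_\infty(1,0)=v_\infty(-1,0)=a>0$ and $v_\infty(1,0)=u_\infty(-1,0)=0$, which is exactly the limiting skeleton the lemma will make quantitative.

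For $u(x,0)\to+\infty$ as $x\to+\infty$, I would evaluate the uniform convergence at $z=(1,0)$ to get $u_R(1,0)\to a$, so $u_R(1,0)\geq a/2$ for all large $R$. Unscaling gives $u(R,0)=L(R)u_R(1,0)\geq \tfrac{a}{2}L(R)$, and the coarse lower bound $L(R)\geq C_\alpha R^\alpha$ from \eqref{coarse lower bound} then forces $u(R,0)\to+\infty$ along any sequence, which upgrades to the continuous limit by a standard subsequence argument.

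For $v(x,0)\to 0$ as $x\to+\infty$, I would fix $r_0,\delta_0>0$ so that $u_\infty\geq 2\delta_0$ on $\overline{B_{r_0}^+(1,0)}$ (possible by continuity and positivity of $u_\infty$ at $(1,0)$). Uniform convergence gives $u_R\geq \delta_0$ on the same half-ball for all large $R$, so \eqref{equation extension scaled} produces
\[\partial_y^a v_R=\kappa_R u_R^2 v_R\geq \kappa_R\delta_0^2\, v_R\qquad\text{on }\partial^0B_{r_0}^+(1,0),\]
with $\kappa_R=L(R)^2R^{2s}\to+\infty$. Combining this with the uniform $L^\infty$ bound on $v_R$ inherited from the blow-down analysis, Lemma \ref{lem decay} will yield $v_R(1,0)\leq C\kappa_R^{-1}$, which unscales to $v(R,0)\leq CL(R)/\kappa_R=C/\!\left(L(R)R^{2s}\right)\to 0$. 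The behavior at $-\infty$ follows by applying the same two steps to the pair $(\tilde u,\tilde v)(x,y):=(v(-x,y),u(-x,y))$, which again solves \eqref{equation extension}.

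The main obstacle, and the reason this proof is not entirely soft, is the global subsequence-independence of $(u_\infty,v_\infty)$: a priori, different sequences $R_n\to+\infty$ could select different constants or the flipped pair, in which case neither $u(x,0)\to+\infty$ nor $v(x,0)\to 0$ would be forced. This independence is precisely what Section 5 secures via the continuity argument at the hypothetical crossing $u(R_i,0)=v(R_i,0)$, and once it is in hand, the two steps above are essentially automatic applications of the boundary Liouville-type estimate Lemma \ref{lem decay}.
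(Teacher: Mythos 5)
Your proof is correct, and the half dealing with $u(x,0)\to+\infty$ is essentially the paper's own argument: rescale by $L(R)^{-1}$, note $u_R(1,0)\to a>0$ by the uniqueness of the blow-down limit, and divide by $L(R)\geq C_\alpha R^\alpha$ (the paper phrases this as a contradiction, but the content is identical). Your observation that the subsequence-independence of the blow-down limit is the only genuine obstacle is accurate; once the orientation is fixed (Section~5) and $a_+=a_-$ is known, the normalization \eqref{normalization} pins down the constant, so there is no gap.

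For the half showing $v(x,0)\to0$, you take a genuinely different route from the paper. You work with the $L(R)^{-1}$-normalized blow-down $v_R$, use the uniform convergence $u_R\to u_\infty$ together with $u_\infty(1,0)=a>0$ to force $u_R\geq\delta_0$ on a fixed half-ball around $(1,0)$, and then feed the resulting linear boundary inequality $\partial_y^a v_R\geq\kappa_R\delta_0^2\,v_R$ directly into Lemma~\ref{lem decay}. This mirrors the argument used in the proof of Lemma~\ref{lem limit equation 1}. The paper instead rescales by $\lambda^{-s}$ (no $L$-normalization), uses the comparison $u^\lambda\geq v^\lambda$ in the cone from Lemma~\ref{lem 7.1} to get the \emph{cubic} inequality $\partial_y^a v^\lambda\geq\lambda^{4s}(v^\lambda)^3$, and then linearizes via the truncation $(v^\lambda-\lambda^{-4s/3})^+$ before invoking Lemma~\ref{lem decay}, producing the explicit rate $v(\lambda,0)\leq C\lambda^{-s/3}$. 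Both routes are valid. Yours is arguably cleaner and avoids the truncation trick, at the cost of leaning on the positivity of the limit profile $u_\infty$ (rather than merely on $u\geq v$); the paper's version gives a self-contained explicit decay rate in $\lambda$ alone, which is however superseded anyway by the sharper bound in Proposition~\ref{decay estimate}.
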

\begin{proof}
For any $\lambda>0$ large, let
\[u^\lambda(x,y):=\lambda^{-s}u(\lambda x,\lambda y), \ \ \
v^\lambda(x,y):=\lambda^{-s}v(\lambda x,\lambda y).\]

By the previous lemma and Proposition \ref{upper bound},
\[u^\lambda\geq v^\lambda, \ \ \ v^\lambda\leq C\ \ \ \mbox{in}\ B_{1/2}^+(1,0).\]

$v^\lambda$ satisfies
\begin{equation*}
\left\{
\begin{aligned}
 &L_a v^\lambda=0, \ \mbox{in}\ B_{1/2}^+(1,0), \\
 &\partial^a_yv^\lambda=\lambda^{4s}\left(u^\lambda\right)^2v^\lambda\geq \lambda^{4s}\left(v^\lambda\right)^3, \ \mbox{on}\ \partial B_{1/2}^+(1,0).
                          \end{aligned} \right.
\end{equation*}
Then $(v^\lambda-\lambda^{-\frac{4s}{3}})^+$ satisfies
\begin{equation*}
\left\{
\begin{aligned}
 &L_a (v^\lambda-\lambda^{-\frac{4s}{3}})^+\geq 0, \ \mbox{in}\ B_{1/2}^+(1,0), \\
 &\partial^a_y(v^\lambda-\lambda^{-\frac{4s}{3}})^+\geq \lambda^{\frac{4s}{3}}(v^\lambda-\lambda^{-\frac{4s}{3}})^+, \ \mbox{on}\ \partial B_{1/2}^+(1,0).
                          \end{aligned} \right.
\end{equation*}
By Lemma \ref{lem decay} we get
\[\sup_{B_{1/4}^+(1,0)}v^\lambda\leq \sup_{B_{1/4}^+(1,0)}(v^\lambda-\lambda^{-\frac{4s}{3}})^++\lambda^{-\frac{4s}{3}}\leq C\lambda^{-\frac{4s}{3}}.\]
Rescaling back we get $v(\lambda,0)\leq C\lambda^{-s/3}$ for all $\lambda$ large.

Next assume that there exists $\lambda_i\to+\infty$, $u(\lambda_i,0)\leq M$ for some $M>0$. Then define the blow down sequence $(u^{\lambda_i},v^{\lambda_i})$ as before, as in the proof of Lemma \ref{lem 6.1} we can get a contradiction. Indeed, the blow down analysis gives $u^{\lambda_i}(1,0)\to a$ for some constant $a>0$, but \eqref{coarse lower bound} implies that \[u^{\lambda_i}(1,0)\leq CM\lambda_i^{-\alpha}\to0.\]
This is a contradiction.
\end{proof}

Now we can prove Proposition \ref{lower bound}.
\begin{proof}[Proof of Proposition \ref{lower bound}]
By the previous lemma, there exists a constant $M^\ast$ such that $(u-M^\ast)_+$ and $(v-M^\ast)_+$ has disjoint supports on $\partial\R^2_+$. Let $w_1=(u-M)_+$ and $w_2=(v-M)_+$. Both functions are nonnegative, continuous and $L_a$-subharmonic. By assuming $M^\ast>\max\{u(0,0),v(0,0)\}$, $w_1(0,0)=w_2(0,0)=0$. Moreover, they satisfy
\begin{equation*}
\left\{
\begin{aligned}
 &w_iL_aw_i=0, \ \mbox{in}\ \R^2_+, \\
 &\partial^a_y w_i\geq 0 \ \ \mbox{on}\ \partial\R^2_+.
                          \end{aligned} \right.
\end{equation*}
This then implies that for any nonnegative $\phi\in C_0^\infty(\R^2)$,
\begin{equation}\label{9.2}
\int_{\R^2_+}y^a\nabla w_i\cdot\nabla\left(w_i\phi\right)=-\int_{\partial\R^2_+}w_i\partial^a_yw_i\phi\leq 0.
\end{equation}
Then by \cite[Proposition 2.9]{TVZ 2} (note that here the dimension $n=1$ and hence in that proposition the exponent $\nu^{ACF}=s$),
\[J(r):=r^{-4s}\left(\int_{B_r^+}y^a\frac{|\nabla w_1|^2}{|z|^a}\right)\left(\int_{B_r^+}y^a\frac{|\nabla w_2|^2}{|z|^a}\right)\]
is non-decreasing in $r>0$. This then implies the existence of a constant $c$ so that
\begin{equation}\label{9.1}
\left(\int_{B_r^+}y^a\frac{|\nabla w_1|^2}{|z|^a}\right)\left(\int_{B_r^+}y^a\frac{|\nabla w_2|^2}{|z|^a}\right)\geq cr^{4s},
\ \ \forall \ r>R^\ast.
\end{equation}
Here we choose $R^\ast$ large so that $w_1$ and $w_2$ are not constant in $B_{R^\ast}^+$, which implies
\[\int_{B_{R^\ast}^+}y^a\frac{|\nabla w_1|^2}{|z|^a}\geq c,\ \  \
\int_{B_{R^\ast}^+}y^a\frac{|\nabla w_2|^2}{|z|^a}\geq c,\]
where $c>0$ is a constant depending on the solution $(u,v)$ and $R^\ast$.

Take an $\eta\in C_0^\infty(B_2)$ such that $\eta\equiv 1$ in $B_1$. For any $r>1$, let $\eta^r(z)=\eta(r^{-1}z)$. Substituting $\phi=(\eta^r)^2$ into \eqref{9.2} and integrating by parts gives (cf. the derivation of \cite[Eq. (2.3)]{TVZ 2})
\begin{equation}\label{6.1}
\int_{B_r^+}y^a\frac{|\nabla w_i|^2}{|z|^a}\leq Cr^{-2-a}\int_{B_{2r}^+\setminus B_r^+}y^aw_i^2.
\end{equation}
Substituting this into \eqref{9.1} leads to
\[\int_{B_{2r}^+}y^a\left(u^2+v^2\right)\geq \int_{B_{2r}^+}y^a\left(w_1^2+w_2^2\right)\geq cr^{2+a+2s}.\]

Because $u^2$ and $v^2$ are $L_a$-subharmonic, by the mean value
inequality, this can be transformed to
\[\int_{\partial^+ B_r^+}y^a\left(u^2+v^2\right)\geq cr^{2},\ \ \forall r>2.\]
Then by noticing \eqref{balance between u and v}, we finish the
proof.
\end{proof}

\begin{rmk}\label{rmk 1}
With Proposition \ref{upper bound} and Proposition \ref{lower bound} in hand, in the blow down analysis we can choose
\[u^R(z):=R^{-s}u(Rz),\ \  v^R(z):=R^{-s}v(Rz).\]
By the blow down analysis, for any $R_i\to+\infty$, there exists a subsequence of $R_i$ (still denoted by $R_i$) such that
\[u^{R_i}\to br^s\left(\cos\frac{\theta}{2}\right)^{2s},\ \
v^{R_i}\to br^s\left(\sin\frac{\theta}{2}\right)^{2s},\]
in $C(\overline{B_1^+})\cap H^{1,a}(B_1^+)$, for some constant $b>0$.

We claim that $b$ is independent of the sequence $R_i$, thus the blow down limit is unique.
By \eqref{6.1} and Proposition \ref{upper bound},
\begin{equation}\label{limit of J(R)}
\lim_{R\to+\infty}J(R)<+\infty,
\end{equation}
where the limit exists because $J(R)$ is non-decreasing.

For each $R$, let $w_1^R=(u_R-M^\ast R^{-s})_+=R^{-s}w_1(Rz)$ and $w_2^R=(v_R-M^\ast R^{-s})_+=R^{-s}w_2(Rz)$. Then a rescaling gives
\[J(R_i)=\left(\int_{B_1^+}y^a\frac{|\nabla w_1^{R_i}|^2}{|z|^a}\right)\left(\int_{B_1^+}y^a\frac{|\nabla w_2^{R_i}|^2}{|z|^a}\right).\]
For any $\delta>0$ small, by \eqref{6.1},
\[\lim_{i\to+\infty}\int_{B_\delta^+}y^a\frac{|\nabla w_1^{R_i}|^2}{|z|^a}\leq C\lim_{i\to+\infty}\left(\sup_{B_{2\delta}^+}w_1^{R_i}\right)^2=O(\delta^{2s}),\]
because $w_1^{R_i}$ converges uniformly to $br^s\left(\cos\frac{\theta}{2}\right)^{2s}$.
Using this estimate and the strong convergence of $w_1^{R_i}$ in $H^{1,a}(B_1^+)$,
 we obtain
\begin{eqnarray*}
\lim_{i\to+\infty}\int_{B_1^+}y^a\frac{|\nabla w_1^{R_i}|^2}{|z|^a}&=&\lim_{i\to+\infty}\int_{B_1^+\setminus B_\delta^+}y^a\frac{|\nabla w_1^{R_i}|^2}{|z|^a}+\lim_{i\to+\infty}\int_{B_\delta^+}y^a\frac{|\nabla w_1^{R_i}|^2}{|z|^a}\\
&=&b^2\int_{B_1^+\setminus B_\delta^+}y^a\frac{|\nabla r^s\left(\cos\frac{\theta}{2}\right)^{2s}|^2}{|z|^a}+O(\delta^{2s}).
\end{eqnarray*}
After applying \eqref{6.1} to $br^s\left(\cos\frac{\theta}{2}\right)^{2s}$ and letting $\delta\to0$, this gives
\[\lim_{i\to+\infty}\int_{B_1^+}y^a\frac{|\nabla w_1^{R_i}|^2}{|z|^a}=C(s)b^2,\]
where $C(s)$ is a constant depending only on $s$.

Substituting this into \eqref{limit of J(R)} we get
\[C(s)^2b^4=\lim_{R\to+\infty}J(R).\]
Thus $b$ does not depend on the choice of subsequence $R_i$.

After a scaling $(u(z),v(z))\mapsto (\lambda^s u(\lambda z),\lambda^s v(\lambda z)$ with a suitable $\lambda$, which leaves the equation \eqref{equation extension} invariant, we can assume $b=1$. That is, as $R\to+\infty$,
\[R^{-s}u(Rz)\to r^s\left(\cos\frac{\theta}{2}\right)^{2s},\ \
R^{-s}v(Rz)\to r^s\left(\sin\frac{\theta}{2}\right)^{2s}.\]
\end{rmk}

By \eqref{sharp lower bound} and Proposition \ref{upper bound},
there exist two constants $c_1,c_2>0$, such that
\[\{u>c_1r^s\}\cap \partial^+B_r^+\cap\{y\geq c_2|x|\}\neq\emptyset.\]
 Since $u$ is positive $L_a$-harmonic in $\R^2_+$,  by applying the
Harnack inequality to a chain of balls (with the number of balls
depending only on $\varepsilon$), for any $\varepsilon>0$, there
exists a constant $c(\varepsilon)$ such that
\begin{equation}\label{sharp lower bound 2}
u(z)\geq c(\varepsilon)|z|^s, \ \ v(z)\geq c(\varepsilon)|z|^s\ \ \
\mbox{in}\ \{y\geq \varepsilon|x|\}.
\end{equation}

\begin{lem}
For any $\varepsilon,\delta>0$, there exists a constant
$R(\varepsilon,\delta)$ such that
\begin{equation}\label{5}
v(z)\leq \delta|z|^s, \ \ \mbox{in} \ \{(x,y): x\geq
R(\varepsilon,\delta), 0\leq y\leq \varepsilon
(x-R(\varepsilon,\delta)) \}.
\end{equation}
\end{lem}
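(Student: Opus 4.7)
The plan is to combine the pointwise lower bound $u\geq c\,x^s$ near the positive $x$-axis, provided by the blow-down convergence of Remark \ref{rmk 1}, with the boundary decay estimate of Lemma \ref{lem decay}. The underlying idea is that the large absorption coefficient $u^2$ in the boundary equation $\partial_y^a v=u^2 v$ forces $v$ to decay rapidly throughout the cone, much faster than the blow-down alone would suggest.

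Fix a point $z=(x,y)$ with $x\geq R$ and $0\leq y\leq\varepsilon(x-R)$, and consider a half-ball $B_\rho^+(x,0)$ containing $z$ (so $\rho\geq y$). By Remark \ref{rmk 1} together with translation in the $x$-direction, for $R$ sufficiently large one has $u\geq c_0\,x^s$ throughout $B_{x/2}^+(x,0)$ for a universal constant $c_0>0$. Passing to the rescaled function $\tilde v(w)=v(x+\rho w_1,\rho w_2)$ and using the relation $1-a=2s$, the boundary condition becomes $\partial_y^a\tilde v\geq c_0^2(\rho x)^{2s}\tilde v$ on $\partial^0 B_1^+$, while Proposition \ref{upper bound} bounds $\tilde v\leq C x^s$ on $\partial^+ B_1^+$. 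Applying Lemma \ref{lem decay} then yields
\[\sup_{B_{\rho/2}^+(x,0)} v\leq C(\rho x)^{-2s}x^s,\]
so that, using $|z|\geq x/2$, we get $v(z)/|z|^s\leq C(\rho x)^{-2s}$, which is bounded by $\delta$ once $\rho x\geq (C/\delta)^{1/(2s)}=:K(\delta)$.

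I would then choose $\rho=\max\{2y,\,K(\delta)/x\}$: this automatically ensures both $z\in B_{\rho/2}^+(x,0)$ and $\rho x\geq K(\delta)$. The constraint $\rho\leq x/2$ needed for the half-ball to remain in the good region for $u$ is then secured by the cone condition $y\leq\varepsilon x$ (for $\varepsilon$ not too large; otherwise reduce to this case first) together with the lower bound $x\geq\sqrt{2K(\delta)}$. Taking $R=R(\varepsilon,\delta)$ large enough to enforce these restrictions and the validity of $u\geq c_0\,x^s$ then finishes the proof.

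The main obstacle is the two-scale nature of the choice of $\rho$: near the apex where $y$ is small and the natural scale is $K(\delta)/x$, the bound effectively says $v(x,0)$ is already very small, consistent with the decay $v(\cdot,0)\to 0$ established in Lemma \ref{lem 7.2}; deeper in the cone, where the height $y$ dominates, the natural scale $\rho=2y$ can be large and one must verify that $B_\rho^+(x,0)$ still lies inside the region in which the uniform lower bound $u\geq c_0\,x^s$ holds. Carrying out this bookkeeping uniformly over all $z$ in the cone, while tracking how the various constants enter through $\varepsilon$ and $\delta$, is the delicate technical step.
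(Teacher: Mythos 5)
Your approach is genuinely different from the paper's, and unfortunately it contains a gap that I believe is fatal as written. The paper's proof is much more direct: it simply invokes the uniform blow-down convergence $(R^{-s}u(Rz),R^{-s}v(Rz))\to(r^s(\cos\tfrac\theta 2)^{2s}, r^s(\sin\tfrac\theta 2)^{2s})$ from Remark~\ref{rmk 1}, observes that $(\sin\tfrac\theta2)^{2s}\leq\delta/2$ on a thin cone $\{\theta\leq\theta_0(\delta)\}$, and concludes by applying the convergence at the scale $R\sim|z|$; no boundary decay lemma is needed.

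The problem with your argument is the step ``Applying Lemma~\ref{lem decay} then yields $\sup_{B_{\rho/2}^+(x,0)}v\leq C(\rho x)^{-2s}x^s$.'' Lemma~\ref{lem decay} produces a small bound only on the \emph{flat} part of the boundary, $\partial^0 B_{1/2}^+$; it does not control the interior of the half-ball. (Its proof first gets $\sup_{B_{2/3}^+}v\lesssim\int y^a v$ from subharmonicity and then improves \emph{only} the trace $\sup_{\partial^0 B_{1/2}^+}v$ by the factor $1/M$. The absorption $\partial_y^a v\geq Mv$ on $\partial^0$ forces $v$ small near the $x$-axis, but says nothing at heights comparable to the radius, where $v$ is dominated by its values on $\partial^+ B_\rho^+$, of size $\sim x^s$.) Your target point $z=(x,y)$ sits at distance $y$ from the center, and when $\rho=2y$ it lies on $\partial B_{\rho/2}(x,0)$, precisely where the estimate is unavailable. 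Concretely, take $y=\eta x$ with $\eta\in(0,\varepsilon)$ fixed and $x\to\infty$: the blow-down gives $v(x,\eta x)\sim |z|^s(\sin\tfrac{\theta}{2})^{2s}\sim c\,\eta^{2s}x^s$, whereas your claimed bound with $\rho=2\eta x$ would be $C(\rho x)^{-2s}x^s\sim C'\eta^{-2s}x^{-3s}\to 0$, a contradiction. So the absorption mechanism genuinely cannot reach interior points; what bounds $v$ in the cone is simply the smallness of the angular factor $(\sin\tfrac\theta2)^{2s}$, which is what the paper exploits. Your method does correctly reprove the boundary decay (and indeed this is exactly how the paper establishes Proposition~\ref{decay estimate} later), but it does not extend to the interior cone estimate asserted in the lemma.
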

\begin{proof}
By Proposition \ref{upper bound} and Proposition \ref{lower bound},
in the definition of blow down sequence we can take
\[u_R(z)=R^{-s}u(Rz),\ \ \ v_R(z)=R^{-s}v(Rz).\]
As $R\to+\infty$, $(u_R,v_R)$ converges to
$(r^s(\cos\frac{\theta}{2})^{2s},r^s(\sin\frac{\theta}{2})^{2s})$
uniformly in $B_1^+$. Thus we can choose an $\varepsilon$ depending
only on $\delta$ so that for all $R$ large, $v_R\leq\delta$ in
$B_1^+\cap\{0\leq y\leq\varepsilon x^+\}$.
\end{proof}

\begin{lem}
For any $\varepsilon>0$, there exists a constant $c(\varepsilon)$
such that
\begin{equation}\label{sharp lower bound 3}
u(z)\geq c(\varepsilon)|z|^s, \ \ \ \mbox{in}\ \{y\geq \varepsilon
x_-\}.
\end{equation}
\end{lem}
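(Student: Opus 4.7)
The plan is to combine the uniform blow-down convergence established in Remark \ref{rmk 1} with a compactness argument on a bounded piece. The key geometric observation is that in polar coordinates, the set $\{y\geq\varepsilon x_-\}$ corresponds precisely to $\{\theta\in[0,\pi-\arctan\varepsilon]\}$. Indeed, for $x\geq 0$ (i.e.\ $\theta\in[0,\pi/2]$) the constraint is automatic, while for $x<0$ the condition $r\sin\theta\geq-\varepsilon r\cos\theta$ rearranges to $\sin\theta+\varepsilon\cos\theta\geq 0$, which holds on $(\pi/2,\pi)$ exactly up to $\theta=\pi-\arctan\varepsilon$. On this range of angles, the limit angular profile satisfies
\[
\bigl(\cos(\theta/2)\bigr)^{2s}\geq \bigl(\cos\tfrac{\pi-\arctan\varepsilon}{2}\bigr)^{2s}=\bigl(\sin\tfrac{\arctan\varepsilon}{2}\bigr)^{2s}=:2c_0(\varepsilon)>0.
\]

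By Remark \ref{rmk 1}, after the normalizing scaling we have $R^{-s}u(R\,\cdot)\to r^s(\cos\tfrac{\theta}{2})^{2s}$ uniformly on $\overline{B_1^+}$ as $R\to+\infty$. Hence there exists $R_0=R_0(\varepsilon)$ such that for all $R\geq R_0$ and all $w\in\overline{B_1^+}$,
\[
\bigl|R^{-s}u(Rw)-|w|^s(\cos\tfrac{\arg w}{2})^{2s}\bigr|<c_0(\varepsilon).
\]
Given an arbitrary $z$ in our region with $|z|\geq R_0$, I apply this with $R=|z|$ and $w=z/|z|$. Since $|w|=1$ and $\arg w\in[0,\pi-\arctan\varepsilon]$, the limit evaluates to at least $2c_0(\varepsilon)$, so $|z|^{-s}u(z)\geq c_0(\varepsilon)$, i.e.\ $u(z)\geq c_0(\varepsilon)|z|^s$.

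It remains to handle $|z|\leq R_0$ within the region. The set $K_\varepsilon:=\overline{B_{R_0}^+}\cap\{y\geq\varepsilon x_-\}$ is compact, and $u$ is continuous and strictly positive on $\overline{\R^2_+}$ (by positivity of $u$ in \eqref{equation extension} and the strong maximum principle for $L_a$-harmonic functions). Hence $u$ attains a positive minimum $c_1>0$ on $K_\varepsilon$. Since $|z|^s\leq R_0^s$ there, $u(z)\geq c_1\geq c_1 R_0^{-s}|z|^s$ on $K_\varepsilon$. Setting $c(\varepsilon):=\min\{c_0(\varepsilon),\,c_1 R_0^{-s}\}$ gives the desired bound on the whole region.

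I do not anticipate a serious obstacle: the proof essentially repackages Remark \ref{rmk 1}. The only points requiring care are verifying the polar description of $\{y\geq\varepsilon x_-\}$, and ensuring that the angle-dependent constant $c_0(\varepsilon)$ is explicitly positive (which is immediate from the closed-form expression). The compactness argument near the origin requires only continuity and strict positivity of $u$, which are already in hand.
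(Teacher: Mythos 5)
Your proof is correct, and it takes a genuinely different route from the paper. Your argument applies the uniform blow-down convergence (Remark \ref{rmk 1}) directly on the closed unit half-ball: the polar description $\{y\geq\varepsilon x_-\}=\{\theta\in[0,\pi-\arctan\varepsilon]\}$ is correct, and since the limit profile $r^s(\cos\tfrac{\theta}{2})^{2s}$ is bounded below on that angular sector, uniform convergence gives the bound at large radius, while compactness and strict positivity of $u$ on $\overline{\R^2_+}$ handle the bounded part. The paper instead reduces (via \eqref{sharp lower bound 2}) to establishing a lower bound inside the narrow cone $\mathcal{C}=\{x\geq R_0,\ 0\leq y\leq\varepsilon(x-R_0)\}$, works with the $L_a$-harmonic quantity $u-v$, and runs a Phragm\'en--Lindel\"of style comparison against a barrier $r^d\psi(\theta)$ built from the first eigenfunction of the angular operator $L_\theta^a$ on a thin cone, chosen so that $d>s$. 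Your approach is shorter and exhibits more transparently that the lemma is a direct consequence of the identification of the blow-down limit; the paper's barrier argument is structurally heavier, using Lemma \ref{lem 7.1} ($u>v$ in $\{Kx>y>0\}$ outside a large ball) and the Neumann sign condition $\partial_y^a(u-v)\leq 0$ on $\{y=0\}$, but it also delivers the stronger conclusion $u-v\geq c\,r^s$ in the cone, not just $u\geq c\,r^s$. Both routes rely on the blow-down analysis, so there is no saving in hypotheses; your version is simply the more economical deduction for this particular statement.
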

\begin{proof}
In view of \eqref{sharp lower bound 2}£¬ we only need to give a
lower bound in the domain $\mathcal{C}:=\{(x,y): x\geq
R_0, 0\leq y\leq \varepsilon (x-R_0) \}$, where $R_0$ is large but fixed.

$u-v$ is $L_a$-harmonic in $\mathcal{C}$, satisfying the following
boundary conditions (thanks to Lemma \ref{lem 7.1})
\begin{equation*}
\left\{
\begin{aligned}
 &u-v\geq c(\varepsilon)|z|^s, \ \mbox{on}\ \{ y=\varepsilon (x-R_0)\}\cap\mathcal{C}, \\
 &\partial^a_y(u-v)=uv^2-vu^2\leq 0, \ \mbox{on}\ \{ y=0\}\cap\partial\mathcal{C}.
                          \end{aligned} \right.
\end{equation*}
We claim that $u-v\geq c(\varepsilon, R_0)r^s$ 
in $\mathcal{C}$.

First, let $\psi(\theta)$ be the solution of
\begin{equation*}
\left\{
\begin{aligned}
 &-L^a_\theta\psi=d(d+a)\psi, \ \mbox{in}\ \{-\varepsilon<\theta<\varepsilon\}, \\
 &\psi>0, \ \mbox{in}\ \{-\varepsilon<\theta<\varepsilon\}, \\
 &\psi(-\varepsilon)=\psi(\varepsilon)=0.
                          \end{aligned} \right.
\end{equation*}
Here $d$ is determined by
\[d(d+a)=\min\frac{\int_{-\varepsilon}^\varepsilon\psi^\prime(\theta)^2\left(\sin\theta\right)^ad\theta}
{\int_{-\varepsilon}^\varepsilon\psi(\theta)^2\left(\sin\theta\right)^ad\theta},\]
in the class of functions satisfying $\psi(-\varepsilon)=\psi(\varepsilon)=0$.

This minima can be bounded from below by
\[c\min_{\eta\in C_0^\infty((-\varepsilon,\varepsilon))}\frac{\int_{-\varepsilon}^\varepsilon x^a\eta^\prime(x)^2dx}
{\int_{-\varepsilon}^\varepsilon x^a\eta(x)^2dx}\geq \frac{c}{\varepsilon^2}\min_{\eta\in C_0^\infty((-1,1))}\frac{\int_{-1}^1x^a\eta^\prime(x)^2dx}
{\int_{-1}^1x^a\eta(x)^2dx}\geq \frac{c}{\varepsilon^2}.\]
In particular, if $\varepsilon$ is small enough, $d>s$. Note that $\phi:=r^d\psi(\theta)$ is a positive $L_a$-harmonic function in the cone $\{|\theta|<2\varepsilon\}$. Moreover, since $\psi$ is even in $\theta$ (by the uniqueness of the first eigenfunction), $\phi$ is even in $y$.

For $\varepsilon$ sufficiently small, we have got a positive $L_a$-harmonic function $\phi$ in the cone $\{|y|\leq 2\varepsilon x\}$, satisfying $\phi\geq |z|^{2s}$ in $\mathcal{C}$. Apparently, $\partial_y^a|z|^s=\partial_y^a\phi=0$ on $\{y=0\}$. Then we can apply the maximum principle to
\[\frac{u-v-c(\varepsilon)|z|^s}{\phi},\]
to deduce that it is nonnegative in $\mathcal{C}$.
\end{proof}

\begin{prop}[{\bf Decay estimate}]\label{decay estimate}
For all $x>0$, $v(x,0)\leq C(1+x)^{-3s}$. For all $x<0$, $u(x)\leq C(1+|x|)^{-3s}$.
\end{prop}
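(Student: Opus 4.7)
My plan is to use the nonlinear coupling $\partial_y^a v = u^2 v$ on the boundary as a linear Robin-type condition with a very large coefficient, once we know $u$ is bounded below by a positive definite constant in a neighborhood of the positive $x$-axis. The improvement over Lemma \ref{lem 7.2} (which only gave the weak bound $v(\lambda,0)\leq C\lambda^{-s/3}$) comes precisely from the recently established pointwise lower bound \eqref{sharp lower bound 3}, which upgrades the relation $u\geq v$ used in Lemma \ref{lem 7.2} to a true growth $u(z)\geq c|z|^s$ around the positive $x$-axis.

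More concretely, fix $\lambda>0$ large and define, as in Lemma \ref{lem 7.2},
\[u^\lambda(z):=\lambda^{-s}u(\lambda z),\qquad v^\lambda(z):=\lambda^{-s}v(\lambda z),\]
so that $L_a u^\lambda=L_a v^\lambda=0$ in $\R^2_+$ and on $\partial\R^2_+$
\[\partial_y^a v^\lambda=\lambda^{4s}(u^\lambda)^2 v^\lambda.\]
Applying \eqref{sharp lower bound 3} with a small fixed $\varepsilon$, every point of the half-ball $B_{1/2}^+(1,0)$ lies in the cone $\{y\geq \varepsilon x_-\}$ after scaling by $\lambda$, so $u(\lambda z)\geq c\lambda^s$ throughout $B_{1/2}^+(1,0)$, i.e.\ $u^\lambda\geq c>0$ uniformly in $\lambda$. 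Inserting this into the boundary equation yields
\[\partial_y^a v^\lambda\geq c\lambda^{4s}\,v^\lambda\quad\text{on }\partial^0 B_{1/2}^+(1,0).\]

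Now $v^\lambda$ is positive and $L_a$-harmonic on $B_{1/2}^+(1,0)$, with the above linear Robin-type condition whose coefficient is $c\lambda^{4s}$. Applying the decay estimate Lemma \ref{lem decay} in exactly the same manner as in the proof of Lemma \ref{lem limit equation 1} (where $\kappa_R^{-1}$ decay was extracted from a coefficient $\kappa_R\delta_0^2$), I obtain
\[\sup_{\partial^0 B_{1/4}^+(1,0)}v^\lambda\leq C\lambda^{-4s}.\]
Undoing the scaling gives $v(\lambda,0)=\lambda^s v^\lambda(1,0)\leq C\lambda^{-3s}$, which is the desired bound for all $\lambda$ sufficiently large; combined with continuity of $v$ on $\partial\R^2_+$ this yields $v(x,0)\leq C(1+x)^{-3s}$ for all $x>0$.

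The statement for $u$ as $x\to-\infty$ follows by the mirror argument, once one notes that the analog of \eqref{sharp lower bound 3} for $v$ (i.e.\ $v(z)\geq c(\varepsilon)|z|^s$ on $\{y\geq\varepsilon x_+\}$) holds by the symmetric version of the cone argument leading to \eqref{sharp lower bound 3}, using Lemma \ref{lem 7.1} on the opposite side and the second part of Lemma \ref{lem 7.2}. No new ideas are required: the proof is a direct rescaling combined with Lemma \ref{lem decay}, and the only nontrivial ingredient, namely the definite lower bound $u^\lambda\geq c$ on the test half-ball, has already been secured by the previous section. Consequently, I do not expect a significant obstacle beyond carefully quoting the decay lemma with the coefficient dependence made explicit.
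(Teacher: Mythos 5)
Your proof is correct and follows essentially the same route as the paper: rescale to $(u^\lambda,v^\lambda)$, use the lower bound \eqref{sharp lower bound 3} to get a uniform $u^\lambda\geq c$ on $B_{1/2}^+(1,0)$, feed the resulting Robin coefficient $c\lambda^{4s}$ into Lemma \ref{lem decay}, and undo the scaling. The only cosmetic difference is that you extract a sup bound over $\partial^0 B_{1/4}^+(1,0)$ rather than evaluating directly at $(1,0)$, which changes nothing.
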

\begin{proof}
For any $\lambda>0$ large, let
\[u^\lambda(x,y):=\lambda^{-s}u(\lambda x,\lambda y), \ \ \
v^\lambda(x,y):=\lambda^{-s}v(\lambda x,\lambda y).\]

By the previous lemma and Proposition \ref{upper bound},
\[u\geq c, \ \ \ v\leq C\ \ \ \mbox{in}\ B_{1/2}^+(1,0).\]

The equation for $v^\lambda$ is
\begin{equation*}
\left\{
\begin{aligned}
 &L_a v^\lambda=0, \ \mbox{in}\ B_{1/2}^+(1,0), \\
 &\partial^a_yv^\lambda=\lambda^{4s}\left(u^\lambda\right)^2v^\lambda\geq c\lambda^{4s}v^\lambda, \ \mbox{on}\ \partial B_{1/2}^+(1,0).
                          \end{aligned} \right.
\end{equation*}

By Lemma \ref{lem decay},
\[v^\lambda(1,0)\leq C\lambda^{-4s}.\]
This then gives the estimate for $v(\lambda,0)$.
\end{proof}

Before proving a similar decay estimate for $\frac{\partial
u}{\partial x}$ and $\frac{\partial v}{\partial x}$, we first give
an upper bound for the gradient of $u$ and $v$.

\begin{prop}\label{gradient upper bound}
There exists a constant $C$ such that,
 \[\Big|\frac{\partial u}{\partial x}(x,y)\Big|+\Big|\frac{\partial v}{\partial x}(x,y)\Big|\leq C(1+|x|+|y|)^{s-1}.\]
 \[\Big|y^a\frac{\partial u}{\partial y}(x,y)\Big|+\Big|y^a\frac{\partial v}{\partial x}(x,y)\Big|\leq C(1+|x|+|y|)^{-s}.\]
\end{prop}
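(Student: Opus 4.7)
The plan is to deduce both bounds from a rescaling argument that reduces to a uniform estimate on the unit scale, splitting into an interior case (handled by Yau) and a boundary case (handled by the decay estimate together with a boundary $C^{1,\alpha}$ estimate for $L_a$). Fix $z_0 = (x_0,y_0)\in\R^2_+$ with $R:=|z_0|$ large, and set
\[\tilde u(w):=R^{-s}u(Rw),\qquad \tilde v(w):=R^{-s}v(Rw),\qquad w_0:=z_0/R.\]
Then $(\tilde u,\tilde v)$ solves the same extension system with the boundary coefficient rescaled to $R^{2s}$. The scalings
\[\partial_x u(z_0)=R^{s-1}\partial_x\tilde u(w_0),\qquad y_0^{a}\partial_y u(z_0)=R^{a+s-1}(w_0)_2^{\,a}\partial_y\tilde u(w_0)\]
combined with $a+s-1=-s$ reduce the claim to uniform bounds on $|\partial_x\tilde u(w_0)|$ and $|y^a\partial_y\tilde u(w_0)|$, provided $|w_0|=1$. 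For $R$ bounded the bounds are just local $C^1$ regularity and reduce to standard interior/boundary estimates, so I would focus on $R$ large.

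For the interior case $y_0\ge R/10$, Yau's gradient estimate (Theorem \ref{Yau's gradient estimate}) applied directly to $u$ gives
\[|\nabla u(z_0)|\le \frac{C}{y_0}u(z_0)\le \frac{C}{y_0}(1+R)^{s}\le CR^{s-1},\]
using Proposition \ref{upper bound}. The weighted $y$-derivative bound follows from $y_0^a\le CR^{a}$ (true for both signs of $a=1-2s$ when $y_0\sim R$), yielding $|y_0^a\partial_y u(z_0)|\le CR^{a+s-1}=CR^{-s}$. The same applies to $v$.

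For the boundary case $y_0<R/10$, I would use the decay estimate (Proposition \ref{decay estimate}) to control the rescaled Neumann-type data. Suppose first $x_0>0$ (the other case is handled by switching $u\leftrightarrow v$ via the companion decay estimate for $u$ on $\{x<0\}$). For $w\in\partial^0B_{1/4}(w_0)$ we have $Rw_1\ge R/2$, so
\[\tilde v(w_1,0)=R^{-s}v(Rw_1,0)\le CR^{-s}(Rw_1)^{-3s}\le CR^{-4s},\]
and hence $R^{2s}\tilde u\tilde v^2\le CR^{-6s}$ on $\partial^0B_{1/4}(w_0)$. Viewing $\tilde u$ as the solution of the linear Neumann problem $L_a\tilde u=0$ in $B_{1/4}^+(w_0)$ with a uniformly bounded (indeed vanishing) conormal datum $\partial_y^a\tilde u$, the boundary $C^{1,\alpha}$ regularity theory for $L_a$ (Caffarelli--Silvestre \cite{C-S}, also \cite{RS}) produces a uniform bound
\[|\partial_x\tilde u(w_0)|+|y^a\partial_y\tilde u(w_0)|\le C\bigl(\|\tilde u\|_{L^\infty(B^+_{1/2}(w_0))}+\|\partial_y^a\tilde u\|_{L^\infty(\partial^0B^+_{1/2}(w_0))}\bigr),\]
and the right-hand side is controlled by Proposition \ref{upper bound} together with the bound above. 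Combining with the interior case and rescaling yields both claimed estimates.

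The main obstacle is justifying the boundary $C^{1,\alpha}$ estimate in the form needed: an $L^\infty$-to-$C^1$ bound for weak solutions of $L_a\tilde u=0$ with bounded conormal data, with both the tangential derivative $\partial_x\tilde u$ and the weighted normal derivative $y^a\partial_y\tilde u$ continuous up to the flat boundary. This is available in the degenerate-elliptic / fractional-Laplacian literature, but care is needed to invoke a version that depends only on the $L^\infty$ norms of the solution and of $\partial_y^a\tilde u$, not on any structural bound for the boundary nonlinearity $R^{2s}\tilde u\tilde v^2$. A minor bookkeeping issue is the transition region where $y_0$ is comparable to $R/10$; this is handled by the overlap of the two cases.
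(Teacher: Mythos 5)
Your interior case (Yau's estimate together with Proposition \ref{upper bound}) matches the paper exactly, and your rescaling framework is the same. The boundary case, however, contains the gap that you yourself flagged, and it is a real one: an $L^\infty$ conormal datum for $L_a u = 0$ does \emph{not} yield a boundary $C^{1,\alpha}$ (or even $C^1$) estimate. Already for $a=0$ (the classical Neumann problem), the boundary datum $f(x)=\mathrm{sgn}(x)$ produces a solution whose tangential derivative has a logarithmic singularity; bounded Neumann data gives only $C^\alpha$ regularity for some $\alpha$ strictly below $1$. What you would actually need is a Schauder estimate, and that requires a uniform H\"older (not just $L^\infty$) bound on the rescaled Neumann data $R^{2s}\tilde u\tilde v^2$. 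Such a bound is available in principle --- one can combine the decay estimate $\tilde v\leq CR^{-4s}$ with the uniform $C^\alpha$ bound on the rescaled pair $(\tilde u,\tilde v)$ to show $[R^{2s}\tilde u\tilde v^2]_{C^\alpha}=O(R^{-2s})$ --- but this step is missing from your argument, and without it the appeal to \cite{C-S}, \cite{RS} is unjustified.

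The paper avoids this issue by a structurally different route. Instead of freezing the boundary data and invoking a black-box regularity theorem, it \emph{differentiates} the system in $x$. After differentiation one gets a linear Neumann-type boundary condition for $(\partial_x u^\lambda,\partial_x v^\lambda)$ with coefficients $\lambda^{4s}(v^\lambda)^2$ and $2\lambda^{4s}u^\lambda v^\lambda$ that are uniformly bounded in $L^\infty$ by the decay estimate, while the one unbounded coefficient $\lambda^{4s}(u^\lambda)^2$ enters with a \emph{favorable sign} in the energy estimate. Combining this with the $L^2$ bound $\int_D y^a(|\partial_x u^\lambda|^2+|\partial_x v^\lambda|^2)\leq C$ from Corollary \ref{bound on energy}, a Moser iteration gives the $L^\infty$ bound. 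For the weighted normal derivatives, the paper observes that $w=y^a\partial_y u^\lambda$ is $L_{-a}$-harmonic with \emph{Dirichlet} data $\lambda^{4s}u^\lambda(v^\lambda)^2\in(0,C)$ and an $L^2$ energy bound, so Moser iteration applied to $(w-C)_\pm$ again suffices. The upshot: the paper's method requires only $L^\infty$ coefficients plus $L^2$ energy, which is precisely what is available, while your linearization-plus-Schauder route needs the additional, unproven, uniform H\"older control of the data. If you want to salvage your approach, supply that H\"older estimate on the boundary data; otherwise switch to the differentiate-and-Moser-iterate scheme.
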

\begin{proof}
For all $\lambda$ large, consider $(u^\lambda, v^\lambda)$ introduced in the proof of the previous
proposition. It satisfies
\begin{equation}\label{equation rescaled}
\left\{
\begin{aligned}
 &L_a u^\lambda=L_a v^\lambda=0, \ \mbox{in}\ B_1^+, \\
 &\partial_y^a u^\lambda=\lambda^{4s}u^\lambda\left(v^\lambda\right)^2,\ \
  \partial_y^a v^\lambda=\lambda^{4s}v^\lambda\left(u^\lambda\right)^2, \ \ \mbox{on}\ \partial^0 B_1^+.
                          \end{aligned} \right.
\end{equation}

By Proposition \ref{upper bound}, $u^\lambda$ and $v^\lambda$ are uniformly bounded in $\overline{B_1^+}$.
Then by the gradient estimate Theorem \ref{Yau's gradient estimate},
\[\sup_{\{y\geq |x|/2\}\cap(B_1^+\setminus B_{1/2}^+)}|\nabla u^\lambda|+|\nabla v^\lambda|\leq C.\]
Rescaling back this gives the claimed estimates in the part $\{y\geq |x|/2\}$. (Note that here $y^a$ is comparable to $(|x|+y)^a$.)

Next we consider the part $D:=\{0\leq y< x\}\cap(B_1^+\setminus B_{1/2}^+)$. Here by differentiating \eqref{equation rescaled} we obtain
\begin{equation*}
\left\{
\begin{aligned}
 &L_a \frac{\partial u^\lambda}{\partial x}=L_a \frac{\partial v^\lambda}{\partial x}=0, \ \mbox{in}\ D, \\
 &\partial_y^a \frac{\partial u^\lambda}{\partial x}=\lambda^{4s}\left(v^\lambda\right)^2\frac{\partial u^\lambda}{\partial x}
 +2\lambda^{4s}u^\lambda v^\lambda\frac{\partial v^\lambda}{\partial x},\ \ \mbox{on}\ \partial^0D,\\
 &\partial_y^a \frac{\partial v^\lambda}{\partial x}=\lambda^{4s}\left(u^\lambda\right)^2\frac{\partial v^\lambda}{\partial x}
 +2\lambda^{4s}u^\lambda v^\lambda\frac{\partial u^\lambda}{\partial x},\ \ \mbox{on}\ \partial^0D.
                          \end{aligned} \right.
\end{equation*}
By Corollary \ref{bound on energy},
\[\int_Dy^a\left(\Big|\frac{\partial u^\lambda}{\partial x}\Big|^2+\Big|\frac{\partial v^\lambda}{\partial x}\Big|^2\right)\leq C,\]
for a constant $C$ independent of $\lambda$.

By Proposition \ref{decay estimate}, $v^\lambda\leq C\lambda^{-4s}$ on $\partial^0D$. Thus the coefficient $2\lambda^{4s}u^\lambda v^\lambda$ is uniformly bounded on $\partial^0D$. Although $\lambda^{4s}\left(u^\lambda\right)^2$ is not uniformly bounded, it has a favorable sign. Then standard Moser iteration (see for example \cite[Theorem 1.2]{TX}) gives
\[\sup_{\{0\leq y< x/2\}\cap(B_{3/4}^+\setminus B_{2/3}^+)}\Big|\frac{\partial u^\lambda}{\partial x}\Big|+\Big|\frac{\partial v^\lambda}{\partial x}\Big|\leq C,\]
for a constant $C$ independent of $\lambda$.

Finally, similar to the proof of Lemma \ref{lem limit equation 1}, we have
\begin{equation*}
\left\{
\begin{aligned}
 &L_{-a} \left(y^a\frac{\partial u^\lambda}{\partial y}\right)=L_{-a} \left(y^a\frac{\partial v^\lambda}{\partial y}\right)=0, \ \mbox{in}\ D, \\
 &y^a\frac{\partial u^\lambda}{\partial y}=\lambda^{4s}u^\lambda\left(v^\lambda\right)^2\in(0, C),\ \ \mbox{on}\ \partial^0D,\\
 &y^a\frac{\partial v^\lambda}{\partial y}=\lambda^{4s}v^\lambda\left(u^\lambda\right)^2\in(0, C),\ \ \mbox{on}\ \partial^0D.
                          \end{aligned} \right.
\end{equation*}
Moreover, by Corollary \ref{bound on energy},
\[\int_Dy^{-a}\left(\Big|y^a\frac{\partial u^\lambda}{\partial y}\Big|^2+\Big|y^a\frac{\partial v^\lambda}{\partial y}\Big|^2\right)\leq C,\]
for a constant $C$ independent of $\lambda$.

Then by applying the Moser iteration to $(y^a\frac{\partial u^\lambda}{\partial y}-C)_+$ and $(y^a\frac{\partial u^\lambda}{\partial y}-C)_-$,
we see
\[\sup_{\{0\leq y< x/2\}\cap(B_{3/4}^+\setminus B_{2/3}^+)}\Big|y^a\frac{\partial u^\lambda}{\partial y}\Big|+\Big|y^a\frac{\partial v^\lambda}{\partial y}\Big|\leq C,\]
for a constant $C$ independent of $\lambda$.
\end{proof}
Written in polar coordinates, this reads as
\begin{coro}\label{coro gradient upper bound}
There exists a constant $C$ such that,
 \[\Big|\frac{\partial u}{\partial r}(r,\theta)\Big|+\Big|\frac{\partial v}{\partial r}(r,\theta)\Big|\leq C(1+r)^{s-1}.\]
 \[\Big|\frac{\left(\sin\theta\right)^a}{r}\frac{\partial u}{\partial \theta}(r,\theta)\Big|+\Big|\frac{\left(\sin\theta\right)^a}{r}\frac{\partial u}{\partial \theta}(r,\theta)\Big|\leq C(1+r)^{s-1}.\]
\end{coro}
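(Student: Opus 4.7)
The plan is to derive this corollary directly from Proposition \ref{gradient upper bound} by changing from Cartesian to polar coordinates. Since $x = r\cos\theta$ and $y=r\sin\theta$, the standard chain rule identities
\[
\frac{\partial u}{\partial r}=\cos\theta\,\frac{\partial u}{\partial x}+\sin\theta\,\frac{\partial u}{\partial y},
\qquad
\frac{1}{r}\frac{\partial u}{\partial\theta}=-\sin\theta\,\frac{\partial u}{\partial x}+\cos\theta\,\frac{\partial u}{\partial y}
\]
allow us to reduce everything to pointwise estimates on $\partial_x u$, $\partial_x v$ and $y^a\partial_y u$, $y^a\partial_y v$, which are exactly the quantities controlled by Proposition \ref{gradient upper bound}. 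Note $|x|+|y|$ is comparable to $r$.

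First I would handle the $\partial_r$ bound. The $\cos\theta\,\partial_x u$ contribution is dominated by $C(1+r)^{s-1}$ directly. For $\sin\theta\,\partial_y u$, substitute $y=r\sin\theta$ to write
\[
\Big|\sin\theta\,\frac{\partial u}{\partial y}\Big|=\sin\theta\cdot y^{-a}\big|y^a\partial_y u\big|\leq C\,(\sin\theta)^{1-a}\,r^{-a}(1+r)^{-s}.
\]
Since $1-a=2s\geq 0$, the factor $(\sin\theta)^{1-a}$ is bounded by $1$, and then $r^{-a}(1+r)^{-s}\leq C(1+r)^{-a-s}=C(1+r)^{s-1}$, using $a=1-2s$. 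This yields the first estimate for $u$; the argument for $v$ is identical.

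For the angular derivative, multiply by $(\sin\theta)^a/r$ and split again:
\[
\frac{(\sin\theta)^a}{r}\Big|\frac{\partial u}{\partial\theta}\Big|\leq (\sin\theta)^{a+1}\Big|\frac{\partial u}{\partial x}\Big|+(\sin\theta)^a\cos\theta\Big|\frac{\partial u}{\partial y}\Big|.
\]
Since $a+1=2-2s>0$, the first term is controlled by $C(1+r)^{s-1}$. For the second term,
\[
(\sin\theta)^a\Big|\frac{\partial u}{\partial y}\Big|=(\sin\theta)^a y^{-a}\big|y^a\partial_y u\big|=r^{-a}\big|y^a\partial_y u\big|\leq C\,r^{-a}(1+r)^{-s},
\]
which, as above, is bounded by $C(1+r)^{s-1}$ for all $r\geq 0$ (handling the small-$r$ regime with a trivial adjustment). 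The analogous bound for $v$ follows verbatim.

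There is no serious obstacle here; the only subtle point is verifying that the exponents on $\sin\theta$ are nonnegative throughout (so that $(\sin\theta)^{1-a}$ and $(\sin\theta)^{a+1}$ are uniformly bounded near $\theta=0,\pi$), which is precisely the condition $s\in(0,1)$, i.e.\ $a\in(-1,1)$.
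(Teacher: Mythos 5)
Your proof is correct and follows the same route as the paper: change to polar coordinates, rewrite $\partial_r$ and $r^{-1}\partial_\theta$ in terms of $\partial_x$ and the weighted derivative $y^a\partial_y$, then use the Cartesian estimates from Proposition \ref{gradient upper bound} together with the boundedness of $(\sin\theta)^{1-a}$ and $(\sin\theta)^{1+a}$ (valid since $a\in(-1,1)$). Your write-up is in fact a bit more explicit than the paper's about the $r^{-a}$ factor that arises when passing from $y^a\partial_y u$ to $(\sin\theta)^a\partial_y u$.
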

\begin{proof}
We have
\[\frac{\partial u}{\partial r}=\cos\theta\frac{\partial u}{\partial x}+\left(\sin\theta\right)^{1-a}\left(\sin\theta\right)^{a}\frac{\partial u}{\partial y},\]
\[\frac{\left(\sin\theta\right)^a}{r}\frac{\partial u}{\partial \theta}(r,\theta)=-\left(\sin\theta\right)^{1+a}\frac{\partial u}{\partial x}+\cos\theta\left(\sin\theta\right)^{a}\frac{\partial u}{\partial y}.\]
Since $1+a>0$ and $1-a>0$, $\left(\sin\theta\right)^{1-a}$ and $\left(\sin\theta\right)^{1+a}$ are bounded. Then this corollary follows from the previous proposition.
\end{proof}

Now we can give a further decay estimate for $\frac{\partial v}{\partial x}$ and $\frac{\partial u}{\partial x}$.
\begin{prop}\label{gradient decay estimate}
For all $x>0$, $\big|\frac{\partial v}{\partial x}(x,0)\big|\leq C(1+x)^{-3s-1}$. For all $x<0$, $\big|\frac{\partial u}{\partial x}(x,0)\big|\leq C(1+|x|)^{-3s-1}$.
\end{prop}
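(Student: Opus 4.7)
The plan is to adapt the argument of Proposition \ref{decay estimate} to the tangential derivative of $v$; I focus on the bound for $\frac{\partial v}{\partial x}(x,0)$ for $x > 0$, since the estimate for $\frac{\partial u}{\partial x}$ is symmetric. With the rescaling $u^\lambda(z) = \lambda^{-s}u(\lambda z)$, $v^\lambda(z) = \lambda^{-s}v(\lambda z)$, the goal becomes $\big|\frac{\partial v^\lambda}{\partial x}(1,0)\big| \leq C\lambda^{-4s}$, which rescales back to the claimed bound via $\frac{\partial v}{\partial x}(\lambda,0) = \lambda^{s-1}\frac{\partial v^\lambda}{\partial x}(1,0)$.

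First I would differentiate the boundary equation $\partial_y^a v^\lambda = \lambda^{4s}(u^\lambda)^2 v^\lambda$ in $x$. Setting $w := \frac{\partial v^\lambda}{\partial x}$, one obtains
\begin{equation*}
L_a w = 0 \text{ in } B_{1/2}^+(1,0), \qquad \partial_y^a w = \lambda^{4s}(u^\lambda)^2 w + f \text{ on } \partial^0 B_{1/2}^+(1,0),
\end{equation*}
with $f := 2\lambda^{4s} u^\lambda v^\lambda \frac{\partial u^\lambda}{\partial x}$. The inhomogeneous term $f$ is uniformly bounded in $\lambda$, since $u^\lambda$ is bounded by Proposition \ref{upper bound}, $\big|\frac{\partial u^\lambda}{\partial x}\big|$ is bounded by Proposition \ref{gradient upper bound}, and Proposition \ref{decay estimate} gives $v^\lambda \leq C\lambda^{-4s}$ on $\partial^0 B_{1/2}^+(1,0)$, exactly cancelling the prefactor $\lambda^{4s}$. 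As in the proof of Proposition \ref{decay estimate}, one also has $u^\lambda \geq c > 0$ on $\partial^0 B_{1/2}^+(1,0)$ for $\lambda$ large, so the Robin coefficient is $\geq c\lambda^{4s}$.

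The central step is to absorb the bounded inhomogeneous term so that Lemma \ref{lem decay} applies. Pick $C_0 \geq c^{-2}\|f\|_\infty$ and set $\widetilde w := \big(w - C_0\lambda^{-4s}\big)_+$; since $w$ is $L_a$-harmonic, $\widetilde w$ is $L_a$-subharmonic. On $\{\widetilde w > 0\} \cap \partial^0 B_{1/2}^+(1,0)$ the boundary equation rearranges to
\begin{equation*}
\partial_y^a \widetilde w = \lambda^{4s}(u^\lambda)^2 \widetilde w + \bigl(C_0(u^\lambda)^2 + f\bigr) \geq \lambda^{4s}(u^\lambda)^2 \widetilde w \geq c\lambda^{4s}\widetilde w,
\end{equation*}
by the choice of $C_0$. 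Since $\widetilde w$ is also uniformly bounded above by Proposition \ref{gradient upper bound}, Lemma \ref{lem decay} yields $\sup_{\partial^0 B_{1/4}^+(1,0)} \widetilde w \leq C\lambda^{-4s}$, hence $w(1,0) \leq (C+C_0)\lambda^{-4s}$. Applying the same argument to $\big(-w - C_0\lambda^{-4s}\big)_+$ gives the matching lower bound, so $|w(1,0)| \leq C\lambda^{-4s}$ and the desired estimate follows.

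The delicate point is exactly this absorption: it works because the Robin coefficient $\lambda^{4s}(u^\lambda)^2$ is simultaneously very large and bounded away from zero, so a constant shift of size $\lambda^{-4s}$ suffices to dominate $f$ pointwise on the boundary. In particular, without the sharp decay $v^\lambda = O(\lambda^{-4s})$ established in Proposition \ref{decay estimate}, the term $f$ would not even be uniformly bounded, and the scheme would collapse; this is why the gradient decay must come a step after the function decay.
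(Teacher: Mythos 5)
Your proof is correct and follows essentially the same route as the paper: differentiate the boundary condition, observe that the sharp decay $v^\lambda = O(\lambda^{-4s})$ from Proposition \ref{decay estimate} makes the cross term bounded, note the Robin coefficient $\lambda^{4s}(u^\lambda)^2 \geq c\lambda^{4s}$ is large, and invoke Lemma \ref{lem decay}. The one place you are a bit more careful than the printed argument is the constant shift $\widetilde w = (w - C_0\lambda^{-4s})_+$ used to absorb the bounded inhomogeneous term before applying Lemma \ref{lem decay}; the paper's displayed inequality (which appears to carry a spurious factor of $(\partial_x v^\lambda)_+$ on the cross term) glosses over this point, whereas your shift --- the same device the paper itself uses in the proof of Lemma \ref{lem 7.2} --- makes the absorption explicit and rigorous.
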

\begin{proof}
We use notations introduced in the proof of Proposition \ref{decay estimate}.

By differentiating the equation for $v^\lambda$, we obtain
\begin{equation*}
\left\{
\begin{aligned}
 &L_a \left(\frac{\partial v^\lambda}{\partial x}\right)_+\geq0, \ \mbox{in}\ B_{1/2}^+(1,0), \\
 &\partial_y^a\left(\frac{\partial v^\lambda}{\partial x}\right)_+\geq\lambda^{4s}\left(u^\lambda \right)^2
 \left(\frac{\partial v^\lambda}{\partial x}\right)_+-2\lambda^{4s}u^\lambda v^\lambda \frac{\partial u^\lambda}{\partial x}
 \left(\frac{\partial v^\lambda}{\partial x}\right)_+, \ \mbox{on}\ \partial B_{1/2}^+(1,0).
                          \end{aligned} \right.
\end{equation*}

By Proposition \ref{decay estimate}, $v^\lambda\leq C\lambda^{-4s}$ on $\partial^0B_{1/2}^+(1,0)$. By the previous proposition $|\frac{\partial u^\lambda}{\partial x}|\leq C$ in $\overline{B_{1/2}^+(1,0)}$. Lemma \ref{sharp lower bound 3} also implies that $u^\lambda\geq c$ in $\overline{B_{1/2(1,0)}^+}$.
 Hence  on $\partial^0 B_{1/2}^+(1,0)$,
\[\partial_y^a\left(\frac{\partial v^\lambda}{\partial x}\right)_+
\geq \left(c\lambda^{4s}-C\right)\left(\frac{\partial
v^\lambda}{\partial x}\right)_+.\]  Applying Lemma \ref{lem decay},
we get
\[\frac{\partial v^\lambda}{\partial x}(1,0)\leq C\lambda^{-4s}.\]
The same estimate holds for the negative part. This then implies the
bound for $\big|\frac{\partial v}{\partial x}(\lambda,0)\big|$.
\end{proof}

\section{Refined asymptotics at infinity}
\setcounter{equation}{0}

In this section we prove a refined asymptotic expansion of the solution $(u, v)$. See Proposition \ref{convergence rate improved} below. Here we need $s>\frac{1}{4}$.  The refined asymptotic is needed for the method of moving planes in the next section.

\subsection{}

Let
\[x=e^t\cos\theta,\ \ \ y=e^t\sin\theta, \ \ \ t\in\R, \ \theta\in[0,\pi]\]
and
\[\bar{u}(t,\theta)=e^{-st}u(e^t\cos\theta, e^t\sin\theta),\ \ \ \bar{v}(t,\theta)=e^{-st}v(e^t\cos\theta, e^t\sin\theta).\]
The equation \eqref{equation} can be transformed to the one for $(\bar{u},\bar{v})$,
\begin{equation}\label{Emden-Fowler}
\left\{
\begin{aligned}
 &\bar{u}_{tt}+\bar{u}_t+s(1-s)\bar{u}+L^a_\theta\bar{u}=0, \ \mbox{in}\ (-\infty,+\infty)\times(0,\pi), \\
 &\bar{v}_{tt}+\bar{v}_t+s(1-s)\bar{v}+L^a_\theta\bar{v}=0, \ \mbox{in}\ (-\infty,+\infty)\times(0,\pi), \\
 &\lim_{\theta\to 0\ or \ \pi}\partial_\theta^a\bar{u}=\pm e^{(4-a)st}\bar{u}\bar{v}^2, \ \mbox{on}\ (-\infty,+\infty)\times\{0,\pi\},\\
  &\lim_{\theta\to 0\ or \ \pi}\partial_\theta^a\bar{v}=\pm e^{(4-a)st}\bar{v}\bar{u}^2, \ \mbox{on}\ (-\infty,+\infty)\times\{0,\pi\},
                          \end{aligned} \right.
\end{equation}
where we take the positive sign $+$ at $\{0\}$ and the negative one $-$ at $\{\pi\}$.

By Proposition \ref{upper bound},
\begin{equation}\label{uniform upper bound}
0\leq\bar{u},\ \bar{v}\leq C,\  \ \mbox{in}\ [1,+\infty)\times[0,\pi].
\end{equation}
By Proposition \ref{decay estimate},
\begin{equation}\label{boundary condition 1}
\left\{
\begin{aligned}
 &\bar{u}\leq Ce^{-4st}, \ \mbox{on}\ [1,+\infty)\times\{\pi\},\\
  &\bar{v}\leq Ce^{-4st}, \ \mbox{on}\ [1,+\infty)\times\{0\}.
                          \end{aligned} \right.
\end{equation}
Combining Proposition \ref{upper bound} and Proposition \ref{decay estimate}, we also have
\begin{equation}\label{boundary condition 2}
\left\{
\begin{aligned}
 &0\leq\partial_\theta^a\bar{u}\leq C e^{-(4+a)st}, \ \mbox{on}\ [1,+\infty)\times\{0\}\\
  &0\geq\partial_\theta^a\bar{v}\geq-C e^{-(4+a)st}, \ \mbox{on}\ [1,+\infty)\times\{\pi\}.
                          \end{aligned} \right.
\end{equation}

In Remark \ref{rmk 1}, we have shown that
\begin{lem}
As $t\to+\infty$, $\bar{u}(t,\theta)\to (\cos\frac{\theta}{2})^{2s}$ and $\bar{v}(t,\theta)\to (\sin\frac{\theta}{2})^{2s}$ uniformly in $[0,\pi]$.
\end{lem}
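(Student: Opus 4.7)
The plan is to recognize this lemma as a direct restatement of the blow-down convergence already established in Remark \ref{rmk 1}, evaluated on the unit half-sphere. Writing $R = e^t$ and using the scaling $u^R(z) := R^{-s}u(Rz)$, $v^R(z) := R^{-s}v(Rz)$, the Emden--Fowler variables satisfy the identification
\[
\bar{u}(t,\theta) = u^R(\cos\theta,\sin\theta), \qquad \bar{v}(t,\theta) = v^R(\cos\theta,\sin\theta),
\]
so that $t\to+\infty$ corresponds to $R\to+\infty$ and the claim reduces to the convergence of $(u^R,v^R)$ on the arc $\partial^+B_1^+$.

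First I would invoke Remark \ref{rmk 1}: after the scaling normalization $(u,v)\mapsto(\lambda^s u(\lambda\cdot),\lambda^s v(\lambda\cdot))$ that fixes $b=1$, any sequence $R_i\to+\infty$ has a subsequence along which
\[
u^{R_i}\longrightarrow r^s\!\left(\cos\tfrac{\theta}{2}\right)^{2s},\qquad v^{R_i}\longrightarrow r^s\!\left(\sin\tfrac{\theta}{2}\right)^{2s},
\]
in $C(\overline{B_1^+})\cap H^{1,a}(B_1^+)$; moreover the limiting constant $b$ is shown there to be \emph{independent} of the subsequence. Since the whole family $\{(u^R,v^R)\}_{R\ge 1}$ is equicontinuous on $\overline{B_1^+}$ (by the uniform $C^\alpha$ estimate from \cite{TVZ} combined with Proposition \ref{upper bound}), the uniqueness of the subsequential limit upgrades subsequential convergence to full convergence as $R\to+\infty$.

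Next I would restrict the $C^0(\overline{B_1^+})$ convergence to the compact arc $\{(\cos\theta,\sin\theta):\theta\in[0,\pi]\}\subset\partial^+B_1^+$. This immediately gives
\[
\sup_{\theta\in[0,\pi]}\bigl|\bar{u}(t,\theta)-(\cos\tfrac{\theta}{2})^{2s}\bigr|+\bigl|\bar{v}(t,\theta)-(\sin\tfrac{\theta}{2})^{2s}\bigr|\;\longrightarrow\;0\quad\text{as }t\to+\infty,
\]
which is exactly the uniform convergence asserted by the lemma.

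There is essentially no obstacle: all the analytic heavy lifting (the sharp lower bound of Proposition \ref{lower bound} that justifies the polynomial scale $R^s$, the classification of the blow-down profile in Section 5, the uniform Hölder regularity used for equicontinuity, and the uniqueness of the limit along all sequences $R_i\to\infty$ via the monotonicity of $J(R)$) has already been carried out. The only thing to be careful about is to cite Remark \ref{rmk 1} \emph{after} the normalization $b=1$ has been fixed, so that the stated profiles $(\cos\theta/2)^{2s}$ and $(\sin\theta/2)^{2s}$ appear without an undetermined multiplicative constant.
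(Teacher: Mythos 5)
Your proposal is correct and coincides with the paper's own (terse) argument: the paper simply states ``In Remark \ref{rmk 1}, we have shown that'' and lets that remark stand as the proof, which is exactly your identification $\bar{u}(t,\theta)=u^{e^t}(\cos\theta,\sin\theta)$ restricted to the arc $\partial^+B_1^+$, together with the subsequence-independence of the limit established there. Your fuller write-out of the equicontinuity-plus-unique-limit step and of the $b=1$ normalization is just making the paper's implicit reasoning explicit.
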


The next task is to get an exact convergence rate.
\begin{prop}\label{convergence rate}
There exists a constant $C$ so that
\[|u(r,\theta)-r^s(\cos\frac{\theta}{2})^{2s}|+|v(r,\theta)-r^s(\sin\frac{\theta}{2})^{2s}|\leq C(1+r)^{s-\min\{1,4s\}}.\]
\end{prop}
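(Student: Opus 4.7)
The plan is to work in the Emden-Fowler variables $(t,\theta)$ and set $U := \bar u - \phi_0$, $V := \bar v - \psi_0$, where $\phi_0(\theta) = (\cos(\theta/2))^{2s}$ and $\psi_0(\theta) = (\sin(\theta/2))^{2s}$; Lemma 8.1 gives $U,V \to 0$ uniformly as $t \to +\infty$, and the proposition is equivalent to the quantitative bound $|U|+|V| \leq Ce^{-\min\{1,4s\}t}$. Since $(\phi_0,\psi_0)$ solve the $t$-independent angular eigenvalue equation $L^a_\theta\phi + s(1-s)\phi = 0$, both $U$ and $V$ satisfy the linear homogeneous equation
\[U_{tt} + U_t + s(1-s)U + L^a_\theta U = 0\]
in $(t_0,\infty)\times(0,\pi)$. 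Combining $\phi_0(0)=1$, $\psi_0(0)=0$, $\partial^a_\theta\phi_0(0)=0$ with the decay estimates of Propositions \ref{decay estimate} and \ref{gradient decay estimate} and the boundary equation for $(\bar u,\bar v)$, the boundary data for $U$ reduce to mixed (Neumann at $0$, Dirichlet at $\pi$) conditions with inhomogeneity of order $e^{-4st}$, and symmetrically for $V$.

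The rate $\min\{1,4s\}$ then arises from a competition between boundary source strength $e^{-4st}$ and the spectral gap of the angular operator. Consider the singular Sturm-Liouville problem $-L^a_\theta\varphi_n = \mu_n\varphi_n$ on $(0,\pi)$ with weight $(\sin\theta)^a$ and mixed conditions $\partial^a_\theta\varphi_n(0)=0$, $\varphi_n(\pi)=0$. The positive function $\phi_0$ is the ground state with eigenvalue $s(1-s)$, so by standard Sturm-Liouville theory every subsequent eigenvalue satisfies $\mu_n > s(1-s)$ strictly. Expanding $U(t,\theta) = \sum_n c_n(t)\varphi_n(\theta)$, testing the PDE against $\varphi_n$, and integrating by parts twice (collecting the boundary contributions from the inhomogeneous data) yields the scalar ODE
\[c_n''(t) + c_n'(t) + (s(1-s) - \mu_n)c_n(t) = f_n(t), \qquad |f_n(t)| \leq C_n e^{-4st}.\]

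For $n=0$ the homogeneous ODE is $c_0''+c_0'=0$ with characteristic roots $\{0,-1\}$; the uniform convergence $c_0(t)\to 0$ eliminates the constant mode, so $c_0(t) = B_0 e^{-t} + O(e^{-4st})$, the particular solution being non-resonant because the assumption $s>1/4$ forces $4s\neq 1$. For $n\geq 1$, the characteristic roots $\lambda_n^\pm = \tfrac{1}{2}(-1 \pm \sqrt{1 + 4(\mu_n - s(1-s))})$ satisfy $\lambda_n^- < -1$; selecting the bounded homogeneous solution gives $c_n(t) = O(e^{\lambda_n^- t}) + O(e^{-4st})$. In both cases $|c_n(t)| \leq C_n e^{-\min\{1,4s\}t}$. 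Summing the Fourier series and using standard interior-plus-boundary elliptic regularity to upgrade weighted-$L^2$-in-$\theta$ control to $L^\infty$ control on local time slabs $[t,t+1]\times[0,\pi]$, then transforming back to the original $(r,\theta)$ variables, yields the proposition.

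The main technical obstacle is controlling the higher-mode tails uniformly so that the constants $C_n$ do not ruin the rate during the $L^2\to L^\infty$ upgrade, and ensuring that no source term resonates with a homogeneous decaying mode (which is precisely why $s>1/4$ is imposed). A more PDE-flavored alternative, avoiding the full spectral decomposition, is to construct an explicit barrier of the form $W(t,\theta) = Ae^{-t}\phi_0(\theta) + Be^{-4st}\eta(\theta)$ with $\eta$ a suitable smooth angular cutoff, and compare with $\pm U$, $\pm V$ via the maximum principle for the linear mixed-boundary-value problem on the half-strip; verifying the super-solution property still reduces to the spectral gap $\mu_1 > s(1-s)$ together with the boundary decay rate $e^{-4st}$.
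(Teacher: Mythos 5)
Your proposal identifies exactly the key mechanism of the paper's proof: pass to Emden--Fowler variables, project onto the first angular eigenfunction $\phi_0=(\cos\tfrac{\theta}{2})^{2s}$, obtain an ODE of the form $c_0''+c_0'=O(e^{-4st})$, and read off the rate $\min\{1,4s\}$ from the competition between the decaying homogeneous mode $e^{-t}$ and the boundary source $e^{-4st}$. That is precisely what the paper does for the first mode (its function $f(t)=\int_0^\pi\psi\,\phi_0(\sin\theta)^a d\theta$ is your $c_0(t)$).

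Where your plan diverges, there is an unresolved gap. You propose a full eigenfunction expansion $U=\sum c_n\varphi_n$ in the mixed Sturm--Liouville basis and then sum. But $U$ does not satisfy the homogeneous mixed conditions (it has $U(t,\pi)=O(e^{-4st})\neq 0$), and the boundary contribution to $f_n$ coming from $\theta=\pi$ carries a factor $\partial^a_\theta\varphi_n(\pi)$, which grows like $\sqrt{\mu_n}$. So the constants $C_n$ are not uniformly bounded, and the convergence/$L^2\to L^\infty$ upgrade step is not justified. You correctly flag this as ``the main technical obstacle,'' but neither the spectral route nor the sketched barrier alternative is carried out, and this is exactly the part that cannot be taken for granted. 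The paper avoids the issue entirely: it \emph{only} projects onto $\phi_0$ to get a weighted $L^1$-in-$\theta$ bound of order $e^{-\sigma t}$, upgrades that to an $L^\infty$ bound on $(0,\pi-h)$ simply by dividing by $\min_{(0,\pi-h)}\phi_0$, and then handles the corner $(\pi-h,\pi)$ by a separate second-moment ODE comparison: setting $g(t)=\int_{\pi-h}^\pi\varphi^2(\sin\theta)^a$ for a truncated error $\varphi$ vanishing on both angular boundaries, using a Poincar\'e inequality to get a spectral gap of size $c/h^2$, and choosing $h$ small enough that this gap beats $4\sigma^2$. No control of higher modes is ever needed.

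One smaller point: invoking $s>1/4$ here is out of place. Proposition~\ref{convergence rate} does not assume $s>1/4$ (that hypothesis enters only in the refined expansion of the next subsection), and in any case the potential resonance $4s=1$ is a single value which both arguments tacitly exclude (the paper's own ODE comparison also divides by $1-4s$).
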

In the following we denote
\[\sigma:=\min\{1,4s\}.\]
\begin{proof}
Let $\phi(t,\theta):=\bar{u}(t,\theta)-(\cos\frac{\theta}{2})^{2s}$. There exists a constant $M$ such that
\begin{equation}\label{Emden-Fowler 2}
\left\{
\begin{aligned}
 &\phi_{tt}+\phi_t+s(1-s)\phi+L_\theta^a\phi=0, \ \mbox{in}\ [1,+\infty)\times(0,\pi), \\
 &0\leq\phi(t,\pi)\leq Me^{-4st},\\
  &0\leq\partial_\theta^a\phi(t,0)\leq Me^{-(4+a)st}.
                          \end{aligned} \right.
\end{equation}
Moreover, $|\phi|\leq M$ in $[1,+\infty)\times[0,\pi]$ and it converges to $0$ uniformly as $t\to+\infty$.

Let $\psi(t,\theta):=\left(\phi(t,\theta)-Me^{-4st}\right)_+$. It satisfies
\begin{equation}\label{Emden-Fowler 3}
\left\{
\begin{aligned}
 &\psi_{tt}+\psi_t+s(1-s)\psi+L_\theta^a\psi\geq-s(1-s)Me^{-4st}, \ \mbox{in}\ [1,+\infty)\times(0,\pi), \\
 &\psi(t,\pi)=0,\\
  &\partial_\theta^a\psi(t,0)\geq 0.
                          \end{aligned} \right.
\end{equation}
We still have $0\leq\psi\leq M$ in $[1,+\infty)\times[0,\pi]$, and it converges to $0$ uniformly as $t\to+\infty$.

Define
$$f(t):=\int_0^\pi\psi(t,\theta)\left(\cos\frac{\theta}{2}\right)^{2s}\left(\sin\theta\right)^ad\theta\geq 0.$$
Multiplying \eqref{Emden-Fowler 3} by $\left(\cos\frac{\theta}{2}\right)^{2s}$ and integrating on $(0,\pi)$ with respect to the measure $\left(\sin\theta\right)^ad\theta$, we obtain
\begin{equation}\label{7.1}
f^{\prime\prime}(t)+f^\prime(t)\geq f^{\prime\prime}(t)+f^\prime(t)-\partial^a_\theta\psi(t,0)\geq-Ce^{-4st}.
\end{equation}

This implies that
\[\left(e^tf^\prime(t)+\frac{C}{1-4s}e^{(1-4s)t}\right)^\prime\geq0.\]
Consequently,
\[e^tf^\prime(t)+\frac{C}{1-4s}e^{(1-4s)t}\geq ef^\prime(1)+\frac{C}{1-4s}e^{1-4s}\geq -C, \ \ \forall \ t\geq 1.\]
Hence we have
\[f^\prime(t)\geq -Ce^{-t}-\frac{C}{1-4s}e^{-4st}\ \ \ \mbox{on}\ [1,+\infty).\]

Integrating from $t$ to $+\infty$, we obtain
\[f(t)\leq\int_t^{+\infty}\left(f^{\prime}(s)\right)_-ds\leq Ce^{-\sigma t}, \ \ \forall \ t\in[1,+\infty).\]
A similar estimate holds for $[\bar{u}(t,\theta)-(\cos\frac{\theta}{2})^{2s}+Me^{-4st}]_-$.

Now we have got, for all $t\geq1$,
\[\int_0^\pi\Big|\bar{u}(t,\theta)-\left(\cos\frac{\theta}{2}\right)^{2s}\Big|
\left(\cos\frac{\theta}{2}\right)^{2s}\left(\sin\theta\right)^ad\theta\leq Ce^{-\sigma t}.\]
Then by standard estimates we get, for any $h>0$,
\[\sup_{\theta\in(0,\pi-h)}\big|\bar{u}(t,\theta)-\left(\cos\frac{\theta}{2}\right)^{2s}\big|\leq \frac{C}{h}e^{-\sigma t}.\]

Next we extend this bound to $(\pi-h,\pi)$. Let
$$\varphi:=\left(\phi-\max\{\frac{C}{h}e^{-\sigma t}, Me^{-4st}\}\right)_+,\ \ \ \mbox{on} \ [1,+\infty)\times[\pi-h,\pi].$$
It satisfies
\begin{equation}\label{Emden-Fowler 4}
\left\{
\begin{aligned}
 &\varphi_{tt}+\varphi_t+s(1-s)\varphi+L_\theta^a\varphi\geq-Ce^{-\sigma t}, \ \mbox{in}\ (1,+\infty)\times(\pi-h,\pi), \\
 &\varphi(t,\pi)=\varphi(t,\pi-h)=0.
                          \end{aligned} \right.
\end{equation}
Let
\[g(t):=\int_{\pi-h}^\pi\varphi(t,\theta)^2\left(\sin\theta\right)^ad\theta.\]
We claim that the following Poincare inequality holds.

{\bf Claim.} There exists a constant $c$ so that \[\frac{\int_{\pi-h}^\pi\left(\frac{\partial\varphi}{\partial\theta}(t,\theta)\right)^2\left(\sin\theta\right)^ad\theta}
{\int_{\pi-h}^\pi\varphi(t,\theta)^2\left(\sin\theta\right)^ad\theta}
\geq\frac{c}{h^2}.\]
This is because the left hand side can be bounded from below by
\[c\min_{\eta\in C_0^\infty((0,h))}\frac{\int_0^hx^a\eta^\prime(x)^2dx}
{\int_0^hx^a\eta(x)^2dx}\geq \frac{c}{h^2}\min_{\eta\in C_0^\infty((0,1))}\frac{\int_0^1x^a\eta^\prime(x)^2dx}
{\int_0^1x^a\eta(x)^2dx}.\]

Multiplying \eqref{Emden-Fowler 4} by $\varphi\left(\sin\theta\right)^a$ and integrating on $(\pi-h,\pi)$ leads to
\[g^{\prime\prime}(t)+g^\prime(t)-\left[\frac{c}{h^2}-s(1-s)\right]g(t)\geq -Ce^{-\sigma t}g(t)^{\frac{1}{2}}\geq
-Ce^{-2\sigma t}-s(1-s)g(t).\]
Thus
\[g^{\prime\prime}(t)+g^\prime(t)-\left[\frac{c}{h^2}-2s(1-s)\right]g(t)\geq
-Ce^{-2\sigma t}.\]
Now we fix an $h$ small so that
\[\frac{c}{h^2}-2s(1-s)>4\sigma^2 .\]
Because $g(t)\to0$ as $t\to+\infty$, by the comparison principle,
\[g(t)\leq C\left(e^{-2\sigma t}+e^{-\frac{1+\sqrt{1+4\left(\frac{c}{h^2}-2s(1-s)\right)}}{2}t}\right)\leq Ce^{-2\sigma t}.\]
Then standard elliptic estimates imply that
\begin{eqnarray*}
\sup_{[\pi-h,\pi]}\Big|\bar{u}(t,\theta)-\left(\cos\frac{\theta}{2}\right)^{2s}\Big|&\leq& \max\{\frac{C}{h}e^{-\sigma t}, Me^{-4st}\}+Ce^{-\sigma t}\\
&\leq& Ce^{-\sigma t}.
\end{eqnarray*}

Coming back to $u$ this gives the claimed estimate.
\end{proof}

\subsection{}
Now consider $\bar{u}_t$. By differentiation in $t$, $\bar{u}_t$ still satisfies the equation in \eqref{Emden-Fowler}.
Moreover, we have the following boundary conditions. At $\theta=\pi$, by Proposition \ref{decay estimate} and Proposition \ref{gradient decay estimate},
\[
\bar{u}_t(t,\pi)=-se^{-st}u(e^t,\pi)+e^{(1-s)t}u_r(e^t,\pi)=O(e^{-4st}).\]

Note that $\bar{u}_t(t,0)$ is bounded in $t\in[1,+\infty)$, which can be deduced from Proposition \ref{decay estimate} and Proposition \ref{gradient upper bound}. At $\theta=0$, because $|\bar{v}(t,0)|+|\bar{v}_t(t,0)|\leq Ce^{-4st}$,
\begin{eqnarray*}
\partial_\theta^a\bar{u}_t&=&\left(4-a\right)se^{(4-a)st}\bar{u}\bar{v}^2+e^{(4-a)st}\bar{v}^2\bar{u}_t
+2e^{(4-a)st}\bar{u}\bar{v}\bar{v}_t\\
&=&O(e^{-(4+a)st}).
\end{eqnarray*}

Then as in the proof of the previous section, we have
\begin{lem}\label{lem bound on ut}
As $t\to+\infty$,
\[\sup_{\theta\in[0,\pi]}\big|\bar{u}_t(t,\theta)\big|+\big|\bar{v}_t(t,\theta)\big|\leq Ce^{-\min\{1,4s,(4+a)s\}t}.\]
\end{lem}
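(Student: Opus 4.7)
The plan is to apply the same two-stage template as Proposition \ref{convergence rate}, now to $\bar u_t$ (and symmetrically to $\bar v_t$). Differentiating the interior equation \eqref{Emden-Fowler} in $t$ shows that $\bar u_t$ satisfies the identical linear equation
\[
(\bar u_t)_{tt} + (\bar u_t)_t + s(1-s)\bar u_t + L^a_\theta \bar u_t = 0 \qquad \text{on } [1,+\infty)\times(0,\pi),
\]
and the boundary bounds $|\bar u_t(t,\pi)| \leq Ce^{-4st}$ and $|\partial^a_\theta \bar u_t(t,0)| \leq Ce^{-(4+a)st}$ are exactly those already computed in the discussion preceding the lemma. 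In addition, interior elliptic regularity applied to the time-translations $\bar u(\cdot + T,\cdot)$, which converge uniformly to the $t$-independent limit $\left(\cos\tfrac{\theta}{2}\right)^{2s}$ by Proposition \ref{convergence rate}, upgrades that convergence to give $\bar u_t(t,\theta) \to 0$ uniformly on $[0,\pi]$ as $t \to \infty$.

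\textbf{Stage one: weighted spherical average.} Let
\[
F(t) := \int_0^\pi \bar u_t(t,\theta) \left(\cos\tfrac{\theta}{2}\right)^{2s} \left(\sin\theta\right)^a d\theta.
\]
Multiplying the PDE by $\left(\cos\tfrac{\theta}{2}\right)^{2s} (\sin\theta)^a$, integrating twice by parts on $(0,\pi)$, and using $L^a_\theta\left(\cos\tfrac{\theta}{2}\right)^{2s} = -s(1-s)\left(\cos\tfrac{\theta}{2}\right)^{2s}$ to cancel the zero-order term, yields
\[
F''(t) + F'(t) = B(t), \qquad |B(t)| \leq C e^{-\min\{4s,(4+a)s\}t},
\]
where the forcing $B(t)$ collects the two endpoint contributions from integration by parts. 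The characteristic roots of the homogeneous part are $0$ and $-1$; because $F(t) \to 0$ (by uniform $\bar u_t \to 0$), the constant mode drops out, and variation of parameters produces $|F(t)| \leq C e^{-\sigma_1 t}$ with $\sigma_1 = \min\{1,4s,(4+a)s\}$.

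\textbf{Stage two: upgrade to pointwise bound.} This is the second half of the proof of Proposition \ref{convergence rate}, applied verbatim. Interior elliptic regularity transfers the $L^1$ decay of $F$ to a pointwise bound $|\bar u_t(t,\theta)| \leq Ce^{-\sigma_1 t}$ on $[0,\pi-h]$ for any fixed small $h>0$. On the remaining cap $[\pi-h,\pi]$, where the test function $\left(\cos\tfrac{\theta}{2}\right)^{2s}$ degenerates, I would work with the two truncated nonnegative functions $\left(\pm \bar u_t - Ce^{-4st}\right)_+$, each of which vanishes at $\theta = \pi$ thanks to the boundary bound and satisfies the same linear PDE up to a forcing of size $O(e^{-4st})$. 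The Poincaré inequality on $[\pi-h,\pi]$ with constant of order $1/h^2$ (which exceeds $s(1-s)$ once $h$ is small) then gives a weighted-$L^2$ decay of order $e^{-\sigma_1 t}$ on the cap, upgraded to $L^\infty$ by elliptic estimates. The argument for $\bar v_t$ is symmetric, with $\left(\sin\tfrac{\theta}{2}\right)^{2s}$ in place of $\left(\cos\tfrac{\theta}{2}\right)^{2s}$ and the two boundaries swapped.

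The main obstacle I expect is the absence of a definite sign on $\bar u_t$, which forces the cap argument to be applied to $\left(\bar u_t - Ce^{-4st}\right)_+$ and $\left(-\bar u_t - Ce^{-4st}\right)_+$ separately. In Proposition \ref{convergence rate} the truncation on $\phi = \bar u - \left(\cos\tfrac{\theta}{2}\right)^{2s}$ sufficed because $\phi \to 0$ and the single-sided argument bootstrapped through both signs via the boundary bounds; here the symmetric double application is a bookkeeping modification only, and the Poincaré/comparison machinery is otherwise unchanged.
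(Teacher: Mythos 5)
Your overall strategy is the one the paper intends: the paper's ``proof'' of this lemma is a single sentence (``as in the proof of the previous section''), so one simply reruns the two-stage argument of Proposition~\ref{convergence rate} on $\bar u_t$, using the boundary decay bounds $|\bar u_t(t,\pi)|\leq Ce^{-4st}$ and $|\partial^a_\theta\bar u_t(t,0)|\leq Ce^{-(4+a)st}$ computed just before the lemma; and your identification of the decay exponent $\sigma_1=\min\{1,4s,(4+a)s\}$ from the characteristic roots of $F''+F'=B$ together with the scale of the forcing is the right count of where each rate enters.

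However, there is a genuine gap in the bridge from stage one to stage two. The quantity $F(t)=\int_0^\pi\bar u_t\,\left(\cos\tfrac{\theta}{2}\right)^{2s}(\sin\theta)^a\,d\theta$ is a \emph{signed} pairing --- the projection of $\bar u_t$ onto the first eigenfunction --- and its decay does not control $\int_0^\pi|\bar u_t|\left(\cos\tfrac{\theta}{2}\right)^{2s}(\sin\theta)^a\,d\theta$ (any contribution from a higher eigenmode is annihilated by this pairing). Yet stage two asserts ``interior elliptic regularity transfers the $L^1$ decay of $F$ to a pointwise bound,'' which presumes exactly that $L^1$ control. In the proof of Proposition~\ref{convergence rate} what is averaged against $\left(\cos\tfrac{\theta}{2}\right)^{2s}(\sin\theta)^a$ is not the signed error $\phi$ but the truncated \emph{nonnegative} function $\psi=\left(\phi-Me^{-4st}\right)_+$ (and then separately its negative counterpart); because those are nonnegative, the resulting $f(t)$ genuinely is an $L^1$ quantity and the transfer to a pointwise bound on $(0,\pi-h)$ makes sense. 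You do eventually invoke the two truncations $\left(\pm\bar u_t-Ce^{-4st}\right)_+$, but only for the cap $[\pi-h,\pi]$, whereas they are already needed in stage one over the whole interval. Concretely, set $\mu:=\min\{4s,(4+a)s\}$ and replace $F$ by $F^\pm(t):=\int_0^\pi\left(\pm\bar u_t-Me^{-\mu t}\right)_+\left(\cos\tfrac{\theta}{2}\right)^{2s}(\sin\theta)^a\,d\theta\geq 0$: these vanish at $\theta=\pi$ by the boundary bound (since $\mu\leq 4s$), the remaining boundary term at $\theta=0$ is bounded by $Ce^{-(4+a)s t}\leq Ce^{-\mu t}$ though it no longer has a favorable sign, the differential inequality $F^\pm{}''+F^\pm{}'\geq -Ce^{-\mu t}$ integrates exactly as in the paper, and the resulting bound $F^\pm(t)\leq Ce^{-\sigma_1 t}$ now yields the genuine $L^1$ bound $\int_0^\pi|\bar u_t|\left(\cos\tfrac{\theta}{2}\right)^{2s}(\sin\theta)^a\,d\theta\leq Ce^{-\sigma_1 t}$ that stage two requires. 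With that modification the argument is sound and coincides with the paper's intended proof.
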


\subsection{}
Now we assume $s>1/4$. This implies that $(4+a)s>1$, $4s>1$ and hence $\sigma=1$.
Here we improve Proposition \ref{convergence rate} to
\begin{prop}\label{convergence rate improved}
There exist two constants $a$ and $b$ so that we have the expansion
\[u(r,\theta)=r^s(\cos\frac{\theta}{2})^{2s}+ar^{s-1}(\cos\frac{\theta}{2})^{2s}+o(r^{s-1}),\]
\[v(r,\theta)=r^s(\sin\frac{\theta}{2})^{2s}+br^{s-1}(\sin\frac{\theta}{2})^{2s}+o(r^{s-1}).\]
\end{prop}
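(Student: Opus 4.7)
The plan is to work in the Emden-Fowler variables $(t,\theta)$ already introduced in this section, where the statement for $u$ becomes
\begin{equation*}
\bar u(t,\theta) = \phi_0(\theta) + \alpha e^{-t}\phi_0(\theta) + o(e^{-t})
\end{equation*}
uniformly in $\theta$, with $\phi_0(\theta):=(\cos\tfrac{\theta}{2})^{2s}$; the expansion for $v$ will follow by the reflection $\theta\mapsto\pi-\theta$. The critical observation is that $\phi_0$ is the ground state of $-L_\theta^a$ on $(0,\pi)$ with the mixed boundary conditions $\partial_\theta^a\phi_0(0)=0$ and $\phi_0(\pi)=0$, with eigenvalue $s(1-s)$. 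Consequently, for the linearized equation
\begin{equation*}
\phi_{tt}+\phi_t+s(1-s)\phi+L_\theta^a\phi=0
\end{equation*}
satisfied by $\phi:=\bar u-\phi_0$, the characteristic exponents along the $\phi_0$-direction are exactly $\{0,-1\}$, while along every higher eigenmode $\phi_k$ (eigenvalue $\lambda_k>s(1-s)$) they are strictly less than $-1$; the decaying solution $e^{-t}\phi_0$ along $\phi_0$ is precisely the target correction. Using Propositions \ref{decay estimate}, \ref{gradient decay estimate} and the boundary conditions in \eqref{Emden-Fowler}, the boundary data of $\phi$ satisfy $\phi(t,\pi)=O(e^{-4st})$ and $\partial_\theta^a\phi(t,0)=O(e^{-(4+a)st})$; since $s>\tfrac14$, both are of size $O(e^{-(1+\varepsilon)t})$ for some $\varepsilon>0$.

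To extract the leading term, I would first project $\phi$ onto $\phi_0$ by setting
\begin{equation*}
f(t):=\int_0^\pi \phi(t,\theta)\phi_0(\theta)(\sin\theta)^a\,d\theta.
\end{equation*}
Multiplying the linearized equation by $\phi_0(\sin\theta)^a$ and integrating by parts twice, the two interior eigenvalue terms cancel (since $L_\theta^a\phi_0=-s(1-s)\phi_0$) and the surviving boundary contributions yield
\begin{equation*}
f''(t)+f'(t) = \partial_\theta^a\phi(t,0)+\partial_\theta^a\phi_0(\pi)\,\phi(t,\pi) = O(e^{-(1+\varepsilon)t}).
\end{equation*}
Setting $g(t):=e^tf'(t)$ gives $g'(t)=O(e^{-\varepsilon t})$, so $g(t)\to A$ as $t\to+\infty$, and $f'(t)=Ae^{-t}+O(e^{-(1+\varepsilon)t})$. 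Since $f(t)\to0$ by Proposition \ref{convergence rate}, integrating from $t$ to $+\infty$ yields $f(t)=Ae^{-t}+o(e^{-t})$.

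To control the part orthogonal to $\phi_0$, write $\phi = \tfrac{f(t)}{\|\phi_0\|_a^2}\phi_0 + \psi$ where $\|\phi_0\|_a^2:=\int_0^\pi\phi_0^2(\sin\theta)^a\,d\theta$ and $\psi(t,\cdot)\perp\phi_0$ in the weighted $L^2$-inner product. Then $\psi$ satisfies the same linearized equation with an $O(e^{-(1+\varepsilon)t})\phi_0$ source and boundary data of the same order. Simplicity of the ground state yields a spectral gap $\delta:=\lambda_1-s(1-s)>0$; applying the associated Poincar\'e inequality $\int|\partial_\theta\psi|^2(\sin\theta)^a\,d\theta\geq\lambda_1\int\psi^2(\sin\theta)^a\,d\theta$ to an energy estimate for $E(t):=\int_0^\pi\psi^2(\sin\theta)^a\,d\theta$, one obtains a differential inequality of the form
\begin{equation*}
E''(t)+E'(t)-2\delta E(t)\geq -Ce^{-(2+\varepsilon)t}.
\end{equation*}
The decaying characteristic root is $\mu_-=(-1-\sqrt{1+8\delta})/2<-1$, so combined with $E(t)\to0$ the comparison principle forces $E(t)=O(e^{-(2+\varepsilon')t})$ for some $\varepsilon'>0$; a bootstrap using the boundary H\"older regularity for operators of type $L_a$ (analogous to the Moser iteration in the previous section) upgrades this to the pointwise bound $\psi(t,\theta)=o(e^{-t})$ uniformly in $\theta$. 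Combining the two steps gives $\bar u(t,\theta)=\phi_0(\theta)[1+\alpha e^{-t}]+o(e^{-t})$ with $\alpha:=A/\|\phi_0\|_a^2$, which on substituting $r=e^t$ is the claimed expansion for $u$.

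The main obstacle I expect is the third step: while the simplicity of the ground state and the gap $\lambda_1>s(1-s)$ are essentially classical for this Sturm-Liouville problem with weight $(\sin\theta)^a$, lifting the $L^2$ decay of $\psi$ to a uniform pointwise bound requires combining energy estimates with the boundary H\"older theory for degenerate elliptic operators of type $L_a$. The hypothesis $s>\tfrac14$ enters precisely here, through the requirement that the boundary data of $\phi$ decay strictly faster than $e^{-t}$ so that the $o(e^{-t})$ correction is not overwhelmed by the forcing.
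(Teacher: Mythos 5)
Your proposal takes a genuinely different route from the paper's proof. The paper rescales, introducing $\tilde u=e^t(\bar u-\phi_0)$, shows $\tilde u,\tilde u_t$ are uniformly bounded, obtains uniform $C^\beta$ bounds via the main result of \cite{TVZ 2}, passes to subsequential limits $\tilde u^\infty$ along $t_i\to+\infty$ (which satisfy the homogeneous linearized problem because $e^t\bar u(t,\pi)$ and $e^t\partial_\theta^a\bar u(t,0)$ both tend to zero for $s>1/4$), kills the higher modes of $\tilde u^\infty$ by bounded-solution classification of the ODEs $c_j''+c_j'+(s(1-s)-\lambda_j)c_j=0$, and finally shows the surviving constant is subsequence-independent by precisely the projection ODE you use in your first step. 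You instead try to close the argument quantitatively, without a compactness limit, by an energy estimate on the orthogonal part $\psi$. The projection step itself is fine and essentially identical to the paper's.

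The gap is in the energy estimate for $\psi$. Integrating $\psi L_\theta^a\psi(\sin\theta)^a$ by parts produces the boundary term $\bigl[\psi\,\partial_\theta^a\psi\bigr]_0^\pi$, and at $\theta=\pi$ this is not of the order you claim. Indeed $\partial_\theta^a\phi_0(\pi)=-2^a s\neq 0$, so
\[
\partial_\theta^a\psi(t,\pi)=\partial_\theta^a\bar u(t,\pi)-\partial_\theta^a\phi_0(\pi)\left(1+\frac{f(t)}{\|\phi_0\|_a^2}\right)=2^a s+O(e^{-ast})+O(e^{-t}),
\]
which for $s<1/2$ (i.e.\ $a>0$) is $O(1)$ and does not decay. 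Since $\psi(t,\pi)=\bar u(t,\pi)=O(e^{-4st})$, the boundary contribution is $O(e^{-4st})$, and $4s<2$ when $s<1/2$. So for $s\in(1/4,1/2]$ the differential inequality you obtain is $E''+E'-2\delta E\geq -Ce^{-4st}$ with $4s<2$, and the comparison argument only gives $E(t)=O(e^{-4st})$, hence $\|\psi(t,\cdot)\|_{L^2_a}=O(e^{-2st})$ with $2s<1$ — not $o(e^{-t})$. The claimed forcing $O(e^{-(2+\varepsilon)t})$ in your displayed inequality is therefore wrong for half the allowed range of $s$, and the proof does not close there. (For $s>1/2$ the exponent $(4+a)s>2$, so your estimate is salvageable in that subrange.) The paper's compactness approach sidesteps this issue entirely: the limit equation has exactly homogeneous boundary data whenever $4s-1>0$ and $(4+a)s-1>0$, which holds for all $s>1/4$, so the higher modes are killed by classification rather than by a quantitative gap estimate sensitive to the boundary flux of $\phi_0$. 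To repair your argument you would need either to subtract off an explicit function absorbing the nonvanishing flux $\partial_\theta^a\phi_0(\pi)$ before doing the energy estimate, or to replace the energy estimate by a compactness argument as the paper does.
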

\begin{proof}
Let
\[\tilde{u}(t,\theta):=e^{t}\left[\bar{u}(t,\theta)-\left(\cos\frac{\theta}{2}\right)^{2s}\right],\]
and $\tilde{v}$ be defined similarly.

By Proposition \ref{convergence rate}, $\tilde{u}$ is bounded on $[1,+\infty)\times[0,\pi]$. Moreover,
\[\tilde{u}_t(t,\theta)=e^{t}\left[\bar{u}(t,\theta)-\left(\cos\frac{\theta}{2}\right)^{2s}\right]+e^t\bar{u}_t(t,\theta),\]
is also uniformly bounded, thanks to the estimate in Lemma \ref{lem bound on ut}.

$\tilde{u}$ satisfies
\begin{equation}\label{Emden-Fowler 5}
\left\{
\begin{aligned}
 &\tilde{u}_{tt}-\tilde{u}_t+s\left(1-s\right)\tilde{u}+L_\theta^a
 \tilde{u}=0, \ \mbox{in}\ [1,+\infty)\times(0,\pi), \\
  &0\leq\partial_\theta^a\tilde{u}(t,0)\leq Me^{-[(4+a)s-1]t},\\
  &0\leq\tilde{u}(t,\pi)\leq Me^{-(4s-1)t}.
                          \end{aligned} \right.
\end{equation}

The exact boundary conditions are as follows:
\begin{equation}\label{boundary condition 3}
\left\{
\begin{aligned}
  &\partial_\theta^a\tilde{u}(t,0)=e^{[(4-a)s-1]t}\tilde{v}^2\left(1+e^{-t}\tilde{u}\right),\\
  &\partial^a_\theta\tilde{u}(t,\pi)=-2^ase^{t}-e^{(4-a)st}\tilde{u}\left(1+e^{- t}\tilde{v}\right)^2.
                          \end{aligned} \right.
\end{equation}
Note that although some terms (or coefficients before $\tilde{u}$) in these boundary conditions are not bounded when $t\to+\infty$, they all have a favorable sign. This allows us to apply the main result in \cite{TVZ 2} to deduce that $\tilde{u}$ and $\tilde{v}$ are bounded in $C^\beta([1,+\infty)\times[0,\pi])$ for some $\beta>0$.

Thus for any $t_i\to +\infty$, we can assume that $\tilde{u}(t_i+t,\theta)$ converges to a limit function $\tilde{u}^\infty$ in $C_{loc}(\R\times[0,\pi])$. Then $\tilde{u}^\infty$ satisfies
\begin{equation}\label{9.3}
\left\{
\begin{aligned}
 &\tilde{u}^\infty_{tt}+\tilde{u}^\infty_t+s(1-s)\tilde{u}^\infty+L_\theta^a\tilde{u}^\infty=0, \ \mbox{in}\ \R\times(0,\pi), \\
  &\big|\tilde{u}^\infty(t,\theta)\big|\leq C,\\
  &\tilde{u}^\infty(t,\pi)=0, \\
  &\partial^a_\theta\tilde{u}^\infty(t,0)=0.
                          \end{aligned} \right.
\end{equation}

Consider the eigenvalue problem
\begin{equation*}
\left\{
\begin{aligned}
 &-L_\theta^a\psi_j=\lambda_j\psi_j, \ \mbox{in}\ (0,\pi), \\
   &\psi_j(\pi)=0, \\
  &\partial^a_\theta\psi_j(0)=0.
  \end{aligned} \right.
\end{equation*}
This problem has a sequence of eigenvalues $\lambda_1<\lambda_2\leq \cdots\leq \lambda_k\to+\infty$, and the corresponding eigenfunctions are denoted by $\psi_j$, which is normalized in $L^2((0,\pi),(\sin\theta)^ad\theta)$. Here the first eigenvalue $\lambda_1=s(1-s)$ is simple and $\psi_1(\theta)=(\cos\frac{\theta}{2})^{2s}$ (modulo a constant) is positive in $(0,\pi)$.

Consider the decomposition
\[\tilde{u}^\infty(t,\theta)=\sum_{j=1}^\infty c_j(t)\psi_j(\theta).\]
Then $c_j(t)$ satisfies
\[c_j^{\prime\prime}+c_j^\prime+\left[s\left(1-s\right)-\lambda_j\right]c_j=0.\]
Note that $|c_j(t)|\leq C$ for all $t$. Combined with the above equation, we see $c_j\equiv 0$ for all $j\geq 2$, and $c_1(t)\equiv \tilde{a}$ for some constant $\tilde{a}$.

Now we show that this constant $\tilde{a}$ does not depend on the sequence $t_i\to+\infty$.
In the following we denote
\[\delta:=\min\{(4+a)s-1,4s-1\}.\]
Let
\[f(t):=\int_0^\pi \tilde{u}(t,\theta)\left(\cos\frac{\theta}{2}\right)^{2s}\left(\sin\theta\right)^ad\theta.\]
By the bound on $\tilde{u}$ and $\tilde{u}_t$, $f(t)$ and
\[f^\prime(t)=\int_0^\pi \tilde{u}_t(t,\theta)\left(\cos\frac{\theta}{2}\right)^{2s}\left(\sin\theta\right)^ad\theta\]
are bounded on $[1,+\infty)$. Multiplying the equation in \eqref{Emden-Fowler 5} by $\left(\cos\frac{\theta}{2}\right)^{2s}\left(\sin\theta\right)^a$ and integrating by parts leads to
\[f^{\prime\prime}(t)-f^\prime(t)=-\partial_\theta^a\tilde{u}(t,0)-2^as\tilde{u}(t,\pi)=O(e^{-\delta t}).\]
In particular, $f^{\prime\prime}(t)$ is also bounded on $[1,+\infty)$

For any $t_i\to+\infty$, we can assume that $f(t_i+t)$ converges to a limit $f_\infty(t)$ in $C^1_{loc}(\R)$, which satisfies
\[f_\infty^{\prime\prime}(t)-f_\infty^\prime(t)=0.\]
Because $f_\infty$ is bounded on $\R$, it must be a constant. Thus $f_\infty^\prime\equiv 0$.
This implies that $f^\prime(t)\to 0$ as $t\to+\infty$.

Now we also have
\[\left(e^{-t}f^\prime(t)\right)^\prime=O(e^{-(1+\delta)t})\]
Integrating this on $[t,+\infty)$, we obtain
\[\big|f^\prime(t)\big|=O(e^{-\delta t}).\]
Hence there exists a constant $a$ such that
\[\big|f(t)-a\big|=O(e^{-\delta t}).\]

Together with the previous analysis, we see for any $t\to+\infty$,
\[\tilde{u}(t,\theta)\to a\left(\cos\frac{\theta}{2}\right)^{2s},\]
uniformly in $C([0,\pi])$. This gives the expansion of $u$.
\end{proof}
\begin{rmk}
We can also estimate the convergence rate of $\tilde{u}$, which is of order $O(e^{-\delta t})$. Hence in the expansion of $u$,
$o(r^{s-1})$ can be replaced by $O(r^{s-1-\delta})$.
\end{rmk}

\section{Moving plane argument}
\setcounter{equation}{0}

In this section, we prove
\begin{thm}\label{lem uniqueness}
Let $(u_i,v_i)$, $i=1,2$ be two solutions of \eqref{equation extension}. Suppose that they satisfy
\begin{equation}\label{fine asymptotics 1}
u_i(r,\theta)=r^s\left(\cos\frac{\theta}{2}\right)^{2s}+a_ir^{s-1}\left(\cos\frac{\theta}{2}\right)^{2s}+o(r^{s-1}),\ \ \
i=1,2,
\end{equation}
\begin{equation}\label{fine asymptotics 2}
v_i(r,\theta)=r^s\left(\sin\frac{\theta}{2}\right)^{2s}+b_ir^{s-1}\left(\sin\frac{\theta}{2}\right)^{2s}+o(r^{s-1}),\ \ \
i=1,2,
\end{equation}
for four constants $a_i,b_i, i=1,2$. If $a_1+b_1=a_2+b_2$, then
\[u_1(x+t_0,y)\equiv u_2(x,y),\ \ \ \ v_1(x+t_0,y)\equiv v_2(x,y),\]
where $t_0=\frac{1}{s}(a_2-a_1)=\frac{1}{s}(b_1-b_2)$.
\end{thm}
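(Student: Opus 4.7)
The plan is to apply the method of moving planes in the $x$-direction, using the refined asymptotics of Proposition \ref{convergence rate improved} both to identify the correct shift and to drive the sliding.

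The first step is to compute how translation in $x$ acts on the asymptotic expansion. Writing $\phi(\theta) := (\cos\tfrac{\theta}{2})^{2s}$ and $\psi(\theta) := (\sin\tfrac{\theta}{2})^{2s}$, and using $\partial_x = \cos\theta\,\partial_r - (\sin\theta/r)\partial_\theta$ together with the half-angle identities, one checks
$$\partial_x\bigl(r^s\phi(\theta)\bigr) = s\, r^{s-1}\phi(\theta), \qquad \partial_x\bigl(r^s\psi(\theta)\bigr) = -s\, r^{s-1}\psi(\theta).$$
A Taylor expansion in $t$ (with $r' = r + t\cos\theta + O(1/r)$ and $\theta' = \theta - (t/r)\sin\theta + O(1/r^2)$) inserted into \eqref{fine asymptotics 1}--\eqref{fine asymptotics 2} then yields
$$u_1(x+t,y) = r^s\phi(\theta) + (st+a_1)\,r^{s-1}\phi(\theta) + o(r^{s-1}),$$
$$v_1(x+t,y) = r^s\psi(\theta) + (-st+b_1)\,r^{s-1}\psi(\theta) + o(r^{s-1}).$$
Choosing $t = t_0 := (a_2-a_1)/s$, the hypothesis $a_1+b_1=a_2+b_2$ forces also $t_0 = (b_1-b_2)/s$, so that both $r^{s-1}$ corrections cancel simultaneously, giving $u_1(x+t_0,y) - u_2(x,y) = o(r^{s-1})$ and $v_2(x,y) - v_1(x+t_0,y) = o(r^{s-1})$. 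The hypothesis $s>1/4$ enters here through Proposition \ref{convergence rate improved}.

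For the slide itself, I set $u^\lambda(x,y) := u_1(x+t_0+\lambda,y)$, $v^\lambda(x,y) := v_1(x+t_0+\lambda,y)$, $W_1^\lambda := u^\lambda - u_2$, and $W_2^\lambda := v_2 - v^\lambda$. Algebraically rewriting $u^\lambda(v^\lambda)^2 - u_2 v_2^2$ and $v_2 u_2^2 - v^\lambda(u^\lambda)^2$, one obtains the coupled system
$$\partial_y^a W_1^\lambda = (v^\lambda)^2 W_1^\lambda - u_2\bigl(v^\lambda+v_2\bigr) W_2^\lambda, \quad \partial_y^a W_2^\lambda = u_2^2\, W_2^\lambda - v^\lambda\bigl(u^\lambda+u_2\bigr) W_1^\lambda,$$
with $L_a W_i^\lambda = 0$ in $\R^2_+$. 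Crucially the off-diagonal couplings are nonpositive, giving a competitive-type structure well adapted to maximum-principle arguments. For $\lambda$ large, Step 1 combined with the decay estimate (Proposition \ref{decay estimate}) and Lemma \ref{lem 7.1} yields $W_i^\lambda > 0$ on $\overline{\R^2_+}$. I then define $\lambda^* := \inf\{\lambda\ge 0 : W_i^{\lambda'}\ge 0 \text{ on } \overline{\R^2_+} \text{ for all } \lambda'\ge\lambda, \ i=1,2\}$ and aim to show $\lambda^* = 0$. Arguing by contradiction, if $\lambda^* > 0$, continuity gives $W_i^{\lambda^*}\ge 0$; the strong maximum principle for $L_a$-harmonic functions together with a Hopf-type lemma at $\partial^0\R^2_+$, tailored to the competitive boundary system, force each $W_i^{\lambda^*}$ to be either identically zero or strictly positive. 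If one vanishes identically, the boundary coupling drags the other to zero and matching the Step 1 translation asymptotics yields $\lambda^*=0$; if both are strictly positive, the quantitative leading term $s\lambda^* r^{s-1}\phi(\theta)$ (respectively $\psi(\theta)$) from Step 1 provides positivity at infinity with a positive margin, allowing one to decrease $\lambda^*$ slightly while preserving positivity. Either way a contradiction, so $\lambda^* = 0$. Swapping the roles of $(u_1,v_1)$ and $(u_2,v_2)$ (which sends $t_0 \mapsto -t_0$) yields the reverse inequality, hence equality.

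The main obstacle is controlling $W_i^\lambda$ uniformly as $r \to \infty$ in the degenerate sectors, where $\phi$ vanishes at $\theta = \pi$ and $\psi$ at $\theta = 0$. The leading-order positivity $s\lambda r^{s-1}\phi$ alone does not preclude sign changes there, so one has to propagate positivity across the interface by combining the nonpositive off-diagonal couplings with the sharp polynomial decay in the unfavored half-space (Proposition \ref{decay estimate}). The entire scheme rests on the $o(r^{s-1})$ expansion from Proposition \ref{convergence rate improved}, which is precisely why the hypothesis $s > 1/4$ is imposed in Theorem \ref{main result}.
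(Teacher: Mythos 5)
Your proposal follows essentially the paper's own argument: moving planes in the $x$-direction, using the refined $O(r^{s-1})$ expansion from Proposition \ref{convergence rate improved} both to start the slide from $t$ large and to show that the slide must stop exactly at $t_0=(a_2-a_1)/s$, with the strong maximum principle and a reverse slide giving equality. The only cosmetic difference is that you package the boundary conditions as a coupled competitive system for $(W_1^\lambda,W_2^\lambda)$, whereas the paper handles the two inequalities sequentially — first pinning down $u^t\ge u_2$ on one half-line and $v^t\le v_2$ on the other directly from the asymptotics, then using each as a favorable-sign coefficient in a one-sided Robin-type comparison for the other.
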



Note that \eqref{fine asymptotics 1} and \eqref{fine asymptotics 2} imply that
\begin{equation}\label{fine asymptotics 3}
\big|u_i(r,\theta)-r^s\left(\cos\frac{\theta}{2}\right)^{2s}\big|
+\big|v_i(r,\theta)-r^s\left(\sin\frac{\theta}{2}\right)^{2s}\big|\leq Mr^{s-1},\ \ \
i=1,2,
\end{equation}
for some constant $M>0$.

For any $t\in\R$, let
\[u^t(x,y):=u_1(x+t,y),\ \ \ \ v^t(x,y):=v_1(x+t,y),\]
which is still a solution of \eqref{equation extension}.

In the following, it will be helpful to keep the following fact in mind. Because
\begin{eqnarray*}
\big|u^t-u_2\big|&\leq& M\left(1+|z|\right)^{s-1}+M\left(1+|z+t|\right)^{s-1}\\
&&  +\Big|\left(\frac{\sqrt{x^2+y^2}+x}{2}\right)^{2s}-\left(\frac{\sqrt{(x+t)^2+y^2}+x+t}{2}\right)^{2s}\Big|,
\end{eqnarray*}
$\big|u^t-u_2\big|\to0$ as $z\to\infty$. Thus any positive maximum (or negative minima) of $u^t-u_2$ is attained at some point.

The first step is to show that we can start the moving plane from the infinity.
\begin{lem}\label{lem 8.1}
 If $t$ is large enough,
\begin{equation}\label{8.1}
u^t(x,y)\geq u_2(x,y),\ \ \ \ v^t(x,y)\geq v_2(x,y),\ \ \ \mbox{on}\ \overline{\R_+^2}.
\end{equation}
\end{lem}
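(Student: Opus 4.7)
The plan is to combine the refined asymptotic expansion of Proposition \ref{convergence rate improved} in the far field with a direct uniform bound on a fixed compact core, and to glue the two regimes by choosing the parameters appropriately.

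In a bounded core $B_{R}^{+}$: for fixed $R>0$ and $t\to+\infty$, every $z\in B_{R}^{+}$ satisfies $|z+(t,0)|\ge t-R\to\infty$ with polar angle from the origin tending to $0$ uniformly. Hence by the far-field asymptotic for $u_{1}$, one has $u^{t}(z)=u_{1}(z+(t,0))\sim t^{s}\to+\infty$ uniformly in $B_{R}^{+}$, so $u^{t}\ge u_{2}$ on $B_{R}^{+}$ for $t$ large (using the bound on $u_{2}$ from Proposition \ref{upper bound}). The companion statement for $v^{t}$ on $B_{R}^{+}$ is obtained from the vanishing of $\sin^{2s}(\theta'/2)$ along the positive $x$-direction together with the sharp boundary decay in Proposition \ref{decay estimate}.

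In the exterior $\{|z|\ge R\}$, I would apply Proposition \ref{convergence rate improved} and Taylor-expand the shifted polar coordinates $(r',\theta')$ of $(x+t,y)$ in terms of $(r,\theta)$. Using $r'=r+t\cos\theta+O(t^{2}/r)$ and $(r'+x+t)/2=(r+x)/2+t\cos^{2}(\theta/2)+O(t^{2}/r)$ as $r\to\infty$, one obtains the uniform expansion
\[
u^{t}(r,\theta)-u_{2}(r,\theta)=\bigl(st+a_{1}-a_{2}\bigr)r^{s-1}\cos^{2s}(\theta/2)+o(r^{s-1}),
\]
and the analogous expansion for $v^{t}-v_{2}$ (with $\cos$ replaced by $\sin$, $a_{i}$ by $b_{i}$, and an opposite sign reflecting the monotonicity of $h(x,y):=(\sqrt{x^{2}+y^{2}}-x)/2$ in $x$). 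For $t$ chosen large enough, the leading coefficient has the required sign, and the inequality follows in $\{|z|\ge R\}$ by absorbing the $o(r^{s-1})$ remainder.

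The main obstacle is the degenerate corridor $\theta\to\pi$ for the $u$-inequality (and $\theta\to 0$ for the $v$-inequality), where the trigonometric factor $\cos^{2s}(\theta/2)$ vanishes and the asymptotic above is inconclusive. To close this gap I would use the pointwise monotonicity of $g(x,y):=(\sqrt{x^{2}+y^{2}}+x)/2$ in $x$: this gives $g(x+t,y)\ge g(x,y)$ and hence $g(x+t,y)^{s}\ge g(x,y)^{s}$, controlling the leading term of $u^{t}-u_{2}$ in \emph{every} direction, while the correction term is absorbed in a thin wedge near $\theta=\pi$ using the boundary decay of Propositions \ref{decay estimate} and \ref{gradient decay estimate}. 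The symmetric analysis with $h(x,y)$ handles the $v$-inequality near $\theta=0$. Gluing the three regions by first fixing $R$ then choosing $t$ large completes the proof.
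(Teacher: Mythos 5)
Your plan is to prove the two pointwise inequalities on \emph{all} of $\overline{\R^2_+}$ directly from the refined asymptotics of Proposition~\ref{convergence rate improved}, splitting into a compact core and a far field. This is \emph{not} the paper's approach, and I don't believe it can be made to work in the form stated, because of the degenerate corridor you yourself flag.

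The far-field expansion
\[
u^{t}(r,\theta)-u_{2}(r,\theta)=\bigl(st+a_{1}-a_{2}\bigr)r^{s-1}\cos^{2s}(\theta/2)+o(r^{s-1})
\]
has an error term that is \emph{uniform in $\theta$ but not proportional to $\cos^{2s}(\theta/2)$}. (This is visible in the proof of Proposition~\ref{convergence rate improved}: the statement is $\tilde u(t,\theta)\to a\bigl(\cos\frac{\theta}{2}\bigr)^{2s}$ uniformly, so the remainder in $\bar u$ is $o(e^{-t})$ uniformly, without a $\cos^{2s}$ factor.) As $\theta\to\pi$ the leading term degenerates like $\cos^{2s}(\theta/2)\to 0$ while the remainder does not, so the expansion cannot determine the sign of $u^{t}-u_{2}$ near the ray $\theta=\pi$ (and symmetrically, the sign of $v^{t}-v_{2}$ near $\theta=0$). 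Your proposed fix does not close this gap. The monotonicity of $g(x,y)=\bigl(\sqrt{x^2+y^2}+x\bigr)/2$ in $x$ controls only the difference of the two leading terms $g(x+t,y)^s-g(x,y)^s\ge 0$, leaving the two $o(r^{s-1})$ remainders uncontrolled; and Propositions~\ref{decay estimate}, \ref{gradient decay estimate} give one-sided \emph{upper} bounds of the form $u_i(x,0)\le C|x|^{-3s}$ for $x<0$. An upper bound on two small positive quantities that decay at the same rate cannot produce a pointwise comparison between them: nothing you have rules out $u^{t}(x,0)<u_{2}(x,0)$ for some large negative $x$ at this stage of the argument. A second, more minor issue is the gluing: the Taylor expansion $r'=r+t\cos\theta+O(t^2/r)$ is only accurate for $r\gg t$, whereas your compact core argument covers a fixed ball $B_R^+$, leaving an intermediate annulus $R\lesssim |z|\lesssim t$ to handle separately.

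The paper avoids the corridor difficulty entirely by \emph{not} proving the boundary inequalities on all of $\partial\R^2_+$. It establishes, from the asymptotics and Lemma~\ref{lem 7.2}, only the one-sided boundary comparisons $u^{t}(x,0)\ge u_{2}(x,0)$ for $x\ge -C(M)$ and $v^{t}(x,0)\le v_{2}(x,0)$ for $x\le -C(M)$, and then propagates them to all of $\overline{\R^2_+}$ by the maximum principle: $w:=u^{t}-u_{2}$ is $L_a$-harmonic, tends to $0$ at infinity (so a negative minimum is attained), cannot have an interior negative minimum by the strong maximum principle, is nonnegative on $\{x\ge -C(M)\}\cap\partial\R^2_+$, and on $\{x<-C(M)\}\cap\partial\R^2_+$ satisfies the Robin-type inequality $\partial_y^a w\le v_2^2\,w$ (using $v^t\le v_2$ there), whose sign is incompatible with a negative boundary minimum via the Hopf lemma. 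Then the same scheme, now using $u^t\ge u_2$ globally, gives $v^t\le v_2$. This global comparison-principle step is precisely the ingredient your proposal lacks, and it is what resolves the indeterminacy in the degenerate corridor. (Note also that the inequality for $v$ in the display \eqref{8.1} should read $v^t\le v_2$, consistent with the body of the proof and with your own sign computation.)
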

\begin{proof}
If $t$ is sufficiently large, for $x\geq 0$,
\begin{eqnarray*}
u^t(x,0)&\geq& \left(x+t\right)^s-M\left(x+t\right)^{s-1}\\
&\geq& x^s+Mx^{s-1}\\
&\geq& u_2(x,0).
\end{eqnarray*}
Similarly,
\[v^t(x,0)\leq v_2(x,0), \ \ \ \mbox{on}\ (-\infty,-C(M)].\]

It can be checked directly that for $t$ large,
\[u^t(x,0)\geq u_2(x,0),\ \ \ \ \mbox{on}\ [-C(M),0].\]
In fact, for $x\in [-C(M),0]$, $\lim_{t\to+\infty}u^t(x,0)=+\infty$
uniformly (see Lemma \ref{lem 7.2}).

Then by noting that
\begin{equation*}
\left\{
\begin{aligned}
 &L_a\left(u^t-u_2\right)=0, \ \mbox{in}\ \R^2_+, \\
  &\big|u^t(z)-u_2(z)\big|\to 0, \ \mbox{as}\ z\in\R^2_+,\ z\to\infty, \\
   &u^t(x,0)-u_2(x,0)\geq0, \ \mbox{on}\ [-C(M),+\infty) \\
 &\partial^a_y\left(u^t(x,0)-u_2(x,0)\right)\leq
 v_2(x,0)^2\left(u^t(x,0)-u_2(x,0)\right) \ \mbox{on}\
 (-\infty,-C(M)),
                          \end{aligned} \right.
\end{equation*}
we can apply the maximum principle to deduce that
\[u^t\geq u, \ \mbox{in}\ \R^2_+.\]

The same reasoning using
\begin{equation*}
\left\{
\begin{aligned}
 &L_a\left(v^t-v_2\right)=0, \ \mbox{in}\ \R^2_+, \\
  &\big|v^t(z)-v_2(z)\big|\to 0, \ \mbox{as}\ z\in\R^2_+,\ z\to\infty, \\
   &\partial^a_y\left(v^t(x,0)-v_2(x,0)\right)\geq
 u^t(x,0)^2\left(v^t(x,0)-v_2(x,0)\right) \ \mbox{on}\
 \partial\R^2_+,
                          \end{aligned} \right.
\end{equation*}
gives
\[v^t\leq v, \ \mbox{in}\ \R^2_+.\qedhere\]
\end{proof}

Now we can define $t_0$ to be
\begin{equation}\label{8.2}
\min\{t: \ \forall s>t, u^s(x,y)\geq u_2(x,y),\ \ \ \ v^s(x,y)\geq v_2(x,y),\ \ \ \mbox{on}\ \overline{\R_+^2}\}.
\end{equation}
By continuity, $u^{t_0}\geq u_2$, $v^{t_0}\leq v_2$.

We want to prove that $t_0=\frac{1}{s}(a_2-a_1)$. Indeed, if this is true, we have
$u^{t_0}\geq u_2$ and $v^{t_0}\leq v_2$. Then we can slide from the
left, by the same reasoning this procedure must stop at $t_0$. Thus we also have $u^{t_0}\leq u_2$ and $v^{t_0}\geq v_2$. Consequently $u^{t_0}\equiv u_2$
and $v^{t_0}\equiv v_2$.

Now assume $t_0>\frac{1}{s}(a_2-a_1)$. We will get a contradiction from this.
Let $\delta_0=st_0-(a_2-a_1)>0$. By \eqref{fine asymptotics 1},
\[u^{t_0}(x,0)=x^s+\left(a_1+st_0\right)x^{s-1}+o(x^{s-1}),
\ \ \ \mbox{as}\ x\to+\infty.\]
\[v^{t_0}(x,0)=|x|^s+\left(b_1-st_0\right)|x|^{s-1}+o(|x|^{s-1}),
\ \ \ \mbox{as}\ x\to-\infty.\]
Comparing with $u_2$ and $v_2$ respectively, we get a constant $T_0$ such that
\begin{equation}\label{8.4}
u^{t_0}(x,0)\geq u_2(x,0)+\frac{\delta_0}{2}x^{s-1},\ \ \ \mbox{if}\ x\geq T_0,
\end{equation}
and
\begin{equation}\label{8.5}
v^{t_0}(x,0)\leq v_2(x,0)-\frac{\delta_0}{2}|x|^{s-1},\ \ \ \mbox{if}\
x\leq -T_0.
\end{equation}
By \eqref{fine asymptotics 1}, perhaps after choosing a larger $T_0$, for all $t\geq 0$ we have
\begin{equation*}
\big|u^{t}(x,0)-x^s-\left(a_1+st\right)x^{s-1}\big|\leq
\frac{\delta_0}{8}x^{s-1},\ \ \ \mbox{if}\ x\geq T_0.
\end{equation*}
Thus there exists an $\varepsilon_1>0$ such that, for all
$t\in[t_0-\varepsilon_1,t_0]$,
\[
u^{t}(x,0)\geq u^{t_0}(x,0)-\frac{\delta_0}{4}x^{s-1},\ \ \ \mbox{if}\ x\geq T_0.
\]
Combining this with \eqref{8.4}, we see for these $t$,
\begin{equation}\label{11.1}
u^{t}(x,0)\geq u_2(x,0),\ \ \ \mbox{if}\ x\geq T_0,
\end{equation}
and similarly
\begin{equation}\label{11.2}
v^{t}(x,0)\leq v_2(x,0),\ \ \ \mbox{if}\ x\leq -T_0,
\end{equation}

 By the strong
maximum principle,  $u^{t_0}>u_2$ and $v^{t_0}<v_2$ strictly. In fact, if there exists a point $z_0\in\R^2_+$ such that $u^{t_0}(z_0)=u_2(z_0)$, then the strong maximum principle implies that $u^{t_0}\equiv u_2$, which contradicts \eqref{8.4}.

Next, by continuity we can find an $\varepsilon_2>0$ so that for all $t\in[t_0-\varepsilon_2,t_0]$,
\[u^{t}(x,0)\geq u_2(x,0),\ \ \ v^{t}(x,0)\leq v_2(x,0),\ \ \ \mbox{for}\ x\in[-T_0,T_0].\]

Combined with \eqref{11.1} and \eqref{11.2}, by choosing $\varepsilon:=\min\{\varepsilon_1,\varepsilon_2\}$, we see for all $t\in[t_0-\varepsilon,t_0]$,
\[u^{t}(x,0)-u_2(x,0)\geq0,\ \ \ \mbox{in}\ [-T_0,+\infty),\]
\[v^{t}(x,0)-v_2(x,0)\leq0,\ \ \ \mbox{in}\ (-\infty,T_0].\]
Then arguing as in the proof of Lemma \ref{lem 8.1}, we know
for all $t\in[t_0-\varepsilon,t_0]$,
\[u^{t}\geq u_2,\ \ v^{t}\leq v_2,\ \ \ \mbox{in}\ \overline{\R^2}.\]
However, this contradicts the definition of $t_0$. Thus the assumption $t_0>\frac{1}{s}(a_2-a_1)$ cannot be true.

\begin{proof}[Proof of Theorem \ref{main result}]
We first prove the symmetry between $u$ and $v$. Given a solution $(u,v)$ of \eqref{equation extension}, let
$(u_1(x,y),v_1(x,y))=(u(x,y),v(x,y))$ and $(u_2(x,y),v_2(x,y))=(v(-x,y),u(-x,y))$.

By Proposition \ref{convergence rate improved}, after a scaling, we have the expansion
\begin{equation}\label{12.1}
\left\{
\begin{aligned}
 &u_1(r,\theta)=u(r,\theta)=r^s(\cos\frac{\theta}{2})^{2s}+ar^{s-1}(\cos\frac{\theta}{2})^{2s}+o(r^{s-1}), \\
  &v_1(r,\theta)=v(r,\theta)=r^s(\sin\frac{\theta}{2})^{2s}+br^{s-1}(\sin\frac{\theta}{2})^{2s}+o(r^{s-1}).
                          \end{aligned} \right.
\end{equation}
Hence
\begin{equation*}
\left\{
\begin{aligned}
 &u_2(r,\theta)=r^s(\cos\frac{\theta}{2})^{2s}+br^{s-1}(\cos\frac{\theta}{2})^{2s}+o(r^{s-1}), \\
  &v_2(r,\theta)=r^s(\sin\frac{\theta}{2})^{2s}+ar^{s-1}(\sin\frac{\theta}{2})^{2s}+o(r^{s-1}).
                          \end{aligned} \right.
\end{equation*}

Thus we can apply Theorem \ref{lem uniqueness} to get a constant $T$ such that
\[u(x+2T,y)=v(-x,y),\ \ v(x+2T,y)=u(-x,y).\]
That is, $u$ and $v$ are symmetric with respect to the line $\{x=T\}$.

This symmetry implies that in the expansion \eqref{12.1}, $a=-b$. Now for any two solutions $(u_i,v_i)$ of \eqref{equation extension}, after a scaling, they have the expansions as in \eqref{fine asymptotics 1} and \eqref{fine asymptotics 2}, with
$a_1+b_1=a_2+b_2=0$. Thus by Theorem \ref{lem uniqueness}, $(u_1,v_1)$ and $(u_2,v_2)$ only differs by a translation in the $x$-direction. This completes the proof of Theorem \ref{main result}.
\end{proof}

\appendix
\section{Basic Facts about $s$-Lapalcian}
\setcounter{equation}{0}

We first present a mean value inequality for $L_a$-subharmonic function.
\begin{lem}\label{lem mean value inequality}
Let $u$ be a $L_a$-subharmonic function in $B_r\subset\R^{n+1}$ (centered at the origin), then
\[u(0)\leq C(n,a)r^{-n-1-a}\int_{B_r}y^au.\]
Here $C(n,a)$ is a constant depending only on $n$ and $a$.
\end{lem}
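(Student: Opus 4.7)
The plan is to deduce this mean value inequality from a monotonicity of the weighted spherical average, in the spirit of the classical sub-mean property for subharmonic functions. Define
\[\phi(\rho) := \frac{1}{\rho^{n+a}}\int_{\partial B_\rho}|y|^a u\, d\sigma.\]
By the substitution $z=\rho\omega$ with $\omega\in\partial B_1$, we have
\[\phi(\rho)=\int_{\partial B_1}|\omega_{n+1}|^a u(\rho\omega)\,d\sigma(\omega),\]
so differentiating under the integral sign and transforming back gives
\[\phi'(\rho)=\frac{1}{\rho^{n+a}}\int_{\partial B_\rho}|y|^a\frac{\partial u}{\partial r}\,d\sigma.\]

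The key step is to apply the divergence theorem for the weighted operator: since the outward conormal derivative for $L_a$ on $\partial B_\rho$ is exactly $|y|^a\partial_r$,
\[\int_{\partial B_\rho}|y|^a\frac{\partial u}{\partial r}\,d\sigma=\int_{B_\rho}\mathrm{div}\bigl(|y|^a\nabla u\bigr)\,dz=\int_{B_\rho}L_a u\geq 0,\]
where the last inequality uses the hypothesis that $u$ is $L_a$-subharmonic. Hence $\phi$ is non-decreasing on $(0,r]$.

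Letting $\rho\to 0^+$ and using continuity of $u$ at the origin (which, if not directly assumed, follows from the degenerate elliptic regularity theory of Fabes--Kenig--Serapioni for operators with $A_2$ weights), we obtain
\[\lim_{\rho\to 0^+}\phi(\rho)=c_{n,a}\,u(0),\qquad c_{n,a}:=\int_{\partial B_1}|\omega_{n+1}|^a\,d\sigma.\]
Therefore $c_{n,a}\,u(0)\leq \phi(\rho)$ for every $\rho\in(0,r]$. Multiplying by $\rho^{n+a}$ and integrating from $0$ to $r$, the coarea formula (with the weight $|y|^a$) yields
\[c_{n,a}\,u(0)\,\frac{r^{n+1+a}}{n+1+a}\leq \int_0^r\int_{\partial B_\rho}|y|^a u\,d\sigma\,d\rho=\int_{B_r}|y|^a u\,dz,\]
which gives the claimed inequality with $C(n,a)=(n+1+a)/c_{n,a}$; note that $a\in(-1,1)$ guarantees $n+1+a>0$ and $c_{n,a}<\infty$.

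The main obstacle is justifying the limit $\phi(\rho)\to c_{n,a}u(0)$, since we only a priori have a distributional inequality $L_a u\geq 0$. In the applications of this paper the function $u$ is always known to be continuous (indeed H\"older continuous) up to the hyperplane $\{y=0\}$ by Caffarelli--Silvestre type regularity, so no additional work is needed; for general $L_a$-subharmonic distributions one would first convolve with a standard mollifier in the $x$-variable (which preserves $L_a$-subharmonicity), apply the argument above to the regularization, and then pass to the limit using the fact that the monotonicity of $\phi$ is preserved and that $u$ is locally integrable with respect to $|y|^a dz$.
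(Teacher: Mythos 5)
Your proof is correct and follows essentially the same route as the paper's: establish monotonicity of the weighted spherical average $\rho^{-n-a}\int_{\partial B_\rho}|y|^a u\,d\sigma$ by differentiating and applying the divergence theorem for $L_a$, then deduce the volume mean value inequality. You also fill in the (sub)limit $\phi(\rho)\to c_{n,a}u(0)$ as $\rho\to 0^+$, which the paper's one-line proof leaves implicit.
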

\begin{proof}
Direct calculation gives
\begin{eqnarray*}
\frac{d}{dr}\left(r^{-n-a}\int_{\partial B_r}y^au\right)&=&r^{-n-a}\int_{\partial B_r}y^a\frac{\partial u}{\partial r}\\
&=&r^{-n-a}\int_{B_r}\mbox{div}\left(y^a\nabla u\right)\\
&\geq&0.
\end{eqnarray*}
Thus $r^{-n-a}\int_{\partial B_r}y^au$ is non-decreasing in $r$. Integrating this in $r$ shows that
$r^{-n-1-a}\int_{B_r}y^au$ is also non-decreasing in $r$.
\end{proof}

By standard Moser's iteration we also have the following super bound
\begin{lem}\label{lem sup bound}
Let $u$ be a $L_a$-subharmonic function in $B_r\subset\R^{n+1}$ (centered at the origin), then
\[\sup_{B_{r/2}}u\leq C(n,a)\left(r^{-n-1-a}\int_{B_r}y^au^2\right)^{\frac{1}{2}}.\]
Here $C(n,a)$ is a constant depending only on $n$ and $a$.
\end{lem}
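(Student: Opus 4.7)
The statement is a weighted $L^\infty$--$L^2$ mean value inequality for solutions of $L_a u \geq 0$, where the weight $|y|^a$ with $a \in (-1,1)$ is a Muckenhoupt $A_2$ weight. The plan is to run the standard Moser iteration scheme adapted to this degenerate elliptic setting, as developed by Fabes--Kenig--Serapioni for $A_2$-weighted divergence-form operators. Throughout, I will read $y^a$ as $|y|^a$ (the natural even-weight convention in the Caffarelli--Silvestre framework) and assume $u$ is nonnegative (otherwise replace $u$ by $u_+$, which is still $L_a$-subharmonic).

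\textbf{Step 1: Caccioppoli inequality.} For any exponent $p \geq 2$ and any cutoff $\eta \in C_c^\infty(B_r)$, test the weak inequality $L_a u \geq 0$ against $u^{p-1}\eta^2$. Integration by parts gives
\[
(p-1)\int_{B_r} |y|^a u^{p-2} \eta^2 |\nabla u|^2 \, dz \;\leq\; 2 \int_{B_r} |y|^a u^{p-1} \eta |\nabla u|\, |\nabla \eta| \, dz.
\]
Absorbing the right-hand side with Cauchy--Schwarz and rewriting in terms of $w := u^{p/2}$ yields
\[
\int_{B_r} |y|^a |\nabla(\eta w)|^2 \, dz \;\leq\; C p \int_{B_r} |y|^a w^2 |\nabla \eta|^2 \, dz.
\]

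\textbf{Step 2: Weighted Sobolev inequality.} Since $a \in (-1,1)$ places $|y|^a$ in $A_2(\R^{n+1})$, the Fabes--Kenig--Serapioni weighted Sobolev inequality applies: there exist $\chi = \chi(n,a) > 1$ and $C = C(n,a)$ such that for any $\phi \in C_c^\infty(B_r)$,
\[
\left( \frac{1}{\mu(B_r)} \int_{B_r} |\phi|^{2\chi} \, d\mu \right)^{1/\chi} \;\leq\; C r^2 \,\frac{1}{\mu(B_r)} \int_{B_r} |\nabla \phi|^2 \, d\mu,
\]
where $d\mu = |y|^a \, dz$ and $\mu(B_r) \sim r^{n+1+a}$. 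Applying this to $\phi = \eta w$ and combining with Step 1 produces the reverse H\"older inequality
\[
\left( \int_{B_{\rho_1}} |y|^a u^{p\chi} \right)^{1/\chi} \;\leq\; \frac{C p \, r^{(n+1+a)(1-1/\chi)}}{(\rho_2 - \rho_1)^2} \int_{B_{\rho_2}} |y|^a u^p, \qquad \tfrac{r}{2} \leq \rho_1 < \rho_2 \leq r.
\]

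\textbf{Step 3: Iteration.} Set $p_k := 2\chi^k$ and $\rho_k := \tfrac{r}{2}(1 + 2^{-k})$. Iterating the reverse H\"older inequality across the shrinking balls, one gets
\[
\|u\|_{L^{p_{k+1}}(B_{\rho_{k+1}}, |y|^a)} \;\leq\; \prod_{j=0}^{k} \left( \frac{C \chi^j 4^{j+1}}{r^2} \right)^{1/p_j} r^{(n+1+a)(1-1/\chi)/p_j} \,\|u\|_{L^{2}(B_r, |y|^a)}.
\]
Because $\sum_j j \chi^{-j}$ and $\sum_j \chi^{-j}$ converge, letting $k \to \infty$ yields
\[
\sup_{B_{r/2}} u \;\leq\; C(n,a)\left( r^{-n-1-a} \int_{B_r} |y|^a u^2 \right)^{1/2},
\]
which is the claim.

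\textbf{Expected obstacle.} All ingredients are standard once the Muckenhoupt $A_2$ framework is in place; the only delicate point is justifying the test function $u^{p-1}\eta^2$ in the weak subharmonicity inequality for arbitrarily large $p$, which one handles by first truncating $u$ at level $N$ (replacing $u^{p-1}$ by $\min(u,N)^{p-1}$), running the argument, and then sending $N \to \infty$ by monotone convergence. Verifying that the Fabes--Kenig--Serapioni Sobolev inequality applies with constants uniform down to $\{y=0\}$ is the one genuinely weighted ingredient, but this is exactly the content of their theory and requires no new work.
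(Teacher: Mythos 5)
Your proposal is correct and is precisely the argument the paper has in mind: the paper offers no written proof beyond the phrase ``by standard Moser's iteration,'' and your Caccioppoli--weighted-Sobolev (Fabes--Kenig--Serapioni, the paper's reference \cite{FKS}) iteration, together with the truncation remark justifying the test functions, is the standard way to carry that out for the $A_2$ weight $|y|^a$.
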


\begin{lem}\label{lem decay}
Let $M>0$ be fixed. Any $v\in H^1(B_1^+)\cap C(\overline{B_1^+})$ nonnegative solution to
\begin{equation*}
\left\{
\begin{aligned}
 &L_a v\geq 0, \ \mbox{in}\ B_1^+, \\
 &\partial^a_y v\geq Mv    \ \ \mbox{on}\ \partial B_1^+,
                          \end{aligned} \right.
\end{equation*}
satisfies
\[\sup_{\partial^0B_{1/2}^+}v\leq\frac{C(n)}{M}\int_{B_1^+}y^av.\]
\end{lem}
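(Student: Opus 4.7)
The plan is to combine a weighted integration-by-parts identity that exploits the Robin condition with the mean-value inequality applied to the even reflection of $v$. For any nonnegative $\phi\in C^\infty(\overline{B_1^+})$ vanishing on $\partial^+B_1^+$, multiplying $L_av\ge 0$ by $\phi$, integrating by parts twice (using $\phi|_{\partial^+B_1^+}=0$), and using $\partial_y^av\ge Mv$ yields the master identity
\[
\int_{\partial^0B_1^+}v\,(M\phi-\partial_y^a\phi)\le \int_{B_1^+}v\,L_a\phi.
\]
Applied with $\phi=\eta^2$ for a smooth radial cutoff $\eta$ with $\eta\equiv1$ on $B_{3/4}$ and supported in $B_1$, one has $\partial_y^a\phi\equiv 0$ on $\partial^0B_{3/4}^+$ and $|L_a\phi|\le C(n,a)\,y^a$, which gives the preliminary boundary $L^1$-estimate
\[
M\int_{\partial^0B_{3/4}^+}v\;\le\; C(n,a)\int_{B_1^+}y^a v.
\]

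Next, since $\partial_y^a v\ge Mv\ge 0$, the even extension $v_e(x,y):=v(x,|y|)$ defines a nonnegative $L_a$-subharmonic function on $B_1\subset\R^{n+1}$; this is checked by a distributional integration-by-parts, where the nonnegative "jump" $2\partial_y^a v\ge 0$ across $\{y=0\}$ provides the needed subharmonicity. Lemma \ref{lem mean value inequality} applied to $v_e$ at any $z_0=(x_0,0)\in\partial^0B_{1/2}^+$ and radius $r\le 1/4$ then yields the pointwise bound
\[
v(z_0)=v_e(z_0)\le C(n,a)\,r^{-n-1-a}\int_{B_r^+(z_0)}y^a v.
\]

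Finally, to extract the missing factor $1/M$, compare $v$ with the explicit $L_a$-harmonic supersolution $\Psi(y):=(1-a)/M+y^{1-a}$, which is $L_a$-harmonic in the $y$-variable and saturates the Robin condition $\partial_y^a\Psi=M\Psi$ at $y=0$, while $\Psi(0)=(1-a)/M$ and $\Psi(r)$ is of order $1$ for $r$ of order one. The maximum principle applied to $v-K\Psi$ — positive interior maxima are forbidden by $L_a$-subharmonicity, and positive maxima on the flat boundary are forbidden because the matched Robin condition gives $\partial_y^a(v-K\Psi)\ge M(v-K\Psi)>0$, contradicting the Hopf-type behavior at a positive boundary maximum — forces $v$ to be dominated on $\partial^0 B_{1/2}^+$ by a scalar multiple of $\Psi(0)\sim 1/M$. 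Combined with the sup bound $\sup_{B_{3/4}^+}v\le C(n,a)\int y^a v$ obtained from Lemma \ref{lem mean value inequality} applied to $v_e$ away from the sphere, this delivers the desired $\sup_{\partial^0 B_{1/2}^+}v\le (C(n)/M)\int y^a v$.

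The main obstacle is the equator $\partial^+ B_r^+\cap\{y=0\}$: because $\Psi(0)$ itself is already of order $1/M$, the naive ratio $K=\sup v/\min_{\partial^+}\Psi$ balloons by the factor $M$ and the direct barrier comparison loses the desired $1/M$ factor. This is overcome by bootstrapping: the preliminary $L^1$-estimate combined with the pointwise mean-value bound applied on dyadically shrinking half-balls centered at $z_0$ provides, at each step, a refined control on $v$ near the equator, which feeds back into the barrier comparison at the next step; a finite number of iterations then suffices to produce the sharp constant $C(n)/M$.
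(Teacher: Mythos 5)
Your building blocks are all individually correct---the weak-form identity giving $M\int_{\partial^0 B_{3/4}^+}v\le C\int_{B_1^+}y^a v$, the observation that the even extension is $L_a$-subharmonic so the mean-value inequality applies, and the identification of $\Psi(y)=(1-a)/M+y^{1-a}$ as an $L_a$-harmonic function that saturates the Robin condition. You also correctly diagnose where the naive barrier comparison breaks down. But the claimed remedy, ``bootstrapping: \dots dyadically shrinking half-balls \dots a finite number of iterations,'' does not actually close the argument as stated, and I think it is a genuine gap rather than an omitted routine step. The mean-value inequality requires a \emph{volume} average $\int_{B_\delta^+(z_0)}y^a v$, while the preliminary estimate only controls the \emph{boundary} $L^1$-norm $\int_{\partial^0}v$; for a nonnegative $L_a$-subharmonic function there is no lower-Harnack mechanism turning the boundary $L^1$-smallness into a bound on the nearby volume integral, so one iteration step does not produce the ``refined control on $v$ near the equator'' that the next barrier comparison needs. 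Concretely, at each stage the supremum $K$ is still driven by $\sup v\sim V$ on the portion of $\partial^+B_r^+(z_0)$ with small $y$, and no iteration exponent improves.

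The clean way to make the barrier idea work is not to iterate but to augment the barrier. Besides $\Psi$, one needs a second explicit $L_a$-harmonic, $y$-even function that vanishes at $z_0=(x_0,0)$, has $\partial_y^a\equiv 0$ on $\partial^0$, is nonnegative on $\partial^0$, and is of order $1$ near the equator of $\partial^+B_r^+(z_0)$. The quadratic
\[
\Phi(x,y):=|x-x_0|^2-\tfrac{n}{1+a}\,y^2
\]
does precisely that ($L_a\Phi=0$ by a direct check, $\partial_y^a\Phi=0$, $\Phi(x,0)=|x-x_0|^2\ge 0$). With $S:=\sup_{\partial^+B_r^+(z_0)}v$, the combination $\tilde\Psi=K_1\Psi+K_2\Phi$ with $K_2=S/r^2$ and $K_1=(1+\tfrac{n}{1+a})S/r^{1-a}$ satisfies $\tilde\Psi\ge S\ge v$ on $\partial^+B_r^+(z_0)$ and $\partial_y^a\tilde\Psi\le M\tilde\Psi$ on $\partial^0$; the comparison principle then gives $v(z_0)\le\tilde\Psi(z_0)=K_1(1-a)/M\le C(n,a)S/(Mr^{1-a})$, and combining with $S\le C\int_{B_1^+}y^a v$ (the mean-value step you already have) finishes the proof. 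No iteration, and in fact no use of the preliminary $L^1$-estimate, is needed. By contrast, the paper does not prove the decay estimate from scratch at all: it cites \cite[Lemma 3.5]{TVZ 2} for the $1/M$ factor and only supplies the observation that $\partial_y^a v\ge 0$ makes the even extension $L_a$-subharmonic, so that Lemma~\ref{lem mean value inequality} converts the supremum on the right-hand side into the weighted $L^1$-integral. So your route, once repaired with the augmented barrier, would be a genuinely self-contained alternative; as written it leaves the decisive step unproven.
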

\begin{proof}
This is essentially \cite[Lemma 3.5]{TVZ 2}. We only need to note that, since
\[\partial_y^a v\geq 0   \ \ \mbox{on}\ \partial B_1^+,\]
the even extension of $v$ to $B_1$ is $L_a$-subharmonic (cf. \cite[Lemma 4.1]{C-S}). Then by Lemma \ref{lem sup bound},
\[\sup_{B_{2/3}^+}v\leq C(n)\int_{B_1^+}y^av.\qedhere\]
\end{proof}

\bigskip

\noindent
{\bf Acknowledgments:} The research of J. Wei is partially supported by NSERC of Canada.
Kelei Wang is supported by NSFC No. 11301522.

\bigskip

\end{document}